\documentclass[11pt]{lmcs}

\usepackage{double-data-macros}

\theoremstyle{definition}\newtheorem{con}[thm]{Construction}
\theoremstyle{definition}\newtheorem{dis}[thm]{Discussion}

\usepackage{hyperref}
\usepackage[capitalize,noabbrev,nameinlink]{cleveref}
\crefname{con}{Construction}{Construction}
\crefname{dis}{Discussion}{Discussion}
\makeatletter
\AddToHook{env/thm/begin}{\crefalias{thm}{theorem}}
\AddToHook{env/lem/begin}{\crefalias{thm}{lemma}}
\AddToHook{env/cor/begin}{\crefalias{thm}{corollary}}
\AddToHook{env/prop/begin}{\crefalias{thm}{proposition}}
\AddToHook{env/defi/begin}{\crefalias{thm}{definition}}
\AddToHook{env/rem/begin}{\crefalias{thm}{remark}}
\AddToHook{env/exa/begin}{\crefalias{thm}{example}}
\AddToHook{env/con/begin}{\crefalias{thm}{con}}
\AddToHook{env/dis/begin}{\crefalias{thm}{dis}}
\makeatother

\title{Quantification in Double-Categorical Database Schemas}
\author{Michael Lambert}
\address{Norwich University, 158 Harmon Drive, Northfield VT, 05663}
\email{michael.james.lambert@gmail.com, mlamber2@norwich.edu}

\begin{document}
\begin{abstract}
  Double-categorical database schemas are enriched with universal quantification 
  in the form of right adjoints to substitution. This allows phrasing of the 
  important query, \emph{relational division}. It is shown that such right 
  adjoints together with suitable tabulators interpret modal operators, provide 
  cartesian closed structure, and, when combined with global cocartesian 
  structure, interpret first-order predicate logic and thus description logic. 
  These structures are applied throughout to querying and optimization. It is 
  seen, for example, that Frobenius reciprocity and Beck-Chevalley hold in any 
  suitably structured double database schema and that these provide pushdown 
  optimization rules. Such optimization rules are thus provably error-free as a 
  property of the schema. Likewise, negation queries are derived from 
  cocartesian and local implication structure. 
\end{abstract}

\maketitle

\section{Introduction}

The double-categorical approach to database schematization and knowledge 
representation (KR) began in \cite{lambert2025} is in this paper extended by the 
introduction of quantification as a further operation on double ologs. Such 
double ologs are a synthesis of the functional \cite{Spivak2010,spivak2012} and 
relational \cite{patterson2017} approaches to data schemas within the single 
framework of a suitably structured cartesian equipment known as a `double 
category of relations' \cite{lambert2022}. A data instance on such a double olog 
is a cartesian double functor valued in relations. Standard relational database 
query operations such as select, filter and join are all native operations given 
by the equipment and cartesian structure of the given double olog; instances 
preserve all such structure and compute the query results directly in relations. 
This has the effect of bypassing the foregoing standard of lifting to induced 
external adjunctions between functor categories to perform functorial data 
migration \cite{Spivak2010}. Up until the present work, however, more advanced 
queries such as \emph{relational division} \cite{codd1972} were not known to be 
possible within this double-categorical framework.

To perform such queries, the present work studies the effect of asking for right 
adjoints to the substitution functors coming from the equipment structure in any 
double olog. These right adjoints are \emph{dependent products} or \emph{fibered 
products} in the language of \emph{fibrational semantics} 
\cite{lawvere1970, jacobs1999}. These dependent products have appeared in purely 
bicategorical contexts as \emph{Kan quantification} \cite{lawvere2002}. When 
combined in the present situation with suitable restrictions and tabulators they 
give (1) division operators, (2) modal operators, and (3) exponentials. Each of 
these, the queries they allow, and their interrelations with the standing 
modular laws, Frobenius reciprocity and Beck-Chevalley condition, are studied 
here in detail. The culminating result is that a double olog with dependent 
products, strong tabulators, and coproducts satisfying a global distributivity 
law is locally a model of first-order predicate logic enhanced with modal 
operators modelling possibility (liveness) and necessity (safety). Thus, such 
ologs also interpret \emph{description logic} \cite{baader2010} which forms the 
basis of OWL and web3. In this way, the present paper advances the project of 
seeing structured equipments as a models of logics over type theories 
\cite{jacobs1999}.

The theoretical narrative of this development is itself of interest. The 
external Beck-Chevalley condition is shown to hold in any `double category of 
relations' with strong tabulators (\cref{lemma:beck-chevalley}). This results 
from the theoretical advance of \cite{hoshino2025} showing that the condition of 
\emph{unit-purity} is automatically implied by the assumption of 
\emph{discreteness} \cite[Definition 2.1]{carboni1987} in any `double category 
of relations'. Formerly \cite{lambert2022}, unit-purity figured as a 
facilitating but \emph{ad hoc} assumption that is now seen to be redundant. 
Likewise, we show that Frobenius reciprocity holds in any `double category of 
relations' (\cref{prop:Frobenius-reciprocity}). This was asserted in 
\cite{lambert2022} and proved in appropriate references. It is seen here to be a 
consequence of the modular laws stemming from the fact that any `double category 
of relations' is compact closed with a trivial duality involution 
\cite[Theorem 2.4]{carboni1987}. Frobenius reciprocity is used to show that any 
double olog with dependent products and strong tabulators is locally cartesian 
closed in the sense that every hom category is cartesian closed 
(\cref{theo:locally-cartesian-closed}). The crucial result enabling this 
(\cref{lemma:identity-proarrow-exponentiable}) shows that identity proarrows are 
always exponentiable in any `double category of relations'. Essentially, the 
discreteness axiom forces the identity proarrows to act as very strict identity 
predicates akin to genuine diagonals. We thus recover formulas analogous to 
those of \cite[Theorem 10.5.4]{jacobs1999} without the need of elaborate 
comprehension structures. Nor do we require that the base category is cartesian 
closed. This establishes a direct connection between dependent (co)products, 
tabulators, and exponents. Finally, it is shown that global cocartesian 
structure with a global distributive law localizes to give local coproducts over 
which local products distribute. In this sense, a main result is that 
distributivity localizes (\cref{theorem:local-distributivity}). 

Several difficult queries in relational database theory are captured in this 
framework. The first is the relational division operation originating in 
\cite{codd1972}. The classical phrasing of this operation has a notoriously 
difficult syntax and is a major source of computational bottlenecking in 
implementations. Here it is handled by precomposing a well-chosen dependent 
product with a suitable restriction functor. The inevitability of this 
development is showcased by the first three sections 
(\cref{section:relational-division,section-ologs,section:varieties-of-quantification}) 
which have a discursive pacing and recount 
the narrative of the present discovery of the solution. Of course, inasmuch as 
right adjoints to substitution are already known to encode division (hence the 
very notion of a \emph{division allegory} \cite{freyd1990} and also 
\cite[\S 11]{lambert2022}, this solution is in hindsight probably the obvious 
one, but to our knowledge the connection has not been formalized for categorical 
database querying. Likewise, negation queries are captured using cartesian 
closed structure and local coproducts. These are combined with existing joins to 
do otherwise difficult compound queries with relative ease 
(\cref{section:cocartesian-negation}). Modal queries involving possibility and 
necessity operators are also introduced as applications of dependent products 
and tabulators (\cref{section:modality}). These bear a superficial resemblance 
to division queries but are indeed closer to liveness and safety properties 
\cite{lamport1977,alpern1985,alpern1987}. Disjunction queries are studied for 
the first time under the guise of \emph{local coproducts} which are induced from 
global cocartesian structure (\cref{section:cocartesian-negation}).

A major application of the present work is to query optimization rules. 
Certainly, join is generally regarded as the most costly of standard queries; 
and select is essentially free. This is borne out for us by the fact that join 
is typically a complicated restriction along diagonals and select is achieved by 
a simple projection from a factor \cite{lambert2025}. Filter and collapse are 
closer overall. Filter is executed by restricting along a value or subtype; 
whereas collapse is best thought of as \emph{extraction of metadata} or 
\emph{abstraction}, that is, basically, extension along a surjection. Inasmuch 
as the latter is something like passing to equivalence classes, or a taking a 
quotient, or a colimit, we regard it as generally more expensive than filter, 
but still less than join. We adopt a heuristic hierarchy of relative query 
expense: 
  \begin{equation*}\label{equation:query-expense}
    \text{select column } \lessapprox  \text{ filter by value } \lessapprox \text{ collapse/abstract } \lessapprox \text{ join/conjunction}.
  \end{equation*}
Relative to this hierarchy, we see both the Beck-Chevalley condition and 
Frobenius reciprocity as rewrite and optimization rules for certain compound 
queries. We show 
(\cref{remark:Beck-Chevalley-Optimization,example:beck-chevalley}) that the 
former is a \emph{filter-pushdown} optimization that takes a collapse followed 
by filter and rewrites it as a more efficient filter followed by a collapse. In 
our illustrating example, the more efficient query ends up being a filter 
followed by a select column. Frobenius reciprocity we show 
(\cref{remark:Frobenius-Optimization,example:frobenius}) is 
a \emph{join-pushdown} which rewrites a collapse following an expensive join/
conjunction as a join following a collapse. This does not eliminate the need of 
a join but the collapse/abstract first reduces the size of the dataset on which 
it is performed. The point of deriving Beck-Chevalley and Frobenius reciprocity 
in our ologs is that these rewrite and optimization rules are thus native 
features of the database schemas and preserved by the functorial and fibrational 
semantics of any data instance. Such optimization is thus in this sense 
baked-in. As a result, optimization type-checks automatically and is provably 
error-free without the need of \emph{ad hoc} or heuristic case-by-case rules.

This project began several years ago with the work leading up to 
\cite{lambert2025} where it was noticed that many of the examples of migration 
in \cite{Spivak2010} were queries that could be realized by pullbacks and 
projections. Double categories had already entered our thinking on KR as a 
result of the desirability of a general framework where genuinely relational 
aspects attained a first-class status. And in this context, it was realized that 
cartesian equipments carried exactly the derived structure to model natively 
select, filter and join as operations internal to a double olog generated by a 
presentation. Both ingredients, namely having products and being a fibration, 
are indispensable. Others such as the hypothesis 
of being a `double category of relations' or having tabulators are highly 
convenient inasmuch as (for example) the derived property of \emph{compactness} 
seems to solve a difficult bookkeeping problem introduced by the sidedness of 
relations. Of the two necessary ingredients, the equipment structure, that is, 
the fact that the source-target projection is a fibration, is the subtle and 
rich one. The use of fibrations to model databases is not exactly new 
(e.g. \cite{rosebrugh1992,islam1994}). By happy coincidence the power of a 
fibrational perspective was rediscovered in the present and foregoing work on 
streamlining the account of data querying. Genuinely novel is the introduction 
of the technical machinery of structured double categories presenting and 
formalizing ologs; and its combination with the idea that instancing via both 
double-categorical functorial and fibrational semantics amounts to database 
querying. This paper is an expos\'e on the consequences especially of the latter 
aspect.

To conclude this introduction, a summary of the main results of the paper is as 
follows:
  \begin{enumerate}
    \item Relational division is recast as a composite operation in a suitably structured double category using dependent products (\cref{example:recast-relational-division}).
    \item Beck-Chevalley and Frobenius reciprocity are realized as query optimization rewrite rules (\cref{remark:Beck-Chevalley-Optimization}, \cref{remark:Frobenius-Optimization}).
    \item Dependent products and strong tabulators are seen to give local cartesian closure and a formula for exponentials (\cref{theo:locally-cartesian-closed}).
    \item Further cocartesian structure allows negation queries (\cref{example:negation}).
    \item Distributivity localizes (\cref{theorem:local-distributivity}) without the assumption of cartesian closure or even the existence of dependent products.
    \item Any suitably structured double olog, viewed as an equipment, interprets first-order predicate logic, hence all the traditional operations of relational algebra (\cref{corollary:first-order-fibration}).
    \item What we call a \emph{FOML double olog} interprets also modal necessity and possibility operators and thus gives a semantics of \emph{description logic} (\cref{definition:interpret-description-logic}).
  \end{enumerate}
Double-categorical conventions are consistent with the previous 
\cite{lambert2024a,lambert2025}. Our \cite{lambert2022} on `double categories of 
relations' is taken for granted especially for the crucial notion of 
\emph{discreteness}. Likewise \cite{carboni1987} is an indispensable reference 
for the bicategorical description of \emph{compactness}. \cite{jacobs1999} is a 
guiding resource for fibrational interpretations of logics over type theories. 
As with our previous work on double ologs, examples of data instances are taken 
from a non-standard source. In this instance they are taken from the fantasy 
RPG, The Elder Scrolls V: Skyrim.

\tableofcontents

\part{Dependent Products \& Relational Division}

This first part of the paper is devoted entirely to setting up, explaining, and 
recasting in appropriate double-categorical machinery the queries we are 
interested in. The main motivating query is again \emph{relational division} 
introduced in \cite{codd1972} essentially as a \emph{check inventory against 
list} query. The first three sections 
(\cref{section:relational-division,section-ologs,section:varieties-of-quantification}) 
are a discussion-style narrative recounting the original syntax of relational 
division and showing in summary a form of the analysis that led to the 
introduction of dependent products and the formulation of the solution as 
presented in \cref{example:recast-relational-division}. Dependent coproducts and 
products are realized as species of \emph{Kan quantification} 
\cite{lawvere2002}. We study Beck-Chevalley in this connection as a rewrite and 
optimization rule (\cref{section:existential-queries}). On this basis, 
universally-quantified queries are introduced using dependent products in 
\cref{section:universal-queries} and leading to the formulation of relation 
division. ologs with this kind of structure are axiomatized as $\prod$-double 
ologs. Along the way, we survey 6 different kinds of queries 
(\cref{fig:quant-queries}), each of which is derived from quantification over a 
relation in some form. Two of these are modal queries: one possibility and the 
other necessity. The latter bears a superficial resemblance to division but is 
better captured using suitably structured tabulators (\cref{section:modality}).

\section{Relational Division}
\label{section:relational-division}
Relational division is a query introduced in \cite[\S 2]{codd1972} that behaves 
essentially as a universal quantifier over given specified values in the column 
of a table. First we illustrate this operation with a generic example. We work 
with binary relations $R \colon C\proto A$ which are of course equivalently 
monic arrows $\langle d,c\rangle \colon R\rightarrowtail C\times A$. Recall the 
notation from the reference
  \begin{equation*}
    r[A] = c(r) \text{ where } r\in R
  \end{equation*}
and likewise for $r[C] = d(r)$. Likewise, for any binary relation $T$, let 
$g_T(x) = \lbrace y\mid xTy\rbrace$, that is, the set of all $y$ in the target 
of $T$ related to $x$ under $T$. Call this the \textbf{$T$-orbit} of $x$. Now, 
for the example, consider the two tables:
  \begin{equation*}
    \begin{tabular}{| l | l | }
        \hline\multicolumn{2}{| c |}{\bf R}\\
        \hline {\bf C }&{\bf A}\\
        \hline 1 & a \\
        \hline 1 & b \\
        \hline 1 & c \\
        \hline 2 & a \\
        \hline 2 & b \\
        \hline 3 & b \\
        \hline
    \end{tabular} \qquad\qquad\qquad
    \begin{tabular}{| l | l | }
        \hline\multicolumn{2}{| c |}{\bf S}\\
        \hline {\bf B }&{\bf D}\\
        \hline a & 2 \\
        \hline b & 1 \\
        \hline b & 2 \\
        \hline a & 3 \\
        \hline
    \end{tabular}.
  \end{equation*}
Note that $A$ and $B$ have entries from the same set $\lbrace a,b,c\rbrace$. 
This is required to perform division. Now, in the notation of the reference, a 
division query is 
    \begin{equation*}
      R[A\div B]S = \lbrace r[C]\mid r\in R \text{ and } S[B]\subset g_R(r[C])\rbrace = \lbrace 1,2\rbrace.
    \end{equation*}
We unpack this carefully to illustrate the operation. The possible values of 
$r[C]$ are the distinct entries of the $C$-column of $R$, namely, $1$, $2$, and 
$3$. Likewise, $S[B] = \lbrace a,b\rbrace$ is the set of all values of the 
$B$-column of $S$. So, there are three sets of the form $g_R(r[C])$, only two of 
which contain $S[B]$, as in: 
    \begin{align*}
      S[B] &= \lbrace a,b\rbrace \nsubseteq \lbrace b\rbrace = g_R(3) \\
      S[B] &= \lbrace a,b\rbrace \subset \lbrace a,b,c\rbrace = g_R(1)\\
      S[B] &= \lbrace a,b\rbrace = \lbrace a,b\rbrace = g_R(2).
    \end{align*}
Hence the division query returns $\lbrace 1,2\rbrace$, exactly \emph{the set of 
all $C$-values whose orbit contains all the values of the $B$-column of $S$}. 
More prosaically, a $C$-value is selected by the query iff it is related to 
everything on the list of $B$-values. Thus, \emph{universal quantification} 
enters as an essential aspect of the query. Now, in the containment relations 
above, the universally quantified set does not need to completely describe the 
orbit of any given element satisfying the division query; that is, $1$ is 
related to $c$, but this is not a $B$-value. Likewise $3$ is related to 
something on the list, but not everything.

A more concrete example illustrating the same query is the following. Consider 
the two relational tables below. On the left, rows pair an apothecary with an 
ingredient sold there; on the right, rows pair an ingredient with an effect 
produced by the combination of that ingredient with another in a potion. 
    \[
    \begin{tabular}{| l | l | }
        \hline\multicolumn{2}{| c |}{\bf Inventory}\\
        \hline {\bf Apothecary }&{\bf Ingredient}\\
        \hline Arcadia's Cauldron & giant's toe \\
        \hline Arcadia's Cauldron & wheat \\
        \hline Arcadia's Cauldron & void salts \\
        \hline Elgrim's Elixers & giant's toe \\
        \hline Elgrim's Elixers & wheat \\
        \hline The White Phial & wheat \\
        \hline
    \end{tabular} \qquad\qquad
    \begin{tabular}{| l | l | }
        \hline\multicolumn{2}{| c |}{\bf Potion}\\
        \hline {\bf Ingredient }&{\bf Effect}\\
        \hline giant's toe & fortify health \\
        \hline wheat & fortify health \\
        \hline wheat & lingering damage magicka \\
        \hline giant's toe & damage stamina regen \\
        \hline
    \end{tabular}
  \]
Note that the second table does not list the other ingredients required to 
create the potion producing the effect. Our expertise is that any two such 
ingredients with the same effect will create the desired potion when combined. 
Now, suppose we want to create a fortify health potion but are to lazy to fight 
a giant to get his toe. We would also really prefer to one-stop-shop (maybe the 
merchant will give us a deal if we buy in bulk). The orbits are then the 
inventories:
    \begin{align*}
      g_{\text{Inventory}}(\text{Arcadia's}) &= \lbrace \text{giant's toe, wheat, void salts}\rbrace \\
      g_{\text{Inventory}}(\text{Elgrim's}) &= \lbrace \text{giant's toe, wheat}\rbrace \\
      g_{\text{Inventory}}(\text{White Phial}) &= \lbrace \text{wheat} \rbrace
    \end{align*}
The the set of values quantified over is our shopping list: $\lbrace 
\text{giant's toe, wheat}\rbrace$. In the notation above, the division query is 
then 
    \begin{align*}
      &\text{Inventory}[\text{Ingredient}\div \text{Ingredient}]\text{Potion} \\
      =\;&\lbrace \text{Apothecary} \mid \lbrace \text{giant's toe, wheat}\rbrace\subset g_\text{Inventory}(\text{Apothecary})\rbrace
    \end{align*}
which is just to say that we are looking for the proprietors who have all the 
ingredients on our shopping list. The result is of course $\lbrace 
\text{Arcadia's, Elgrims}\rbrace$. It does not matter that Arcadia's also has 
void salts; but it does matter that The White Phial only has wheat but no 
giant's toe.

\begin{rem}
  There is something awkward about these examples inasmuch as we would probably 
  more likely have tables better reflecting our alchemical expertise such as 
      \[
        \begin{tabular}{| l | l | }
            \hline\multicolumn{2}{| c |}{\bf Apothecary}\\
            \hline {\bf Name }&{\bf Ingredient}\\
            \hline Arcadia's Cauldron & giant's toe \\
            \hline Arcadia's Cauldron & wheat \\
            \hline Arcadia's Cauldron & void salts \\
            \hline Elgrim's Elixers & giant's toe \\
            \hline Elgrim's Elixers & wheat \\
            \hline The White Phial & wheat \\
            \hline
        \end{tabular} \qquad
        \begin{tabular}{| l | l | }
            \hline\multicolumn{2}{| c |}{\bf Potion}\\
            \hline {\bf Ingredient }&{\bf Effect}\\
            \hline giant's toe & fortify health \\
            \hline wheat & fortify health \\
            \hline wheat & lingering damage magicka \\
            \hline giant's toe & damage stamina regen \\
            \hline swamp fungal pod & restore health \\
            \hline void essence & restore health\\
            \hline
        \end{tabular}.
      \]
  However, division with quantification over the whole the Ingredient column in 
  the Potion table now produces the empty set since no merchant has all those 
  items on hand. And \cite[\S 2.4]{codd1972} recognizes this in the example 
  queries. Namely, example query \#8 to find suppliers supplying at least parts 
  12, 13 and 15 restricts the part \# values to only 12, 13 and 15 without 
  ranging over the whole column before executing the division query. This is 
  exactly a \emph{shopping list} kind of example. In other words, technically, 
  the division query is not set up to do selection from a sub-list of a column 
  on its own. A restriction to just the values of interest from the second table 
  is employed implicitly before doing the division query. In the present example 
  immediately above, to execute our query, we would filter the rows with the 
  value ``fortify health'' first and then do the division query with the 
  ingredients column. The flexibility of the division query as written is that 
  it can be applied to any two tables with columns having the same types of 
  values; the expense is that it might not return anything and may require 
  restrictions to produce results in intuitive cases.
\end{rem}

\section{Ologs for Division}
\label{section-ologs}

Having reviewed relational division, we now recast the situation from a 
double-categorical olog-perspective \cite{lambert2025}. The data is schematized 
by two basic relations
    \begin{equation*}
      \begin{tikzcd}
        {\fbox{Apothecary}} && {\fbox{Ingredient}}
        \arrow["{\text{Inventory}}"{inner sep=.8ex}, "\shortmid"{marking}, from=1-1, to=1-3]
      \end{tikzcd}\qquad\qquad
      \begin{tikzcd}
        {\fbox{Ingredient}} && {\fbox{Effect}}
        \arrow["{\text{Potion}}"{inner sep=.8ex}, "\shortmid"{marking}, from=1-1, to=1-3]
      \end{tikzcd}.
    \end{equation*}
Suppose that these relations are instanced by the two tables at the end of the 
previous section, thought of as living in $\Rel$. Now, the olog generated will 
be a cartesian equipment. So, given an individual as a global element of a type, 
say, $\text{Arcadia's}\colon 1\to \fbox{Name}$, we can form the restriction
    \begin{equation*}
      \begin{tikzcd}
        1 &&& {\fbox{Ingredient}} \\
        {\fbox{Apothecary}} &&& {\fbox{Ingredient}}
        \arrow[""{name=0, anchor=center, inner sep=0}, "{\text{Arcadia's Inventory}}"{inner sep=.8ex}, "\shortmid"{marking}, from=1-1, to=1-4]
        \arrow["{\text{Arcadia's}}"', from=1-1, to=2-1]
        \arrow[from=1-4, to=2-4]
        \arrow[""{name=1, anchor=center, inner sep=0}, "{\text{Inventory}}"'{inner sep=.8ex}, "\shortmid"{marking}, from=2-1, to=2-4]
        \arrow["{\text{res}}"{description}, draw=none, from=0, to=1]
      \end{tikzcd}
    \end{equation*}
which in the instance is precisely the named merchant's inventory. This would 
play the role of $g_R(x)$ in the previous section. Likewise, our ingredient list 
for a fortify health potion is schematized by a restriction:
    \begin{equation*}
      \begin{tikzcd}
        {\fbox{Ingredient}} &&& 1 \\
        {\fbox{Ingredient}} &&& {\fbox{Effect}}
        \arrow[""{name=0, anchor=center, inner sep=0}, "{\text{fortify health potion}}"{inner sep=.8ex}, "\shortmid"{marking}, from=1-1, to=1-4]
        \arrow[from=1-1, to=2-1]
        \arrow["{\text{fortify health}}", from=1-4, to=2-4]
        \arrow[""{name=1, anchor=center, inner sep=0}, "{\text{Potion}}"'{inner sep=.8ex}, "\shortmid"{marking}, from=2-1, to=2-4]
        \arrow["{\text{res}}"{description}, draw=none, from=0, to=1]
      \end{tikzcd}
    \end{equation*}
These are both examples of \emph{filter queries} discussed in 
\cite{lambert2025}. We are trying ultimately to compare the two restrictions, to 
see if the ingredients for the fortify health potion are in the various 
inventories. In the examples above, the composites 
    \begin{equation*}
      \begin{tikzcd}
        1 &&& {\fbox{Ingredient}} &&& 1
        \arrow["{\text{Arcadia's Inventory}}"{inner sep=.8ex}, "\shortmid"{marking}, from=1-1, to=1-4]
        \arrow["{\text{fortify health potion}}"{inner sep=.8ex}, "\shortmid"{marking}, from=1-4, to=1-7]
      \end{tikzcd}
    \end{equation*}
and 
    \begin{equation*}
      \begin{tikzcd}
        1 &&& {\fbox{Ingredient}} \\
        {\fbox{Apothecary}} &&& {\fbox{Ingredient}}
        \arrow[""{name=0, anchor=center, inner sep=0}, "{\text{Arcadia's Inventory}}"{inner sep=.8ex}, "\shortmid"{marking}, from=1-1, to=1-4]
        \arrow["{\text{Arcadia's}}"', from=1-1, to=2-1]
        \arrow[from=1-4, to=2-4]
        \arrow[""{name=1, anchor=center, inner sep=0}, "{\text{Inventory}}"'{inner sep=.8ex}, "\shortmid"{marking}, from=2-1, to=2-4]
        \arrow["{\text{res}}"{description}, draw=none, from=0, to=1]
      \end{tikzcd}
    \end{equation*}
are natural to study but neither returns what we want. In the first case, the 
$\Rel$-composite will return whichever ingredient Arcadia's has on the list; in 
the second case, the $\Rel$-composite returns whichever merchants have an 
ingredient on the list. This is a result of the fact that existential 
quantification over the instance of the middle type is built into composition in 
$\Rel$. Neither of course is right: the first considers only one merchant; the 
second only whether a merchant has an ingredient on the list.

The better way is to view our shopping list as a subtype of the type of 
ingredients:
    \begin{equation*}
      \begin{tikzcd}
        {\fbox{\text{ingredient for fortify health potion}}} \\
        {\fbox{Ingredient}}
        \arrow[tail, from=1-1, to=2-1]
      \end{tikzcd}
    \end{equation*}
This need not necessarily be monic. But monic arrows are perfectly well 
axiomatizable in our framework. Perhaps we use the notation to indicate we are 
thinking of it as a subtype obtained by comprehension \cite[\S 9.3]{jacobs1999}. 
Nor does it matter how the subtype comes about at this stage; it could have been 
obtained as a tabulator, or simply required as a primitive in the setup of the 
olog. Notice that this in any case is a genuine subtype that is not necessarily 
a global element (as was the case with all the examples in \cite{lambert2025}.

What matters is comparing this subtype with the inventories of all the merchants 
simultaneously and finding a way to quantify over those inventories. The 
individual inventories are implicit in the instancing of the Inventory relation, 
so we study the corner  
    \begin{equation*}
      \begin{tikzcd}
        && {\fbox{\text{ingredient for fortify health potion}}} \\
        {\fbox{Name}} && {\fbox{Ingredient}}
        \arrow[tail, from=1-3, to=2-3]
        \arrow["{\text{Apothecary}}"'{inner sep=.8ex}, "\shortmid"{marking}, from=2-1, to=2-3]
      \end{tikzcd}.
    \end{equation*}
It is tempting just to restrict with an identity morphism on the left. This 
results in a cartesian cell 
    \begin{equation*}
      \begin{tikzcd}
        {\fbox{Name}} &&&&&& {\fbox{\text{ingredient for fortify health potion}}} \\
        {\fbox{Name}} &&&&&& {\fbox{Ingredient}}
        \arrow[""{name=0, anchor=center, inner sep=0}, "{\text{Merchant with fortify health ingredient}}"{inner sep=.8ex}, "\shortmid"{marking}, from=1-1, to=1-7]
        \arrow[equals, from=1-1, to=2-1]
        \arrow[tail, from=1-7, to=2-7]
        \arrow[""{name=1, anchor=center, inner sep=0}, "{\text{Apothecary}}"'{inner sep=.8ex}, "\shortmid"{marking}, from=2-1, to=2-7]
        \arrow["{\text{res}}"{description}, draw=none, from=0, to=1]
      \end{tikzcd}
    \end{equation*}
which filters the table for those rows having one of the two ingredients on the 
list:
    \begin{equation*}
      \begin{tabular}{| l | l | }
        \hline\multicolumn{2}{| c |}{\bf Apothecary}\\
        \hline {\bf Name }&{\bf Ingredient}\\
        \hline Arcadia's Cauldron & giant's toe \\
        \hline Arcadia's Cauldron & wheat \\
        \hline Elgrim's Elixers & giant's toe \\
        \hline Elgrim's Elixers & wheat \\
        \hline The White Phial & wheat \\
        \hline
      \end{tabular}.
    \end{equation*}
Notice that this gets rid of the unnecessary entry with the value ``void salts'' 
but it does not establish our query. Another operation with which our olog comes 
equipped is extension. In $\Rel$ this acts as an image, and formalizes a select 
query \cite{lambert2025}. In the present instance we might try an extension
    \begin{equation*}
      \begin{tikzcd}
        {\fbox{Name}} &&&&&& {\fbox{\text{ingredient for fortify health potion}}} \\
        {\fbox{Name}} &&&&&& 1
        \arrow[""{name=0, anchor=center, inner sep=0}, "{\text{Merchant with fortify health ingredient}}"{inner sep=.8ex}, "\shortmid"{marking}, from=1-1, to=1-7]
        \arrow[equals, from=1-1, to=2-1]
        \arrow["{!}", from=1-7, to=2-7]
        \arrow[""{name=1, anchor=center, inner sep=0}, "{\text{Merchant with fortify health ingredient}}"'{inner sep=.8ex}, "\shortmid"{marking}, from=2-1, to=2-7]
        \arrow["{\text{ext}}"{description}, draw=none, from=0, to=1]
      \end{tikzcd}
    \end{equation*}
which in the data instance is effectively just a list of the three merchants, 
since each has at least one ingredient. But if we went to the White Phial, we 
would be out of luck. This is owing to the fact that in $\Rel$ the extension is 
an image computed via \emph{existential quantification}. So, the composite query 
here of restriction along the subtype, followed by extension has performed an 
\emph{existential query}, returning each merchant whose inventory contains an 
ingredient on the list.

Now, this is clearly not the result we want, but it is not a dead-end, for it 
suggests a path forward. Extensions are left adjoints to restriction. Inasmuch 
as extension is thought of as existential quantification and restriction is 
thought of as substitution, we have an analogy of adjunctions 
    \begin{equation*}
      \text{ext}\dashv \text{res} \qquad\leftrightarrow \qquad \exists \dashv (-)^*.
    \end{equation*}
Of course in nice logical situations substitution also has a right adjoint, 
which is \emph{universal quantification}. This is of course exactly the kind of 
operation we are seeking to formalize. What we want then is to complete the 
adjoint analogy
    \begin{equation*}
      \text{ext}\dashv \text{res} \dashv \,\,??? \qquad\leftrightarrow \qquad \exists \dashv (-)^*\dashv \forall
    \end{equation*}
in the theater of ologs and of double categories generally. Something like this 
has been studied in the foundational paper \cite{shulman2008} under the guise of 
\emph{coextension}. It was mentioned there, but not studied in-depth due to 
being not as well behaved as the other two operations. And indeed this is not 
what we envision in this paper. For coextensions are had by asking for the 
equipment $\dbl{D}_1\to\dbl{D}_0\times\dbl{D}_0$ to be also a \emph{cofibration} 
which is too strong for what we have in mind. Rather, we take the approach of 
\emph{fibrational semantics} \cite{jacobs1999} and will ask merely that our 
equipments $\dbl{D}_1\to\dbl{D}_0\times\dbl{D}_0$ have right adjoints to 
restriction functors and that these satisfy a Beck-Chevalley condition. This is 
exactly to require \emph{fibered products}. We shall see in more detail how this 
recovers exactly the queries of interest below in 
\cref{section:universal-queries}. For this we first review quantification in 
more detail.

\section{Quantification and Modality}
\label{section:varieties-of-quantification}

It is well-known that quantification provides left and right adjoints to 
substitution \cite{lawvere2006,maclane1992}. For an ordinary set function 
$f\colon X\to Y$, there is the usual adjoint situation
  \begin{equation*}
    \begin{tikzcd}
      PX && PY
      \arrow["{\forall_f}", shift left=3, from=1-1, to=1-3]
      \arrow["{\exists_f}"', shift right=3, from=1-1, to=1-3]
      \arrow["{f^*}"{description}, from=1-3, to=1-1]
    \end{tikzcd}\qquad\qquad \exists_f\dashv f^*\dashv\forall_f
  \end{equation*}
where
  \begin{equation*}
    \exists_f(S) = \lbrace y\in Y\mid f(x) = y \text{ for some } x\in S\rbrace
  \end{equation*}
  \begin{equation*}
    \forall_f(S) = \lbrace y\in Y\mid \text{ for all } x\in X \text{ if } f(x) = y \text{ then } x\in S\rbrace.
  \end{equation*}
Ordinary quantification is the special case taking 
$f = \pi\colon X\times Y\to Y$, the usual projection to the second factor of the 
cartesian product. In this case, the quantified subsets are 
$R\subset X\times Y$, that is, binary relations. That $x$ and $y$ are 
$R$-related is written $R(x,y)$ or $x\leq_Ry$. Quantification is then:
  \begin{equation*}
    \exists_\pi(R) = \lbrace y\in Y\mid R(x,y) \text{ for some } x\in X\rbrace
  \end{equation*}
  \begin{equation*}
    \forall_\pi(R) = \lbrace y\in Y\mid R(x,y) \text{ for all } x\in X\rbrace.
  \end{equation*}
Typically, the left variable is bound. Note also in particular that if $Y=1$, we 
have closed formulas, that is, truth values.  

Now, a relation $R\colon X\proto Y$ might also be thought of as set of pairs of 
entities and observations or attribute values, or when $X= Y$ as a 
non-deterministic system and $x\leq_Rx'$ as representing a causal or temporal 
transition or statement of accessibility. Now, the set of all $x\in X$ for which 
some subsequent state or attribute value satisfies a proposition $S\subset Y$ is 
written 
  \begin{equation*} \label{eqn:diamond-definition}
    \diamondsuit S = \lbrace x\mid S(y) \text{ for some } x\leq_R y \rbrace.
  \end{equation*}
This is exactly the modal possibility operator familiar from the topological 
semantics given by the Alexandrov Topology. Likewise there is the set of those 
elements of $X$, all of whose subsequent states or attribute values satisfy 
$S\subset Y$, written 
  \begin{equation*}
    \Box S = \lbrace x\mid S(y) \text{ for all } x\leq_R y\rbrace.
  \end{equation*}
This is modal necessity in the same topological semantics. Thus, in the former 
case, an element of $\diamondsuit S$ is an element of $X$ for which $S$ is 
possible, in the sense that it is true of some later accessible state. In the 
latter case, an element of $\Box S$ is an element of $X$ for which $S$ is 
necessarily true, in the sense that any subsequent accessible state satisfies 
$S$. Note that this data of a relation $R$ and proposition $S\subset Y$ is 
exactly the set up of our olog:
  \begin{equation*}
      \begin{tikzcd}
        && {\fbox{\text{ingredient for fortify health potion}}} \\
        {\fbox{Name}} && {\fbox{Ingredient}}
        \arrow[tail, from=1-3, to=2-3]
        \arrow["{\text{Apothecary}}"'{inner sep=.8ex}, "\shortmid"{marking}, from=2-1, to=2-3]
      \end{tikzcd}.
    \end{equation*}
Recall too the table instancing the bottom relation:
  \begin{equation*}
    \begin{tabular}{| l | l | }
        \hline\multicolumn{2}{| c |}{\bf Apothecary}\\
        \hline {\bf Name }&{\bf Ingredient}\\
        \hline Arcadia's Cauldron & giant's toe \\
        \hline Arcadia's Cauldron & wheat \\
        \hline Arcadia's Cauldron & void salts \\
        \hline Elgrim's Elixers & giant's toe \\
        \hline Elgrim's Elixers & wheat \\
        \hline The White Phial & wheat \\
        \hline
    \end{tabular}.
  \end{equation*}
Our list $S$ again has just wheat and giant's toe. Directly from the operations 
above we have six queries summarized in \cref{fig:quant-queries}. In the figure, 
$R$ denotes the relation and $R^\dagger$ is the reverse relation. The variables 
$x$ and $y$ range over names and over ingredients, respectively. 
\begin{figure}[H]
\begin{center}
\renewcommand{\arraystretch}{1.5}

\newlength{\boxheight}
\setlength{\boxheight}{3cm}

\tcbset{
  querybox/.style={
    boxrule=0.5mm, 
    sharp corners, 
    height=\boxheight,
    colback=white,
    colframe=black,
    coltitle=white,
    colbacktitle=black,
    halign=center, 
    valign=center  
  }
}

\begin{tabularx}{\linewidth}{X X}

\begin{tcolorbox}[querybox, title=Result of Existential Query]
  $\begin{aligned}
    \exists_\pi R &= \lbrace y\mid R(x,y) \text{ for some } x\in X\rbrace \\
                  &= \lbrace \text{ingredients someone has}\rbrace \\
                  &= \lbrace\text{wheat}, \text{giant's toe},\text{void salts}\rbrace
  \end{aligned}$
\end{tcolorbox}
&
\begin{tcolorbox}[querybox, title=Result of Universal Query]
  $\begin{aligned}
    \forall_\pi R &= \lbrace y\mid R(x,y) \text{ for all } x\in X\rbrace \\
                  &= \lbrace \text{ingredients everyone has}\rbrace \\
                  &= \lbrace\text{wheat}\rbrace
  \end{aligned}$
\end{tcolorbox}
\\[2ex]

\begin{tcolorbox}[querybox, title=Result of Reverse Existential Query]
  $\begin{aligned}
    \exists_\pi R^\dagger &= \lbrace x \mid R(x,y) \text{ for some } y\in Y\rbrace \\
                  &= \lbrace \text{those with an $R$-ingredient}\rbrace \\
                  &= \lbrace\text{Arcadia's}, \text{Elgrim's}, \text{Phial}\rbrace
  \end{aligned}$
\end{tcolorbox}
&
\begin{tcolorbox}[querybox, title=Result of Reverse Universal Query]
  $\begin{aligned}
    \forall_\pi R^\dagger &= \lbrace x\mid R(x,y) \text{ for all } y\in Y\rbrace \\
                  &= \lbrace \text{those with all $R$-ingredients}\rbrace \\
                  &= \lbrace\text{Arcadia's}\rbrace
  \end{aligned}$
\end{tcolorbox}
\\[2ex]

\begin{tcolorbox}[querybox, title=Result of Possibility Query (Liveness)]
  $\begin{aligned}
  \diamondsuit S &=\lbrace x\mid S(y) \text{ for some } x\leq_R y \rbrace \\
  &= \lbrace x \mid x \text{ has an ingredient on } S \rbrace \\ 
  &= \lbrace \text{Arcadia's}, \text{Elgrim's}, \text{Phial}\rbrace
  \end{aligned}$
\end{tcolorbox}
&
\begin{tcolorbox}[querybox, title=Results of Necessity Query (Safety)]
  $\begin{aligned}
  \Box S &= \lbrace x\mid S(y) \text{ for all } x\leq_R y\rbrace\\ 
  &= \lbrace x \mid x \text{ has only ingredients on } S \rbrace \\ 
  &= \lbrace \text{Elgrim's}, \text{Phial}\rbrace
  \end{aligned}$
\end{tcolorbox}

\end{tabularx}
\end{center}
\caption{Results of six quantification queries.}
\label{fig:quant-queries}
\end{figure}
Notice some curiosities. The first is that $R$ is given. Thus, the existential 
queries are just the select column queries returning either all the merchants or 
all the ingredients specified by $R$. These are already studied in 
\cite{lambert2025}. The universal queries are new and more interesting, but 
neither returns our list query since $S$ does not even enter into the 
construction. The modal queries as well are new. Although they seemingly 
superficial resembling the division query, they are actually quite different. 
The possibility query returns the same as the existential query only as an 
accident of $R$. If The White Phial was paired with something other than wheat, 
for example, the results would be different. Note as well that the necessity 
query is not the list query we are looking for. It returns the set of those 
merchants having only the ingredients on our list. This is a query with utility, 
but as far as our shopping list goes it does not matter whether the returned 
merchant has other items. In fact, suppose that one closer has our ingredients 
but also a lot of others. We would naturally prefer to go there. The possibility 
construction is very close to presenting a \emph{liveness property} whereas the 
necessity construction is essentially phrasing a \emph{safety property} 
\cite{lamport1977}, \cite{alpern1985}, \cite{alpern1987}. The former means 
\emph{something good eventually happens} (that is, eventually values are on the 
list $S$); and the latter means \emph{nothing bad happens} (i.e. values never 
leave the list $S$). Combined with negation 
(\cref{section:cocartesian-negation}) we have something that looks more 
explicitly like safety: an execution never enters the defined \emph{bad region} 
in that it always remains in its complement.

It will turn out that the list query is given by a pair of operations, one of 
which is universal quantification, applied in sequence. And so, to enrich the 
data schema to handle this, the goal now is to formulate all of the above 
operations purely double-categorically and axiomatize them in our generated 
ologs. Ultimately, we will allow the data-instancing and double-categorical 
functorial semantics to perform the desired queries.

\section{Existential Queries}
\label{section:existential-queries}

Before casting universal queries in double-categorical formalism, we look more 
closely at existential-type queries. The main consideration is the possibility 
query in \cref{fig:quant-queries}. In this section we work in the setting of a 
generic cartesian equipment $\dbl D$ and gradually add further assumptions as 
needed. The discussion of existential operations in terms of \emph{Kan 
quantification} (\cref{discussion:kan-quantification}) motivates the eventual 
adoption of right adjoints to substitution as that machinery capturing division.

\begin{con}
  The set-up we are entertaining is that of having a subobject or subtype 
  $\phi\colon s\rightarrowtail y$ thought of as specifying in the associated 
  data some subcollection or sublist of items in the ambient list instancing 
  $y$. We have already seen that in $\Rel$, merely composing has the effect of 
  an existential query. This can be formalized $\dbl{D}$ by first viewing $s$ as 
  an extension and then composition with $r$ on the left as in 
    \begin{equation*}
      \begin{tikzcd}
        & s & s \\
        x & y & 1
        \arrow[""{name=0, anchor=center, inner sep=0}, "{\id_S}"{inner sep=.8ex}, "\shortmid"{marking}, from=1-2, to=1-3]
        \arrow["\phi"', tail, from=1-2, to=2-2]
        \arrow["{!}", from=1-3, to=2-3]
        \arrow["r"'{inner sep=.8ex}, "\shortmid"{marking}, from=2-1, to=2-2]
        \arrow[""{name=1, anchor=center, inner sep=0}, "{\hat\phi}"'{inner sep=.8ex}, "\shortmid"{marking}, from=2-2, to=2-3]
        \arrow["{\text{ext}}"{description}, draw=none, from=0, to=1]
      \end{tikzcd}.
    \end{equation*}
  Here we are using the notation $\hat\phi$ to stand for the codomain of the 
  extension above which is a composite of the conjoint of $\phi$ with the 
  companion of the unique $s\to 1$. This is essentially converting a subtype or 
  term to a one-sided proposition in the logic and semantically it is reversing 
  the operation of taking a tabulator. In $\Rel$, the composite of proarrows on 
  the bottom is precisely modal possibility, $\diamondsuit_r\phi$. An equivalent 
  $\Rel$-construction is to form a pullback along $\phi$ and then take an image 
  along $s\to 1$. This is formalized in any cartesian equipment as 
    \begin{equation*}
      \begin{tikzcd}
        x & s \\
        x & y
        \arrow[""{name=0, anchor=center, inner sep=0}, "{r\odot \phi^*}"{inner sep=.8ex}, "\shortmid"{marking}, from=1-1, to=1-2]
        \arrow[equals, from=1-1, to=2-1]
        \arrow["\phi", tail, from=1-2, to=2-2]
        \arrow[""{name=1, anchor=center, inner sep=0}, "r"'{inner sep=.8ex}, "\shortmid"{marking}, from=2-1, to=2-2]
        \arrow["{\text{res}}"{description}, draw=none, from=0, to=1]
      \end{tikzcd} \qquad\leadsto\qquad 
      \begin{tikzcd}
        x & s \\
        x & 1
        \arrow[""{name=0, anchor=center, inner sep=0}, "{r\odot \phi^*}"{inner sep=.8ex}, "\shortmid"{marking}, from=1-1, to=1-2]
        \arrow[equals, from=1-1, to=2-1]
        \arrow["{!}", from=1-2, to=2-2]
        \arrow[""{name=1, anchor=center, inner sep=0}, "{\diamondsuit_r \phi}"'{inner sep=.8ex}, "\shortmid"{marking}, from=2-1, to=2-2]
        \arrow["{\text{ext}}"{description}, draw=none, from=0, to=1]
      \end{tikzcd}.
    \end{equation*}
  In $\Rel$, the top of the restriction on the right is exactly the set of pairs 
  $(x,y)$ where $x\leq_Ry$ under $R$. The image then binds over $y$ via 
  existential quantification. Now, in $\dbl{D}$ there is a globular comparison 
  cell $\diamondsuit_r \phi \Rightarrow r\odot \hat\phi$ since 
  $\diamondsuit_r \phi$ is constructed as an extension: 
    \begin{equation} \label{equation:globular-cell}
      \begin{tikzcd}
        x && s \\
        x & s & s \\
        x & y & 1
        \arrow[""{name=0, anchor=center, inner sep=0}, "{r\odot \phi^*}"{inner sep=.8ex}, "\shortmid"{marking}, from=1-1, to=1-3]
        \arrow[equals, from=1-1, to=2-1]
        \arrow[from=1-3, to=2-3]
        \arrow[""{name=1, anchor=center, inner sep=0}, "{r\odot \phi^*}"{inner sep=.8ex}, "\shortmid"{marking}, from=2-1, to=2-2]
        \arrow["1"', equals, from=2-1, to=3-1]
        \arrow[""{name=2, anchor=center, inner sep=0}, "\id"{inner sep=.8ex}, "\shortmid"{marking}, from=2-2, to=2-3]
        \arrow["\phi", from=2-2, to=3-2]
        \arrow["{!}", from=2-3, to=3-3]
        \arrow[""{name=3, anchor=center, inner sep=0}, "r"'{inner sep=.8ex}, "\shortmid"{marking}, from=3-1, to=3-2]
        \arrow[""{name=4, anchor=center, inner sep=0}, "{\hat\phi}"'{inner sep=.8ex}, "\shortmid"{marking}, from=3-2, to=3-3]
        \arrow["\cong"{description}, draw=none, from=0, to=2-2]
        \arrow["{\text{res}}"{description}, draw=none, from=1, to=3]
        \arrow["{\text{ext}}"{description}, draw=none, from=2, to=4]
      \end{tikzcd} \qquad = \qquad 
      \begin{tikzcd}
        x & s \\
        x & 1 \\
        x & 1
        \arrow[""{name=0, anchor=center, inner sep=0}, "{r\odot \phi^*}"{inner sep=.8ex}, "\shortmid"{marking}, from=1-1, to=1-2]
        \arrow[equals, from=1-1, to=2-1]
        \arrow["{!}", from=1-2, to=2-2]
        \arrow[""{name=1, anchor=center, inner sep=0}, "{\diamondsuit_r \phi}"'{inner sep=.8ex}, "\shortmid"{marking}, from=2-1, to=2-2]
        \arrow[equals, from=2-1, to=3-1]
        \arrow[equals, from=2-2, to=3-2]
        \arrow[""{name=2, anchor=center, inner sep=0}, "{r\odot \hat\phi}"'{inner sep=.8ex}, "\shortmid"{marking}, from=3-1, to=3-2]
        \arrow["{\text{ext}}"{description}, draw=none, from=0, to=1]
        \arrow["{\exists\,!}"{description, pos=0.6}, draw=none, from=1, to=2]
      \end{tikzcd}.
    \end{equation}
  Now, it can be seen directly that this is an invertible cell when $\dbl{D}$ is 
  $\Mat(\cat{V})$ for suitable monoidal $\cat V$. This covers relations and 
  spans. Now, in fact they agree in any equipment.
\end{con}

\begin{prop} \label{prop:possibility-rewrite-rule-1}
  The globular cell $\diamondsuit_r \phi \Rightarrow r\odot \hat\phi$ induced in \cref{equation:globular-cell} is an isomorphism in any equipment. Accordingly, the two potential definitions of possibility agree.
\end{prop}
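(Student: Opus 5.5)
In any equipment, restrictions and extensions are computed by composing with companions and conjoints. Write $f_!$ for the companion and $f^*$ for the conjoint of a tight arrow $f$. Then the restriction of a proarrow $p\colon a\proto b$ along $f\colon c\to a$ and $g\colon d\to b$ is $f_!\odot p\odot g^*$. Dually, the extension of $q\colon c\proto d$ along $f\colon c\to a$ and $g\colon d\to b$ is $f^*\odot q\odot g_!$, with the opcartesian cell induced by the units of the companion/conjoint adjunctions. The plan is to compute both sides of the comparison explicitly in these terms and check that the induced globular cell is the resulting canonical isomorphism.

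First I would compute the two sides.
\begin{itemize}
  \item The restriction of $r$ along $1_x$ and $\phi$ is $r\odot\phi^*$, exactly as written in the construction.
  \item Extending this along $1_x$ and $!\colon s\to 1$ gives
  \[
    \diamondsuit_r\phi \;\cong\; r\odot\phi^*\odot {!}_!,
  \]
  using that the extension along an identity on the left contributes only the identity proarrow.
  \item On the other side, $\hat\phi$ is by definition the extension of $\id_s$ along $\phi$ and $!$, so $\hat\phi\cong \phi^*\odot\id_s\odot{!}_!\cong\phi^*\odot{!}_!$.
  \item Hence $r\odot\hat\phi\cong r\odot\phi^*\odot{!}_!$.
\end{itemize}
By associativity and unit coherence of horizontal composition, both proarrows are canonically isomorphic to the same triple composite.

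The remaining point is that the specific cell in \cref{equation:globular-cell} coincides with this canonical isomorphism, not just that some isomorphism exists. I would argue this through universal properties rather than by computation. The left pasting there is the restriction cell $r\odot\phi^*\Rightarrow r$ placed horizontally beside the extension cell $\id_s\Rightarrow\hat\phi$, precomposed with a unitor. Horizontal composition of an arbitrary cell with an opcartesian cell is opcartesian whenever the other cell's outer boundary is trivial; this is a standard equipment fact, since in the companion/conjoint description it is whiskering with $\phi^*\odot{!}_!$. Here the cell paired with the extension cell is the restriction cell, but its left boundary is $1_x$. So I would instead factor the composite: first the extension cell of $r\odot\phi^*$ along $1_x$ and $!$, then a globular cell. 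The key step is to show the composite cell from $r\odot\phi^*$ to $r\odot\hat\phi$ is itself opcartesian over $(1_x,!)$. If it is, it is an extension of $r\odot\phi^*$. By uniqueness of extensions up to unique globular isomorphism, the unique factoring cell $\diamondsuit_r\phi\Rightarrow r\odot\hat\phi$ is then invertible.

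To verify that opcartesianness I would use the fact that a cell with boundary $(f,g)$ is opcartesian iff its transpose, obtained by composing with the companion/conjoint units and counits, is a globular isomorphism onto $f^*\odot(\text{top})\odot g_!$. Substituting, the transpose of the pasted cell reduces via the triangle identities for $\phi_!\dashv\phi^*$ to the associativity isomorphism $(r\odot\phi^*)\odot{!}_!\cong r\odot(\phi^*\odot{!}_!)$. The main obstacle I expect is this bookkeeping: checking that the restriction cell of $r$ along $\phi$, pasted beside the extension cell, transposes exactly to that associator, with the counit of $\phi_!\dashv\phi^*$ from the restriction cancelling the unit from the extension by a triangle identity. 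Once that is done, the final clause, that the two definitions of possibility agree, follows immediately.
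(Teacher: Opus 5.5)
Your proposal is correct and follows the paper's route. The paper also reduces the claim to showing that the pasted left-hand side of \cref{equation:globular-cell} is itself an extension cell of $r\odot\phi^*$ along $(1_x,!)$, using the companion/conjoint description of restrictions and extensions; you carry out the bookkeeping it leaves as an exercise, ending at the associator $(r\odot\phi^*)\odot !_!\cong r\odot(\phi^*\odot !_!)$.

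Two small corrections. First, the cancellation you need is the conjoint equation stating that the horizontal composite of the two binding cells of $\phi^*$ is the identity on $\phi^*$; one binding cell comes from the restriction cell and the other from the extension cell of $\id_s$. It is not a triangle identity of the loose adjunction $\phi_!\dashv\phi^*$, and no companion of $\phi$ appears. Second, your aside that composing an arbitrary cell beside an opcartesian one stays opcartesian is false as stated, since it fails for a non-invertible globular cell; you rightly never rely on it.
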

\begin{proof}
  It suffices to show that the two sides of \cref{equation:globular-cell} 
  compute the same extension. But this is an exercise is rearranging the 
  left-hand side via the properties of restrictions and extensions and their 
  construction via companions and conjoints \cite[\S 4]{shulman2008}.
\end{proof}

So, although the constructions are thus essentially the same, we make a choice 
as to which is basic. We insist on starting with the subtype $\phi\colon s\to y$ 
consistent with the set-up of \cref{section-ologs}. And so in either case there 
are two operations to perform the query. There is a slight semantic advantage to 
the latter. For in the case of $\Span$, although the appropriate element 
$x\in X$ is implicit in the composite as $d(r) = x$, our preference is to keep 
the data as a triple $((x,a),r) = (x,a,r)$ as in the second way of computing the 
possibility operator. This way all the information is recorded in the apex of 
the span and the two legs could potentially be forgotten.

\begin{defi}[Possibility] \label{def:possibility-operator}
  Let $\phi\colon s\rightarrowtail y$ denote any monic arrow in the equipment $\dbl{D}$. The proposition \textbf{possibly} $\phi$ is then formed as an restriction followed by an extension:
    \begin{equation*}
      \begin{tikzcd}
        x & s \\
        x & y
        \arrow[""{name=0, anchor=center, inner sep=0}, "{r\odot \phi^*}"{inner sep=.8ex}, "\shortmid"{marking}, from=1-1, to=1-2]
        \arrow[equals, from=1-1, to=2-1]
        \arrow["\phi", tail, from=1-2, to=2-2]
        \arrow[""{name=1, anchor=center, inner sep=0}, "r"'{inner sep=.8ex}, "\shortmid"{marking}, from=2-1, to=2-2]
        \arrow["{\text{res}}"{description}, draw=none, from=0, to=1]
      \end{tikzcd} \qquad\leadsto\qquad 
      \begin{tikzcd}
        x & s \\
        x & 1
        \arrow[""{name=0, anchor=center, inner sep=0}, "{r\odot \phi^*}"{inner sep=.8ex}, "\shortmid"{marking}, from=1-1, to=1-2]
        \arrow[equals, from=1-1, to=2-1]
        \arrow["{!}", from=1-2, to=2-2]
        \arrow[""{name=1, anchor=center, inner sep=0}, "{\diamondsuit_r \phi}"'{inner sep=.8ex}, "\shortmid"{marking}, from=2-1, to=2-2]
        \arrow["{\text{ext}}"{description}, draw=none, from=0, to=1]
      \end{tikzcd}.
    \end{equation*}
  If $r$ is understood, write just `$\diamondsuit\phi$' for this proposition.
\end{defi}

Note that then in $\Rel$-semantics, this operation actually recovers all the 
existential queries in \cref{fig:quant-queries} by combining with restrictions 
or reverse operators; that is, each is either an image or a composition. But the 
content of \cref{prop:possibility-rewrite-rule-1} is that images and 
compositions have essentially the same effect in suitable cases in general 
equipments. We close the section now with a high-level description of the common 
setting embracing all of these cases. This leads to our abstract formulation of 
universal queries in \cref{section:universal-queries} below.

\begin{dis}[Quantification Motivation] \label{discussion:kan-quantification}
  All the instances of existential quantification discussed above arise from the 
  \emph{fibrational semantics} of the equipment $\dbl{D}$. We have the general 
  picture:
    \begin{equation} \label{eqn:left-adjoint-to-restriction}
      \coprod_{f,g}\colon\dbl{D}(a,b)\rightleftarrows\dbl{D}(x,y)\colon (f,g)^*\qquad\qquad\coprod_{f,g}\dashv (f,g)^*
    \end{equation}
  for any arrows $f\colon a\to x$ and $g\colon b\to y$ of $\dbl{D}$. The hom 
  categories are the fibers of source-target fibration 
  $\langle\src,\tgt\rangle\colon\dbl{D}_1\to\dbl{D}_0\times\dbl{D}_0$ over the 
  pairs of objects $(a,b)$ and $(x,y)$. And the coproduct denotes the left 
  adjoint to restriction along $f$ and $g$. All instances of existential 
  quantification in \cref{fig:quant-queries} are recoverable in this framework. 
  On the one hand, for $\phi\colon s\to y$ we have the functors
    \begin{equation} \label{eqn:possibility-via-doctrines}
      \begin{tikzcd}
        {\dbl{D}(x,y)} & {\dbl{D}(x,s)} & {\dbl{D}(x,1)}
        \arrow["{(x,\phi)^*}", from=1-1, to=1-2]
        \arrow["{\coprod_{x,!}}", from=1-2, to=1-3]
      \end{tikzcd}\qquad\qquad \coprod_{x,!}(x,s)^*(r) = \diamondsuit_r \phi
    \end{equation}
  as in \cref{def:possibility-operator}. On the other hand, ordinary existential 
  quantification is just an application of a left adjoint without the subtyping 
  and restriction:
    \begin{equation*}
      \begin{tikzcd}
        {\dbl{D}(x,y)} & {\dbl{D}(x,1)}
        \arrow["{\coprod_{x,!}}", from=1-1, to=1-2]
      \end{tikzcd} \qquad\qquad \coprod_{x,!}(r) =: \exists y . r
    \end{equation*}
  as has already been observed in the case of $\Rel$. A reverse operator thus 
  covers the last existential query in \cref{fig:quant-queries}. Now, in each 
  case, the left adjoints used are special cases of \emph{Kan quantification} 
  \cite[\S 3]{lawvere2002}. These are also fragments of the bicategorical closed 
  structure discussed in \cite[\S 5]{shulman2008}. That is, these are species of 
  one-sided adjunctions 
    \begin{equation} \label{eqn:one-sided-left-adjoint-to-restriction}
      \coprod_{x,h}\colon\dbl{D}(x,z)\rightleftarrows\dbl{D}(x,w)\colon (x,h)^*\qquad\coprod_{x,h}\dashv (x,h)^*
    \end{equation}
  for various choices of $h\colon z\to w$. Owning to the nature of the 
  computation of restrictions and extensions via companions and conjoints 
  \cite[Theorem 4.1]{shulman2008}, the two functors in the adjunction 
  immediately above are simply: \emph{postcompose with a companion or conjoint} 
  as in 
    \begin{equation} \label{eqn:one-sided-left-adjoint-to-restriction2}
      -\odot h_!\colon\dbl{D}(x,z)\rightleftarrows\dbl{D}(x,w)\colon -\odot h^* \qquad -\odot h_! \dashv -\odot h^*.
    \end{equation}
  Retaining the coproduct notation for the left adjoint, the adjunction thus 
  expresses an existential quantification introduction rule. That is, it 
  presents a bijection
    \begin{equation*}
      \begin{tikzcd}
        x & z \\
        x & w
        \arrow[""{name=0, anchor=center, inner sep=0}, "{\coprod_{x,h}(m)}"{inner sep=.8ex}, "\shortmid"{marking}, from=1-1, to=1-2]
        \arrow[equals, from=1-1, to=2-1]
        \arrow["h", from=1-2, to=2-2]
        \arrow[""{name=1, anchor=center, inner sep=0}, "n"'{inner sep=.8ex}, "\shortmid"{marking}, from=2-1, to=2-2]
        \arrow[between={0.3}{0.7}, Rightarrow, from=0, to=1]
      \end{tikzcd} \quad\leftrightarrow\quad 
      \begin{tikzcd}
        x && z \\
        x & w & z
        \arrow[""{name=0, anchor=center, inner sep=0}, "m"{inner sep=.8ex}, "\shortmid"{marking}, from=1-1, to=1-3]
        \arrow[equals, from=1-1, to=2-1]
        \arrow[equals, from=1-3, to=2-3]
        \arrow["n"'{inner sep=.8ex}, "\shortmid"{marking}, from=2-1, to=2-2]
        \arrow["{h^*}"'{inner sep=.8ex}, "\shortmid"{marking}, from=2-2, to=2-3]
        \arrow[between={0.4}{0.8}, Rightarrow, from=0, to=2-2]
      \end{tikzcd}.
    \end{equation*}
  Specializing to the locally posetal case and where $h$ is the unique $y\to 1$ 
  as above and writing the traditional `$\exists$' for the left adjoint, the 
  bijection expressing the universal property is thus precisely the usual 
  existential two-way rule:
    \begin{equation*}
      \begin{tikzcd}
        x & y \\
        x & 1
        \arrow[""{name=0, anchor=center, inner sep=0}, "{\exists_y(m)}"{inner sep=.8ex}, "\shortmid"{marking}, from=1-1, to=1-2]
        \arrow[equals, from=1-1, to=2-1]
        \arrow["{!}", from=1-2, to=2-2]
        \arrow[""{name=1, anchor=center, inner sep=0}, "n"'{inner sep=.8ex}, "\shortmid"{marking}, from=2-1, to=2-2]
        \arrow["\leq"{description}, draw=none, from=0, to=1]
      \end{tikzcd} \quad\leftrightarrow\quad
      \begin{tikzcd}
        x && y \\
        x & 1 & y
        \arrow[""{name=0, anchor=center, inner sep=0}, "m"{inner sep=.8ex}, "\shortmid"{marking}, from=1-1, to=1-3]
        \arrow[equals, from=1-1, to=2-1]
        \arrow[equals, from=1-3, to=2-3]
        \arrow["n"'{inner sep=.8ex}, "\shortmid"{marking}, from=2-1, to=2-2]
        \arrow["{y^*}"'{inner sep=.8ex}, "\shortmid"{marking}, from=2-2, to=2-3]
        \arrow["\leq"{description, pos=0.6}, draw=none, from=0, to=2-2]
      \end{tikzcd}\qquad \qquad
      \inferrule{\exists_y(m)\leq n}{m\leq n(y^*)}
    \end{equation*}
  where we have written $y^*$ for the conjoint of the unique $y\to 1$. This 
  existential two-way rule motivates the introduction of dependent products as 
  right adjoints to substitution in order to capture the implicit universal 
  quantification in relational division queries.
\end{dis}

\section{Universal Queries}
\label{section:universal-queries}

Now we turn to universal quantification starting from fibrational semantics and 
Kan quantification as framed in \cref{discussion:kan-quantification}. That the 
restriction functor $(f,g)^*\colon \dbl{D}(x,y)\to\dbl{D}(a,b)$ has a right 
adjoint is the statement that there exists a functor 
  \begin{equation*}
    \prod_{f,g}\colon \dbl{D}(a,b)\to\dbl{D}(x,y)
  \end{equation*}
and natural isomorphisms
  \begin{equation*}
    \dbl{D}(a,b)(f_!mg^*,s)\cong \dbl{D}(x,y)(m,\prod_{f,g}(s))
  \end{equation*}
establishing natural families of bijections of globular cells 
    \begin{equation*}
      \inferrule{f_!mg^*\Rightarrow s}{m\Rightarrow \prod_{f,g}(s)}
    \end{equation*}
given in each direction by application of the appropriate functor and then 
composition with a unit or counit as the case may be. Notice again that this 
specializes to a bidirectional universal quantification rule. The elementary 
characterization of this right adjoint as a universal in this context is the 
following.
\begin{prop}
  The restriction functor $(f,g)^*\colon \dbl{D}(x,y)\to\dbl{D}(a,b)$ has a right adjoint if, and only if, for each $s\colon a\proto b$ there is a specified proarrow $\forall_{f,g}(s)$ together with a globular cell $\epsilon_s$ of the form 
    \begin{equation*}
      \begin{tikzcd}
        a && b \\
        x && y
        \arrow[""{name=0, anchor=center, inner sep=0}, "{f_!(\forall_{f,g}(s))g^*}"{inner sep=.8ex}, "\shortmid"{marking}, from=1-1, to=1-3]
        \arrow["f"', from=1-1, to=2-1]
        \arrow["g", from=1-3, to=2-3]
        \arrow[""{name=1, anchor=center, inner sep=0}, "{\forall_{f,g}(s)}"'{inner sep=.8ex}, "\shortmid"{marking}, from=2-1, to=2-3]
        \arrow["{\text{res}}"{description}, draw=none, from=0, to=1]
      \end{tikzcd} \qquad \leadsto \qquad
      \begin{tikzcd}
        a && b \\
        a && b
        \arrow[""{name=0, anchor=center, inner sep=0}, "{f_!(\forall_{f,g}(s))g^*}"{inner sep=.8ex}, "\shortmid"{marking}, from=1-1, to=1-3]
        \arrow[equals, from=1-1, to=2-1]
        \arrow[equals, from=1-3, to=2-3]
        \arrow[""{name=1, anchor=center, inner sep=0}, "s"'{inner sep=.8ex}, "\shortmid"{marking}, from=2-1, to=2-3]
        \arrow["{\epsilon_s}"{description}, draw=none, from=0, to=1]
      \end{tikzcd}
    \end{equation*}
  such that for any other globular cell from a restriction along $f$ and $g$, say, of the form 
    \begin{equation*}
      \begin{tikzcd}
        a && b \\
        x && y
        \arrow[""{name=0, anchor=center, inner sep=0}, "{f_!wg^*}"{inner sep=.8ex}, "\shortmid"{marking}, from=1-1, to=1-3]
        \arrow["f"', from=1-1, to=2-1]
        \arrow["g", from=1-3, to=2-3]
        \arrow[""{name=1, anchor=center, inner sep=0}, "w"'{inner sep=.8ex}, "\shortmid"{marking}, from=2-1, to=2-3]
        \arrow["{\text{res}}"{description}, draw=none, from=0, to=1]
      \end{tikzcd} \qquad\leadsto\qquad
      \begin{tikzcd}
        a && b \\
        a && b
        \arrow[""{name=0, anchor=center, inner sep=0}, "{f_!wg^*}"{inner sep=.8ex}, "\shortmid"{marking}, from=1-1, to=1-3]
        \arrow[equals, from=1-1, to=2-1]
        \arrow[equals, from=1-3, to=2-3]
        \arrow[""{name=1, anchor=center, inner sep=0}, "s"'{inner sep=.8ex}, "\shortmid"{marking}, from=2-1, to=2-3]
        \arrow["\xi"{description}, draw=none, from=0, to=1]
      \end{tikzcd}
    \end{equation*}
  there is a unique cell $\hat \xi$ for which $h$ factors through $\epsilon_s$ as in 
    \begin{equation*}
      \begin{tikzcd}
        a && b \\
        a && b \\
        a && b
        \arrow[""{name=0, anchor=center, inner sep=0}, "{f_!wg^*}"{inner sep=.8ex}, "\shortmid"{marking}, from=1-1, to=1-3]
        \arrow[equals, from=1-1, to=2-1]
        \arrow[equals, from=1-3, to=2-3]
        \arrow[""{name=1, anchor=center, inner sep=0}, "{f_!(\forall_{f,g}(s))g^*}"{inner sep=.8ex}, "\shortmid"{marking}, from=2-1, to=2-3]
        \arrow[equals, from=2-1, to=3-1]
        \arrow[equals, from=2-3, to=3-3]
        \arrow[""{name=2, anchor=center, inner sep=0}, "s"'{inner sep=.8ex}, "\shortmid"{marking}, from=3-1, to=3-3]
        \arrow["{\hat\xi}"{description, pos=0.4}, draw=none, from=0, to=1]
        \arrow["{\epsilon_s}"{description}, draw=none, from=1, to=2]
      \end{tikzcd}  
      \qquad = \qquad
      \begin{tikzcd}
        a & b \\
        \\
        a & b
        \arrow[""{name=0, anchor=center, inner sep=0}, "{f_!wg^*}"{inner sep=.8ex}, "\shortmid"{marking}, from=1-1, to=1-2]
        \arrow[equals, from=1-1, to=3-1]
        \arrow[equals, from=1-2, to=3-2]
        \arrow[""{name=1, anchor=center, inner sep=0}, "s"'{inner sep=.8ex}, "\shortmid"{marking}, from=3-1, to=3-2]
        \arrow["\xi"{description}, draw=none, from=0, to=1]
      \end{tikzcd}
    \end{equation*}
\end{prop}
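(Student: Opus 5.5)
This is the standard characterization of a right adjoint via a terminal object in each comma category (equivalently, a couniversal arrow), as in Mac Lane's Theorem IV.1.2, specialized to the functor $(f,g)^*=f_!(-)g^*$ between hom categories. The only equipment-specific input is that restriction is a functor $\dbl{D}(x,y)\to\dbl{D}(a,b)$. By \cite[Theorem 4.1]{shulman2008} it acts on proarrows by $w\mapsto f_!wg^*$ and on globular cells by whiskering with the companion and conjoint. Globular cells $\xi\colon f_!wg^*\Rightarrow s$ are therefore exactly morphisms $(f,g)^*(w)\to s$ in $\dbl{D}(a,b)$. I read the hatted cell $\hat\xi$ as the whiskering $f_!\hat\xi' g^*$ of a unique globular cell $\hat\xi'\colon w\Rightarrow\forall_{f,g}(s)$ in $\dbl{D}(x,y)$; this is how the statement must be read for it to say anything. (The ``$h$'' in the statement should read $\xi$.)

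For the forward direction, suppose $\prod_{f,g}\dashv$-style adjunction $(f,g)^*\dashv\prod_{f,g}$ is given. Set $\forall_{f,g}(s):=\prod_{f,g}(s)$ and let $\epsilon_s$ be the counit at $s$, which is a globular cell $f_!\prod_{f,g}(s)g^*\Rightarrow s$. For any $\xi\colon f_!wg^*\Rightarrow s$, take $\hat\xi'$ to be its transpose $\prod_{f,g}(\xi)\circ\eta_w$. The triangle identity gives $\epsilon_s\circ f_!\hat\xi' g^*=\xi$. Uniqueness holds because the transpose bijection $\dbl{D}(x,y)(w,\prod s)\cong\dbl{D}(a,b)(f_!wg^*,s)$ is precisely $\theta\mapsto\epsilon_s\circ(f,g)^*\theta$.

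For the converse, the chosen pairs $(\forall_{f,g}(s),\epsilon_s)$ are terminal objects of the comma categories $(f,g)^*\downarrow s$. Define $\forall_{f,g}$ on a globular cell $\alpha\colon s\Rightarrow s'$ as the unique factorization of $\alpha\circ\epsilon_s$ through $\epsilon_{s'}$. Uniqueness of factorizations then gives functoriality and makes $\epsilon$ natural. The assignment $\theta\mapsto\epsilon_s\circ f_!\theta g^*$ is a bijection by hypothesis, and it is natural in $w$ by functoriality of $(f,g)^*$ and in $s$ by construction. This yields the adjunction.

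I expect no real obstacle. The only point that needs care is the identification of $f_!(-)g^*$ with the restriction functor, so that whiskering a globular cell by $f_!$ and $g^*$ agrees with the induced map between restrictions. That agreement follows from the universal property of cartesian cells, since both candidate maps factor the same composite cell through the restriction cell of the target.
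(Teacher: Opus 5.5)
Your proposal is correct and follows the paper's proof, which simply invokes Mac Lane's characterization of an adjunction by its counit as a universal arrow (Theorem IV.1.2(iv)) and notes that naturality of $\epsilon_s$ is automatic; you have written out both directions of that argument explicitly. Your reading of $\hat\xi$ as the whiskering $f_!\hat\xi' g^*$ of a unique cell $\hat\xi'\colon w\Rightarrow\forall_{f,g}(s)$, with ``$h$'' read as $\xi$, is the reading under which that theorem applies.
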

\begin{proof}
  This is just the characterization of an adjunction in terms of the counit as a 
  universal \cite[\S IV.1, Theorem 2(iv)]{maclane1998}. Note that naturality of 
  $\epsilon_s$ in $s$ follows automatically.
\end{proof}

\begin{rem}
  There is a crucial conceptual and technical difference here in the fibrational 
  semantics \emph{vis-\'a-vis} the prior case of existential quantification. 
  This is that universal quantification is not neatly expressible via loose 
  composition with a companion or conjoint; nor is there some obvious third type 
  of proarrow $f_*$ associated to a morphism $f$ with its own accompanying 
  base-change cells and universal properties that would do this. For although 
  equipments $\dbl{D}$ involve both a fibration and opfibration 
  $\langle \src,\tgt\rangle\colon \dbl{D}_1\to\dbl{D}_0\times\dbl{D}_0$, this 
  same functor is not usually also a cofibration \cite{shulman2008}. This would 
  the structure required to provide some kind of \emph{corestriction} as a right 
  adjoint to restriction. Universal quantification thus must appear under some 
  different guise. Our approach of course has been that of fibrational 
  semantics: adjoints to substitution are the basic notion; that in one case 
  they happen to be computed by companions or conjoints is a nice feature of 
  equipments, but evidently does not carry over the others. Now, our axiomatized 
  notion is the following, based on \cite[\S1.9]{jacobs1999}. This is a 
  geometric/fibrational analogue of a \emph{quantification hyperdoctrine} 
  \cite{lawvere2006}, \cite{lawvere1970} instantiated here for double 
  categories. As defined in \cite{lambert2025}, a double olog is a small and 
  finitely-presented `double category of relations' \cite{lambert2022}. This is 
  in particular a locally posetal cartesian equipment.
\end{rem}

\begin{defi} \label{def:prod-double-olog}
  A $\prod$-\textbf{double olog} is a double olog where each substitution 
  functor has a right adjoint satisfying Beck-Chevalley.
\end{defi}

\begin{rem}
  Of course Beck-Chevalley holds automatically if the mere existence of 
  dependent products is assumed \emph{and} dependent coproducts are assumed to 
  satisfy Beck-Chevalley. This is by the usual mate calculus 
  \cite[\S 1.9]{jacobs1999}. In particular, a `double category of relations' 
  with strong tabulators and dependent products is a $\prod$-double olog 
  (\cref{lemma:beck-chevalley}). Certainly $\Rel$ has such adjoints since they 
  are inherited from $\Set$ and computed in the manner described in the 
  introduction. Beck-Chevalley follows or can be verified directly. Thus, given 
  an appropriate additional \emph{reverse} operator on proarrows, we have an 
  axiomatization of the two universal queries of \cref{fig:quant-queries} given 
  by application of the appropriate right adjoint.
\end{rem}

Our main application of universal quantification is, however, the list-query 
described \cref{section:relational-division} where we look for those proprietors 
having all the items on our shopping list. This can be done by a composition of 
a restriction followed by a universal quantifier right adjoint. Start with a 
given proarrow $r\colon x\proto y$ and a subtype $\phi\colon s\to y$ 
representing our list. Consider then the composite functor 
  \begin{equation} \label{eqn:list-via-doctrines}
    \begin{tikzcd}
      {\dbl{D}(x,y)} & {\dbl{D}(x,s)} & {\dbl{D}(x,1)}
      \arrow["{(x,\phi)^*}", from=1-1, to=1-2]
      \arrow["{\prod_{x,!}}", from=1-2, to=1-3]
    \end{tikzcd}
  \end{equation}
assuming that such right adjoints exist. Note that this is a direct analogue of 
the composite \cref{eqn:possibility-via-doctrines} which produced the modal 
query retrieving exactly those proprietors with some item on the list $S$. In 
the case of relations, this has the following effect.

\begin{exa} \label{example:recast-relational-division}
  Specializing \cref{eqn:list-via-doctrines} to $\Rel$, recall that the 
  restriction $R\odot \phi^*$ is the set of those pairs $(x,y)$ such that 
  $x\leq_R y$ and $y\in S$. Now, the map $X\times S\to X\times 1$ is up to 
  bijection between codomains the same as the first projection $X\times S\to X$. 
  So, using the computation of $\forall_{\pi_X}$ in 
  \cref{section:varieties-of-quantification}, we have 
    \begin{align*}
      \forall_{\pi_X}(R\odot \phi^*) &= \lbrace x\in X\mid \text{ for all } (x',y)\in X\times S \, (\pi(x',y) = x \text{ implies } (x',y)\in R\odot \phi^*)\rbrace \\
      &= \lbrace x\in X\mid \text{ for all } (x,y)\in X\times S \, (x,y)\in R\odot \phi^*\rbrace \\ 
      &= \lbrace x\in X\mid \text{ for all } y \in S \, (x,y)\in R\odot \phi^*\rbrace \\
      &= \lbrace x\in X\mid \text{ for all } y \in S \, (x\leq_R y)\rbrace
    \end{align*}
  That is, the result of the application of the composite of functors to $R$ is 
  the set of those $X$-elements $x$ for which everything in $S$ is $R$-related 
  to $x$. This is precisely to say that if $S$ is a list of ingredients and $X$ 
  is our list of proprietors, then the query returns the list of those who have 
  everything on the list (but could have other items).
\end{exa}

\begin{defi} \label{def:data-instance}
  A \emph{data instance} for a $\prod$-double olog $\dbl D$ is a double functor 
  $I\colon \dbl D\to\Rel$ that is a morphism of fibrations. In particular, $I$ 
  preserves substitution and its adjoints.
\end{defi}

\begin{rem}
  Although this construction thus achieves the desired effect and in this sense 
  the objective of the paper too, it leaves a gap in the narrative inasmuch as 
  it is not a modal query like the existential query relative to $S$ that it 
  mimics. The explanation is the relative flexibility of the existential 
  operator. That is, existential validity asks just that \emph{something} 
  satisfy the quantified formula, relation or proposition. This is a relatively 
  weak requirement. And so the result of the possibility query in 
  \cref{fig:quant-queries} is just the existential query $\exists$ applied to 
  $R\odot \phi^*$. Universal queries, by their nature surveying all entities 
  under the scope of the quantifier, are more stringent. Thus, 
  $\forall(R\odot \phi^*)$ and $\Box S$ produce dramatically different results: 
  the first case that of the shops with all ingredients on the list; and second 
  those with only ingredients on the list. That is, the first case quantifies 
  over list items and checks against inventory; the second quantifies over 
  inventory and checks against list items. The difficulty consists in the fact 
  that what needs to be returned in each case is the list of proprietors. The 
  built-in for $\forall$ as a right adjoint must match variances without the 
  mixing this would demand in one or the other case. That is, the comprehension 
  of $\forall$ must always match the codomain typing of operator as a right 
  adjoint. The object $R\odot \phi^*$ must be formed to compare shops with out 
  list items. But just, for example, switching the projection or reversing the 
  relation results in the codomain typing of $\forall$ to $S$-objects which will 
  return ingredients, not shops in the query. To recover this operator we turn 
  to tabulators in \cref{section:modality}.
\end{rem}

\section{Beck-Chevalley Condition}
\label{section:Beck-Chevalley}

As in \cite[\S 5.2]{aleiferi2018}, the (internal) Beck-Chevalley condition for 
an equipment $\dbl D$ says that given any pullback square on the left, the 
composite on the right is an isomorphism:
  \begin{equation*}
    \begin{tikzcd}
      x & y \\
      z & w
      \arrow["f", from=1-1, to=1-2]
      \arrow["h"', from=1-1, to=2-1]
      \arrow["\lrcorner"{anchor=center, pos=0.125}, draw=none, from=1-1, to=2-2]
      \arrow["k", from=1-2, to=2-2]
      \arrow["g"', from=2-1, to=2-2]
    \end{tikzcd}\qquad\qquad\qquad
    \begin{tikzcd}
      z & x & x & y \\
      z & z & y & y \\
      z & w & w & y
      \arrow[""{name=0, anchor=center, inner sep=0}, "{h^*}"{inner sep=.8ex}, "\shortmid"{marking}, from=1-1, to=1-2]
      \arrow[equals, from=1-1, to=2-1]
      \arrow["\id"{inner sep=.8ex}, "\shortmid"{marking}, from=1-2, to=1-3]
      \arrow["h", from=1-2, to=2-2]
      \arrow[""{name=1, anchor=center, inner sep=0}, "{f_!}"{inner sep=.8ex}, "\shortmid"{marking}, from=1-3, to=1-4]
      \arrow["f"', from=1-3, to=2-3]
      \arrow[equals, from=1-4, to=2-4]
      \arrow[""{name=2, anchor=center, inner sep=0}, "\shortmid"{marking}, from=2-1, to=2-2]
      \arrow[equals, from=2-1, to=3-1]
      \arrow["g", from=2-2, to=3-2]
      \arrow[""{name=3, anchor=center, inner sep=0}, "\shortmid"{marking}, from=2-3, to=2-4]
      \arrow["k"', from=2-3, to=3-3]
      \arrow[equals, from=2-4, to=3-4]
      \arrow[""{name=4, anchor=center, inner sep=0}, "{g_!}"'{inner sep=.8ex}, "\shortmid"{marking}, from=3-1, to=3-2]
      \arrow["\id"'{inner sep=.8ex}, "\shortmid"{marking}, from=3-2, to=3-3]
      \arrow[""{name=5, anchor=center, inner sep=0}, "{k^*}"'{inner sep=.8ex}, "\shortmid"{marking}, from=3-3, to=3-4]
      \arrow["\epsilon"{description}, draw=none, from=0, to=2]
      \arrow["\rho"{description}, draw=none, from=1, to=3]
      \arrow["\eta"{description}, draw=none, from=2, to=4]
      \arrow["\delta"{description}, draw=none, from=3, to=5]
    \end{tikzcd}.
  \end{equation*}
The cells labeled with Greek letters are the canonical base change cells coming 
with companions and conjoints. The main result of this section is that internal 
Beck-Chevalley implies the usual (external) Beck-Chevalley condition for the 
left adjoints to restriction in the context of fibrations 
\cite[\S 1.9]{jacobs1999}. As an upshot, this implies that any `double category 
of relations' with \emph{strong tabulators} (\cref{def:strong-tabulator}) 
satisfies external Beck-Chevalley automatically.

\begin{lem} \label{lemma:beck-chevalley}
  If $\dbl D$ is a cartesian equipment satisfying internal Beck-Chevalley, then 
  the source-target fibration 
  $\langle\src,\tgt\rangle\colon\dbl{D}_1\to\dbl{D}_0\times\dbl{D}_0$ satisfies 
  the usual Beck-Chevalley condition for the left adjoints to restriction along 
  pullback squares. In particular, any `double category of relations' with 
  strong tabulators satisfies the usual external Beck-Chevalley condition.
\end{lem}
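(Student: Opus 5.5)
We plan to use the explicit description of the left adjoints to restriction recalled in \cref{discussion:kan-quantification}. By \cite[Theorem 4.1]{shulman2008}, restriction along a pair $(f,g)$ with $f\colon a\to x$, $g\colon b\to y$ is $(f,g)^*(m)\cong f_!\odot m\odot g^*$. Its left adjoint is therefore $\coprod_{f,g}(s)\cong f^*\odot s\odot g_!$, with unit and counit built from the base change cells $\epsilon,\eta,\rho,\delta$ of companions and conjoints.

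Now fix a pullback square in $\dbl{D}_0\times\dbl{D}_0$. Since limits in a product are componentwise, this is a pair of pullback squares in $\dbl{D}_0$, say $h'\colon a'\to a$, $f'\colon a'\to z$ over $f\colon a\to x$, $u\colon z\to x$, and similarly $k',g'$ over $g,v$ on the second coordinate. The external Beck--Chevalley condition asks that the canonical mate
\[
\coprod_{f',g'}(h',k')^* \Rightarrow (u,v)^*\coprod_{f,g}
\]
be invertible. Substituting the formulas above, the two sides become
\[
f'^*\odot h'_!\odot s\odot k'^*\odot g'_! \quad\text{and}\quad u_!\odot f^*\odot s\odot g_!\odot v^*.
\]
We would then check, by a routine mate computation using the triangle identities for companions and conjoints, that the mate is the horizontal composite of three cells. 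The first is the internal Beck--Chevalley comparison $f'^*\odot h'_!\Rightarrow u_!\odot f^*$ for the first square. The second is the identity on $s$. The third is the internal Beck--Chevalley comparison $k'^*\odot g'_!\Rightarrow g_!\odot v^*$ for the second square, read with the roles of companion and conjoint appropriately transposed (equivalently, the square reflected). Since internal Beck--Chevalley makes the outer two cells isomorphisms, and horizontal composition preserves isomorphisms, the mate is invertible.

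\textbf{Main obstacle.} The hard step is identifying the abstract mate with this horizontal composite. This requires careful bookkeeping of how the unit of $\coprod\dashv(-)^*$ decomposes into companion/conjoint units placed on either side of $s$. For the right-hand factor we also need the internal condition in its mirrored form; we would derive this by applying the hypothesis to the transposed pullback square, using that $\dbl{D}_0$ pullbacks are symmetric. Cartesianness is not really needed for this first part beyond ensuring the relevant pullbacks exist.

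For the final clause, we invoke the paper's discussion preceding the lemma. By \cite{hoshino2025}, discreteness implies unit-purity in a `double category of relations'. With strong tabulators (\cref{def:strong-tabulator}), \cite{lambert2022} then yields internal Beck--Chevalley: the composite $h^*\odot f_!$ is the proarrow tabulated by the pullback, and it agrees with $g_!\odot k^*$ via the tabulator/discreteness correspondence between spans and proarrows. The first part of the lemma then applies.
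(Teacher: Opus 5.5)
Your proposal is correct and follows the paper's proof. In the first part you reduce the external mate to internal Beck--Chevalley using $(f,g)^*\cong f_!\odot-\odot g^*$ and $\coprod_{f,g}\cong f^*\odot-\odot g_!$; splitting the square into its two coordinate pullbacks, with the second one transposed, just spells out what the paper calls ``a matter of the definition''. In the second part you combine \cite{hoshino2025} (discreteness implies unit-purity) with the fact that unit-pure equipments with strong tabulators satisfy internal Beck--Chevalley.

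The paper cites \cite[Lemma 5.2.3]{aleiferi2018} for that last fact, not \cite{lambert2022}. The mechanism there is that unit-purity makes the pullback the tabulator of $g_!\odot k^*$, and strength then gives $g_!\odot k^*\cong h^*\odot f_!$. Also, cartesianness does not supply pullbacks: the condition only quantifies over pullbacks that exist, and in the second part they come from the tabulators.
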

\begin{proof}
  The statement of the result is that the canonical cell 
    \begin{equation*}
      \coprod_{f,g}(h,k)^*(m)\Rightarrow (u,v)^*\coprod_{p,q}(m)
    \end{equation*}
  arising from an appropriate pullback square in $\dbl D_0\times\dbl D_0$ is an 
  iso. But this is just a matter of the definition of the construction of 
  restrictions and extensions using companions and conjoints and the assumed 
  internal Beck-Chevalley condition summarized above. The last statement is a 
  consequence of \cite[Lemma 5.2.3]{aleiferi2018} which shows that any unit-pure 
  equipment with strong tabulators has pullback satisfying internal 
  Beck-Chevalley. The key assumption for this argument is \emph{unit-purity}. 
  But this is shown in \cite[Lemma 4.1.9]{hoshino2025} always to hold in any 
  double category where every object is discrete.
\end{proof}

\begin{rem}[Optimization Rewrite Rule] \label{remark:Beck-Chevalley-Optimization}
  External Beck-Chevalley as above is a rewrite rule for compound queries. This 
  is a \emph{filter-pushdown} optimization rule. The extension first on the 
  right above acts as an abstraction/collapse or extraction of metadata before 
  computing the filter/restriction. The rule says that this can more efficiently 
  be done by \emph{filtering first} to restrict the number of rows in the table 
  before the collapse. This is a standard and highly effective optimization in 
  query planning in relational databases which is thus available in any double 
  olog with strong tabulators by \cref{lemma:beck-chevalley}. Importantly, it is 
  a property of any such database schema that such optimization always 
  type-checks and is always error-free.
\end{rem}

\begin{exa} \label{example:beck-chevalley}
  This rule is illustrated by the following. Start with two tables: one 
  assigning to each individual his or her clan membership and another viewing 
  each clan with its factional loyalty:
    \begin{equation*}
      \begin{tabular}{| l | l | }
          \hline\multicolumn{2}{| c |}{\bf Membership}\\
          \hline {\bf Individual }&{\bf Clan}\\
          \hline  Asgeir & Snow-Shod  \\
          \hline  Eorlund & Gray-Mane  \\
          \hline  Galmar & Stone-Fist  \\
          \hline  Idgrod & Battle-Born  \\
          \hline  Nilsine & Shatter-Shield \\
          \hline  Torbjorn & Shatter-Shield  \\
          \hline  Vignar & Gray-Mane  \\
          \hline
      \end{tabular} \qquad\qquad
      \begin{tabular}{| l | l | }
          \hline\multicolumn{2}{| c |}{\bf Loyalty}\\
          \hline {\bf Clan }&{\bf Faction}\\
          \hline  Battle-Born & Imperial  \\
          \hline  Gray-Mane & Stormcloak  \\
          \hline  Shatter-Shield & Imperial  \\
          \hline  Snow-Shod & Stormcloak  \\
          \hline  Stone-Fist & Stormcloack \\
          \hline
      \end{tabular}.
    \end{equation*}
  Suppose that the Membership table is a relation extracted perhaps from some 
  larger dataset of various affiliations of relevant individuals; suppose that 
  the Loyalty table is functional metadata. There are then evidently two 
  schematized morphisms in the olog; one is a proarrow and one is an ordinary 
  arrow:
    \begin{equation*}
      \begin{tikzcd}
        {\fbox{Individual}} && {\fbox{Clan}}
        \arrow["{\text{Membership}}"{inner sep=.8ex}, "\shortmid"{marking}, from=1-1, to=1-3]
      \end{tikzcd}\qquad\qquad
      \begin{tikzcd}
        {\fbox{Clan}} && {\fbox{Faction}}
        \arrow["{\text{Loyalty}}", from=1-1, to=1-3]
      \end{tikzcd}.
    \end{equation*}
  Now, to find the Empire loyalists, we can posit a global element and, given 
  sufficient structure in the olog, form a pullback in the base of the 
  source-target fibration:
    \begin{equation*}
      \begin{tikzcd}
        {\fbox{Individual}} && {\fbox{Individual}} \\
        {\fbox{Individual}} && {\fbox{Individual}}
        \arrow["1", from=1-1, to=1-3]
        \arrow["1"', from=1-1, to=2-1]
        \arrow["1", from=1-3, to=2-3]
        \arrow[""{name=0, anchor=center, inner sep=0}, "1"', from=2-1, to=2-3]
        \arrow["\lrcorner"{anchor=center, pos=0.125}, draw=none, from=1-1, to=0]
      \end{tikzcd}\qquad\qquad
      \begin{tikzcd}
        {\fbox{Imperial Clan}} && {\fbox{Clan}} \\
        1 && {\fbox{Faction}}
        \arrow["{\text{is a}}", from=1-1, to=1-3]
        \arrow["{!}"', from=1-1, to=2-1]
        \arrow["{\text{Loyalty}}", from=1-3, to=2-3]
        \arrow[""{name=0, anchor=center, inner sep=0}, "{\text{Imperial}}"', from=2-1, to=2-3]
        \arrow["\lrcorner"{anchor=center, pos=0.125}, draw=none, from=1-1, to=0]
      \end{tikzcd}
    \end{equation*}
  Now, there are two ways to do the Loyalist query using Beck-Chevalley. The 
  naive way to start is to abstract Membership first along $(1,\text{Loyalty})$ 
  and then filter for the Imperials as in 
    \begin{equation*}
      \begin{tikzcd}
        {\fbox{Individual}} && {\fbox{Clan}} \\
        {\fbox{Individual}} && {\fbox{Faction}}
        \arrow[""{name=0, anchor=center, inner sep=0}, "{\text{Membership}}"{inner sep=.8ex}, "\shortmid"{marking}, from=1-1, to=1-3]
        \arrow[equals, from=1-1, to=2-1]
        \arrow["{\text{Loyalty}}", from=1-3, to=2-3]
        \arrow[""{name=1, anchor=center, inner sep=0}, "{\text{Loyalty}}"'{inner sep=.8ex}, "\shortmid"{marking}, from=2-1, to=2-3]
        \arrow["{\mathrm{ext}}"{description}, draw=none, from=0, to=1]
      \end{tikzcd}\qquad\qquad
      \begin{tikzcd}
        {\fbox{Individual}} && 1 \\
        {\fbox{Individual}} && {\fbox{Faction}}
        \arrow[""{name=0, anchor=center, inner sep=0}, "{\text{Imperial Loyalists}}"{inner sep=.8ex}, "\shortmid"{marking}, from=1-1, to=1-3]
        \arrow[equals, from=1-1, to=2-1]
        \arrow["{\text{Imperial}}", from=1-3, to=2-3]
        \arrow[""{name=1, anchor=center, inner sep=0}, "{\text{Loyalty}}"'{inner sep=.8ex}, "\shortmid"{marking}, from=2-1, to=2-3]
        \arrow["{\mathrm{res}}"{description}, draw=none, from=0, to=1]
      \end{tikzcd}.
    \end{equation*}
  This has the resulting tables
    \begin{equation*}
      \begin{tabular}{| l | l | }
          \hline\multicolumn{2}{| c |}{\bf Loyalty}\\
          \hline {\bf Individual }&{\bf Faction}\\
          \hline  Asgeir & Stormcloak  \\
          \hline  Eorlund & Stormcloak  \\
          \hline  Galmar & Stormcloak  \\
          \hline  Idgrod & Imperial  \\
          \hline  Nilsine & Imperial \\
          \hline  Torbjorn & Imperial  \\
          \hline  Vignar & Stormcloack  \\
          \hline
      \end{tabular} \qquad\qquad
      \begin{tabular}{| l | }
          \hline\multicolumn{1}{| c |}{\bf Imperial Loyalists}\\
          \hline  Idgrod \\
          \hline  Nilsine\\
          \hline  Torbjorn \\
          \hline
      \end{tabular}.
    \end{equation*} 
  Basically, we looked to see which faction each clan supports, associated each 
  individual to the appropriate faction and then took just the names. This is 
  possibly the straightforward and naive way one might proceed at first. On the 
  other hand, we can do a filter on the Membership relation by $(1,\text{is a})$ 
  first and then select individuals:
    \begin{equation*}
      \begin{tikzcd}
        {\fbox{Individual}} && {\fbox{Imperial Clan}} \\
        {\fbox{Individual}} && {\fbox{Clan}}
        \arrow[""{name=0, anchor=center, inner sep=0}, "{\text{Membership}}"{inner sep=.8ex}, "\shortmid"{marking}, from=1-1, to=1-3]
        \arrow[equals, from=1-1, to=2-1]
        \arrow["{\text{is a}}", from=1-3, to=2-3]
        \arrow[""{name=1, anchor=center, inner sep=0}, "{\text{Membership}}"'{inner sep=.8ex}, "\shortmid"{marking}, from=2-1, to=2-3]
        \arrow["{\mathrm{res}}"{description}, draw=none, from=0, to=1]
      \end{tikzcd}\qquad
      \begin{tikzcd}
        {\fbox{Individual}} && {\fbox{Imperial Clan}} \\
        {\fbox{Individual}} && 1
        \arrow[""{name=0, anchor=center, inner sep=0}, "{\text{Membership}}"{inner sep=.8ex}, "\shortmid"{marking}, from=1-1, to=1-3]
        \arrow[equals, from=1-1, to=2-1]
        \arrow["{!}", from=1-3, to=2-3]
        \arrow[""{name=1, anchor=center, inner sep=0}, "{\text{Imperial Loyalists}}"'{inner sep=.8ex}, "\shortmid"{marking}, from=2-1, to=2-3]
        \arrow["{\mathrm{ext}}"{description}, draw=none, from=0, to=1]
      \end{tikzcd}.
    \end{equation*}
  This is to filter for individuals with their Imperial-supporting clan and then 
  to project to the individuals:
    \begin{equation*}
      \begin{tabular}{| l | l | }
          \hline\multicolumn{2}{| c |}{\bf Membership}\\
          \hline {\bf Individual }&{\bf Imperial Clan}\\
          \hline  Idgrod & Battle-Born  \\
          \hline  Nilsine & Shatter-Shield \\
          \hline  Torbjorn & Shatter-Shield  \\
          \hline
      \end{tabular} \qquad\qquad
      \begin{tabular}{| l | }
          \hline\multicolumn{1}{| c |}{\bf Imperial Loyalists}\\
          \hline  Idgrod \\
          \hline  Nilsine\\
          \hline  Torbjorn \\
          \hline
      \end{tabular}.
    \end{equation*}
  Notice that the latter ends up with a much smaller intermediate table by 
  filtering first. And there is actually no aggregation or abstraction, just a 
  select query by taking the projection. This is basically a free query from a 
  computational expense standpoint. These computations are of course carried out 
  in $\Rel$ where we have Beck-Chevalley and pullbacks in the underlying 
  category $\Set$. The point of axiomatizing Beck-Chevalley (perhaps through the 
  assumption of strong tabulators) in the olog is that the structure-preserving 
  data instance will, by functorial and fibrational semantics, can execute the 
  query by either path allowed by the rewrite rule.
\end{exa}

To conclude, briefly we note that coproducts satisfying Beck-Chevalley 
automatically gives an interpretation of fibered equality in the usual 
fibrational semantics of equational type theories.

\begin{cor} \label{cor:equality}
  Any `double category of relations' for which dependent coproducts satisfy 
  Beck-Chevalley is a Eq-fibration in the sense of 
  \cite[\S\S 3.4-3.5]{jacobs1999}.
\end{cor}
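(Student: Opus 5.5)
Recall from \cite[\S\S 3.4--3.5]{jacobs1999} that an Eq-fibration over a base with finite products is a fibration $p$ whose reindexing functors $\delta^*$ along the parameterized diagonals $\delta = \langle \pi, \pi'\rangle\colon \Gamma\times x\to\Gamma\times x\times x$ have left adjoints $\mathrm{Eq}_{\Gamma,x}$, and these left adjoints satisfy Beck--Chevalley for the pullback squares obtained by reindexing along maps $u\colon\Delta\to\Gamma$ of contexts. Some versions also require Frobenius, which we do not treat. The strategy is to obtain both ingredients from structure already available in the source-target fibration $\langle\src,\tgt\rangle\colon\dbl{D}_1\to\dbl{D}_0\times\dbl{D}_0$.

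The base $\dbl{D}_0\times\dbl{D}_0$ has finite products because $\dbl{D}$ is cartesian. Any arrow of the base, in particular a parameterized diagonal $(d_1,d_2)$ in $\dbl{D}_0\times\dbl{D}_0$, already has a left adjoint to restriction: the dependent coproduct $\coprod_{d_1,d_2}$ of \cref{eqn:left-adjoint-to-restriction}. Concretely it is computed as $m\mapsto (d_1)^*\odot m\odot (d_2)_!$, up to the appropriate companion/conjoint convention \cite[Theorem 4.1]{shulman2008}. So we take $\mathrm{Eq} := \coprod_{\delta}$ along diagonals. Existence of the adjoints then costs nothing.

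The substantive step is the Beck--Chevalley requirement. The relevant squares are
\[
(u\times 1\times 1)\circ \delta_{\Delta} = \delta_{\Gamma}\circ (u\times 1).
\]
These are pullbacks in any category with finite products, since they are products of a diagonal with identities. In $\dbl{D}_0\times\dbl{D}_0$ they are componentwise pullbacks of the same shape, hence pullbacks of the base. The hypothesis that dependent coproducts satisfy Beck--Chevalley for pullback squares, as in \cref{lemma:beck-chevalley}, therefore gives exactly the required isomorphisms
\[
\coprod_{\delta_\Delta}(u\times 1)^* \cong (u\times1\times1)^*\coprod_{\delta_\Gamma}.
\]

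The main obstacle I anticipate is bookkeeping rather than mathematics. One must check that Jacobs' notion, phrased for a fibration over a category with finite products, applies to the base $\dbl{D}_0\times\dbl{D}_0$. It does, because products there are computed componentwise. One must also verify that the diagonal squares in the product base are genuine pullbacks and not merely commuting squares. If a stronger form of Eq-fibration is intended, namely that the left adjoint along \emph{every} diagonal-like map satisfies Beck--Chevalley, the same argument applies, since the hypothesis covers all pullback squares. For reassurance I would also check in $\Rel$ that $\mathrm{Eq}(\top)$ returns the diagonal relation, i.e.\ the identity proarrow, consistent with discreteness.
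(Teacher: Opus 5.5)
Your treatment of the equality clause is the paper's argument, and it is correct. The left adjoints $\mathrm{Eq}=\coprod_{\delta}$ along parameterized diagonals come for free as extensions in the equipment. Their Beck--Chevalley condition is the hypothesis applied to the squares $(u\times1\times1)\circ\delta_\Delta=\delta_\Gamma\circ(u\times1)$. You check that these squares are pullbacks in any category with finite products, hence componentwise in $\dbl D_0\times\dbl D_0$. That is exactly the detail the paper compresses into ``Beck-Chevalley follows as a special case.''

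What is missing is part of the definition itself. Jacobs's Eq-fibrations are fibrations with fibred finite products that admit equality. Your restatement drops the fibred finite products, so as written you have only shown that $\langle\src,\tgt\rangle$ admits equality. This is why the paper's proof ends by noting that any `double category of relations' has local products.

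The repair is short. Each fibre $\dbl D(x,y)$ has finite products (\cref{section:local-products-exponentiability}):
\begin{itemize}
\item $m\wedge n$ is the restriction of $m\times n$ along $(\Delta_x,\Delta_y)$;
\item the top element is the restriction along $(!,!)$ of the unit proarrow on $1$.
\end{itemize}
Every reindexing functor $(f,g)^*$ preserves these, because it is right adjoint to $\coprod_{f,g}$.

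As for the Frobenius condition you set aside, the paper does not invoke it here either. If you do include it, there is nothing extra to prove: \cref{prop:Frobenius-reciprocity} with $(f,g)$ a parameterized diagonal reads $\mathrm{Eq}(m\wedge\delta^*p)\cong\mathrm{Eq}(m)\wedge p$.
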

\begin{proof}
  Such a double category always has all dependent coproducts from the equipment 
  structure. Hence it has fibered equality as in \cite[\S 3.4]{jacobs1999} since 
  Beck-Chevalley follows as a special case. Any `double category of relations' 
  has local products (see \cref{section:local-products-exponentiability}).
\end{proof}

\begin{rem}
  A native treatment of equality is somewhat beside the point in the present 
  work as relational database equality is closer to comparing common columns 
  along diagonals, that is, basically a join operation (already studied in 
  \cite{lambert2025}). Equality here is substitution of terms into fibered 
  equality predicates derived from the dependent coproducts applied to local 
  terminals.
\end{rem}

\section{Modality \& Tabulators}
\label{section:modality}

We are left with accounting for the modal queries in 
\cref{fig:quant-queries}. Viewing $r\colon x\proto y$ as a fixed 
proarrow, or perhaps a fixed relational attribute in an olog, we 
have the operation
  \begin{equation} \label{eqn:possibility-operator}
    \diamondsuit_r:=r\odot - \colon \dbl D(y,1)\to\dbl D(x,1) \qquad\qquad p\colon y\proto 1 \quad \mapsto \quad r\odot p
  \end{equation}
by precomposition with $r$. This is a functor by interchange. And 
we drop the subscripted `$r$' whenever convenient. The point of the previous 
discussion and \cref{prop:possibility-rewrite-rule-1} is that a subtype of $y$ 
induces an effect / proposition $y\proto 1$ and then loose 
composition carries the force of existential quantification over 
the subtype thus realizing the modal operator.

\begin{defi}[Adjoint Modality] \label{def:adjoint-modality}
  The operator $\diamondsuit_r$ of \cref{eqn:possibility-operator}
  is the left adjoint of an \textbf{adjoint modality} if it has a 
  right adjoint 
    \begin{equation*}
      \Box\colon\dbl D(x,1)\to\dbl D(y,1) \qquad \diamondsuit_r\dashv\Box
    \end{equation*}
  called \emph{(reverse) necessity}. The pair is referred to as an adjoint modality where $\Box$ is the right adjoint of the modality.
\end{defi}

The reason for the qualifier \emph{reverse} will be made clear below. Notice 
that we have not constructed $\Box$ at any point so far in an elementary way in 
a general cartesian equipment. The double category of relations, however, 
certainly has such adjoint modalities for each fixed relation. Under the 
definition above, these operators justly bear the standard notations not only on 
account of the $\Rel$-semantics. There is also the following observation, 
namely, that these operators satisfy the usual monad and comonad laws, as well 
as the characteristic (B) axiom of modal S5. In this sense, the definition is 
perhaps to strong, since S5 is a relatively refined system; however, our 
construction of the necessity operators using dependent products will satisfy 
these laws anyway.

\begin{prop} \label{prop:modal-properties}
  Let $r\colon x\proto y$ induce an adjoint modality 
  (\cref{def:adjoint-modality}). If $r$ is a preorder, then the possibility and 
  necessity operators associated to $r$ satisfy the monad and comonad axioms 
    \begin{enumerate}
      \item $\id\leq\diamondsuit$ and $\diamondsuit\diamondsuit\leq\diamondsuit$
      \item $\Box\leq \id$ and $\Box\Box\leq\Box$
    \end{enumerate}
  as well as the further (B) axioms, namely, 
    \begin{enumerate}
      \item $\id\leq \Box\diamondsuit$
      \item $\diamondsuit\Box\leq \id$.
    \end{enumerate}
\end{prop}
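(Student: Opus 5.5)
The plan is to exploit two facts: a double olog is locally posetal, so every hom $\dbl D(x,1)$ is a preorder, and an adjunction between preorders is a Galois connection. Since $r$ is assumed to be a preorder, its source and target coincide, $x=y$. Hence $\diamondsuit=r\odot -$ and $\Box$ are both endo-operators on $\dbl D(x,1)$, and composites such as $\Box\diamondsuit$ and $\diamondsuit\Box$ make sense. Being a preorder means reflexivity, a globular cell $\id_x\Rightarrow r$, and transitivity, a globular cell $r\odot r\Rightarrow r$. Everything will be derived from these two cells, monotonicity of $\diamondsuit$ and $\Box$ (functoriality), and the unit and counit of $\diamondsuit\dashv\Box$.

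I would treat the (B) axioms first, since they need nothing beyond the adjunction. In a locally posetal setting, the unit of $\diamondsuit\dashv\Box$ is exactly $p\leq\Box\diamondsuit p$ and the counit is exactly $\diamondsuit\Box q\leq q$. So (B) is just a restatement of \cref{def:adjoint-modality} and uses no hypothesis on $r$.

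For the monad axioms I would whisker the preorder cells by $p\colon x\proto 1$ using horizontal composition (interchange).
\begin{itemize}
  \item Reflexivity gives $p\cong\id_x\odot p\leq r\odot p=\diamondsuit p$.
  \item Transitivity gives $\diamondsuit\diamondsuit p=r\odot(r\odot p)\cong(r\odot r)\odot p\leq r\odot p=\diamondsuit p$, using the associativity isomorphism.
\end{itemize}

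For the comonad axioms I would argue by adjunction rather than directly, since $\Box$ has no explicit formula. From $\id\leq\diamondsuit$ and the counit we get $\Box q\leq\diamondsuit\Box q\leq q$, so $\Box\leq\id$. Applying monotonicity of $\Box$ to $\Box q\leq q$ then gives $\Box\Box q\leq\Box q$. If one also wants the standard S4 direction $\Box\leq\Box\Box$, take mates: $\diamondsuit\diamondsuit\leq\diamondsuit$ transposes under $\diamondsuit\diamondsuit\dashv\Box\Box$ to $\Box\leq\Box\Box$. Concretely, $\diamondsuit\diamondsuit\Box q\leq\diamondsuit\Box q\leq q$, and transposing twice yields $\Box q\leq\Box\Box q$.

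The only genuine point of care, and the step I expect to be the main obstacle, is the typing and bookkeeping. We must verify that the preorder hypothesis really forces $x=y$, so that the composites are well-typed, and that whiskering a globular cell along $p$ is legitimate in the equipment, where the unitors and associators are isomorphisms and hence invisible in the local poset. Once that is settled, each inequality is a one-line consequence of the adjunction or of whiskering.
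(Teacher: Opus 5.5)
Your proof is correct and follows essentially the same route as the paper. The (B) axioms are the unit and counit of $\diamondsuit\dashv\Box$, the monad laws come from whiskering the reflexivity and transitivity cells of $r$ with $p$, and the comonad laws for $\Box$ are deduced from the adjunction; the paper only says $\Box$ is ``automatically'' a comonad, whereas you derive $\Box\leq\id$ explicitly, and your mate argument for $\Box\leq\Box\Box$ supplies the genuine comultiplication direction, which the stated $\Box\Box\leq\Box$ (an immediate instance of $\Box\leq\id$) does not capture.
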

\begin{proof}
  Since $\diamondsuit\dashv\Box$, it suffices to show that $\diamondsuit$ 
  satisfies the monad axioms as $\Box$ will automatically be a comonad. Likewise 
  the (B) axioms are just a restatement of the adjunction assumption. 
  Reflexivity of $r$ means that there is a canonical cell 
    \begin{equation*}
      \begin{tikzcd}
        x & x \\
        x & x
        \arrow[""{name=0, anchor=center, inner sep=0}, "{\id_x}"{inner sep=.8ex}, "\shortmid"{marking}, from=1-1, to=1-2]
        \arrow[equals, from=1-1, to=2-1]
        \arrow[equals, from=1-2, to=2-2]
        \arrow[""{name=1, anchor=center, inner sep=0}, "r"'{inner sep=.8ex}, "\shortmid"{marking}, from=2-1, to=2-2]
        \arrow["{\delta_x}"{description}, draw=none, from=0, to=1]
      \end{tikzcd}
    \end{equation*}
  expressing the fact that the diagonal factors through $r$. But then composing 
  we have 
    \begin{equation*}
      \begin{tikzcd}
        1 & x & x \\
        1 & x & x
        \arrow[""{name=0, anchor=center, inner sep=0}, "p"{inner sep=.8ex}, "\shortmid"{marking}, from=1-1, to=1-2]
        \arrow[equals, from=1-1, to=2-1]
        \arrow[""{name=1, anchor=center, inner sep=0}, "\id"{inner sep=.8ex}, "\shortmid"{marking}, from=1-2, to=1-3]
        \arrow[equals, from=1-2, to=2-2]
        \arrow[equals, from=1-3, to=2-3]
        \arrow[""{name=2, anchor=center, inner sep=0}, "p"'{inner sep=.8ex}, "\shortmid"{marking}, from=2-1, to=2-2]
        \arrow[""{name=3, anchor=center, inner sep=0}, "r"'{inner sep=.8ex}, "\shortmid"{marking}, from=2-2, to=2-3]
        \arrow["{1_p}"{description}, draw=none, from=0, to=2]
        \arrow["\leq"{description}, draw=none, from=1, to=3]
      \end{tikzcd}
    \end{equation*}
  showing that $p\leq \diamondsuit p$ by loose composition and 
  \cref{prop:possibility-rewrite-rule-1}. Likewise, using reflexivity and the 
  same proposition, we have 
    \begin{equation*}
      \begin{tikzcd}
        1 & x & x & x \\
        1 & x && x
        \arrow[""{name=0, anchor=center, inner sep=0}, "p"{inner sep=.8ex}, "\shortmid"{marking}, from=1-1, to=1-2]
        \arrow[equals, from=1-1, to=2-1]
        \arrow["r"{inner sep=.8ex}, "\shortmid"{marking}, from=1-2, to=1-3]
        \arrow[equals, from=1-2, to=2-2]
        \arrow["r"{inner sep=.8ex}, "\shortmid"{marking}, from=1-3, to=1-4]
        \arrow[equals, from=1-4, to=2-4]
        \arrow[""{name=1, anchor=center, inner sep=0}, "p"'{inner sep=.8ex}, "\shortmid"{marking}, from=2-1, to=2-2]
        \arrow[""{name=2, anchor=center, inner sep=0}, "r"'{inner sep=.8ex}, "\shortmid"{marking}, from=2-2, to=2-4]
        \arrow["{1_p}"{description}, draw=none, from=0, to=1]
        \arrow["\leq"{description}, draw=none, from=1-3, to=2]
      \end{tikzcd}
    \end{equation*}
  so that $\diamondsuit \diamondsuit p \leq \diamondsuit p$ holds, as required.
\end{proof}

\begin{rem}
  There are pecularities of this set-up. First, although classical set-theoretic 
  Kripke semantics require the underlying relation to be not only a preorder but 
  also \emph{symmetric} to ensure that the characteristic (B) axioms of modal S5 
  are satisfied. In fact, equivalence relations are precisely those 
  accessibility relations modeling classical S5. In the present instance, the 
  assumption of the existence of the adjoint modality of course just axiomatizes 
  this axiom scheme, so it suffices to ask that the underlying relation is a 
  preorder to get the characteristic monad and comonad laws. Now, this 
  adjointness is warranted as although classical possibility and necessity are 
  interdefinable using negation, no such relationship exists in general 
  categorical semantics due to the general failure of double negation. 
  Essentially, that is, we ask for some kind of governing interaction between 
  the two operators, and those laws of S5 in the form of the (B) axioms are the 
  clearest, cleanest and most intuitive and well-established. Whereas on the 
  other hand, the various proposed weak interactions axiomatized for 
  intuitionistic S4 are not universally agreed upon and do not imply 
  interdefinability anyway. Second, the tabulator models of necessity in the 
  next section arise from such adjunctions anyway and will thus satisfy the (B) 
  axiom scheme.
\end{rem}

Since possibility can be viewed as loose composition, the modal necessity 
operator could certainly just be asked for in the form of something like 
\emph{closedness} \cite[\S 5]{shulman2008}. Our interest, however, is in 
recovering necessity from the assumed structure of dependent products, since, as 
we have seen, necessity is universal quantification over accessible states. 
\cref{prop:possibility-rewrite-rule-1} shows that possibility is a composite 
operation, namely, a substitution followed by a dependent coproduct further 
confirming that a right adjoint to subsitution would then plausibly recover the 
necessity operator too. That this is so in $\Rel$ can easily be calculated by 
hand. This calculation suggests the general case. In fact, this perspective has 
already been considered in \cite[\S 3]{hermida2011} where the modal operators 
arise as composite substitutions followed by quantification functors. This 
approach is both bicategorical in nature and starts from the bicategories of 
spans or of relations where every proarrow is essentially its own tabulator. Our 
approach applies the same general philosophy to double categories in the 
following way. First recall the standard notion \cite{grandis1999}.

\begin{defi} \label{def:tabulator}
  A double category $\dbl D$ \textbf{has tabulators} if the external identity 
  functor has a right adjoint 
    \begin{equation*}
      \begin{tikzcd}
        {\dbl D_0} & {\dbl{D}_1}
        \arrow["{\id_{-}}", shift left=2, from=1-1, to=1-2]
        \arrow["T", shift left=2, from=1-2, to=1-1]
      \end{tikzcd} \qquad \qquad \id_{-}\dashv T.
    \end{equation*}
  The \textbf{tabulator} of a proarrow $m\colon x\proto y$ is a cell $\tau_m$ 
  having the universal property that there exists a unique morphism $h$ making 
  an equality of diagrams
    \begin{equation*}
      \begin{tikzcd}
        Tm & Tm \\
        x & y
        \arrow[""{name=0, anchor=center, inner sep=0}, "\shortmid"{marking}, equals, from=1-1, to=1-2]
        \arrow["d"', from=1-1, to=2-1]
        \arrow["c", from=1-2, to=2-2]
        \arrow[""{name=1, anchor=center, inner sep=0}, "m"'{inner sep=.8ex}, "\shortmid"{marking}, from=2-1, to=2-2]
        \arrow["{\tau_m}"{description}, draw=none, from=0, to=1]
      \end{tikzcd} \qquad = \qquad 
      \begin{tikzcd}
        Tm & Tm \\
        z & z \\
        x & y
        \arrow[""{name=0, anchor=center, inner sep=0}, "\shortmid"{marking}, equals, from=1-1, to=1-2]
        \arrow["h"', from=1-1, to=2-1]
        \arrow["h", from=1-2, to=2-2]
        \arrow[""{name=1, anchor=center, inner sep=0}, "\shortmid"{marking}, equals, from=2-1, to=2-2]
        \arrow["f"', from=2-1, to=3-1]
        \arrow["g", from=2-2, to=3-2]
        \arrow[""{name=2, anchor=center, inner sep=0}, "m"'{inner sep=.8ex}, "\shortmid"{marking}, from=3-1, to=3-2]
        \arrow["{\id_h}"{description}, draw=none, from=0, to=1]
        \arrow["\theta"{description}, draw=none, from=1, to=2]
      \end{tikzcd}
    \end{equation*}
  for each such cell $\theta$.
\end{defi}

Tabulators viewed as generalized elements constructions ought always to 
roundtrip in one direction, namely, that from proarrows to spans and then back 
to proarrows. This is to say that the elements construction of the base-valued 
functor representing the original fibration object ought to return the original 
fibration object. Note that the other roundtrip works in the case of ordinary 
fibrations and discrete fibrations but fails in \emph{2-toposes} 
\cite{weber2007} due to issues of \emph{size} and \emph{admissibility}. For this 
reason we only axiomatize the former direction. The definition formalizing this 
is then the following one \cite[\S 5.1]{aleiferi2018}.

\begin{defi} \label{def:strong-tabulator}
  The tabulator of a proarrow $p\colon X\proto Y$ is \textbf{strong} if the 
  canonical morphism 
    \begin{equation*}
      \begin{tikzcd}
        Tp & Tp \\
        x & y
        \arrow[""{name=0, anchor=center, inner sep=0}, "\shortmid"{marking}, equals, from=1-1, to=1-2]
        \arrow["d"', from=1-1, to=2-1]
        \arrow["c", from=1-2, to=2-2]
        \arrow[""{name=1, anchor=center, inner sep=0}, "p"'{inner sep=.8ex}, "\shortmid"{marking}, from=2-1, to=2-2]
        \arrow["{\tau_p}"{description}, draw=none, from=0, to=1]
      \end{tikzcd} \qquad = \qquad
      \begin{tikzcd}
        Tp & Tp \\
        x & y \\
        x & y
        \arrow[""{name=0, anchor=center, inner sep=0}, "\shortmid"{marking}, equals, from=1-1, to=1-2]
        \arrow["d"', from=1-1, to=2-1]
        \arrow["c", from=1-2, to=2-2]
        \arrow[""{name=1, anchor=center, inner sep=0}, "{d^*\odot c_!}"'{inner sep=.8ex}, "\shortmid"{marking}, from=2-1, to=2-2]
        \arrow[equals, from=2-1, to=3-1]
        \arrow[equals, from=2-2, to=3-2]
        \arrow[""{name=2, anchor=center, inner sep=0}, "p"'{inner sep=.8ex}, "\shortmid"{marking}, from=3-1, to=3-2]
        \arrow["{\tau_p}"{description}, draw=none, from=0, to=1]
        \arrow["{\exists\,!}"{description, pos=0.6}, draw=none, from=1, to=2]
      \end{tikzcd}
    \end{equation*}
    is an isomorphism. A double category has \textbf{strong tabulators} if it 
    has tabulators and each is strong.
\end{defi}

\begin{rem}
  This means, equivalently, that each such tabulator cell $\tau_p$ is an 
  extension cell. Consequently, a double category has strong tabulators if each 
  of its proarrows is the extension of its tabulator. The double category of 
  relations is an example.
\end{rem}

\begin{lem}
  For any such strong tabulator, the three constructions of $\diamondsuit \phi$ 
  coincide in the sense that
    \begin{equation*}
      \coprod_{d}c^*\phi \cong r\odot\phi 
    \end{equation*}
  holds for any proposition $\phi\colon y\proto 1$ and $r\colon x\proto y$ whose 
  tabulator is strong.
\end{lem}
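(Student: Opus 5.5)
We read $\coprod_d c^*\phi$ as follows. First restrict $\phi\colon y\proto 1$ along the codomain leg $c\colon Tr\to y$ of the tabulator; this gives $c^*\phi$, which the equipment computes as $c_!\odot\phi\colon Tr\proto 1$ (restriction along the source leg is precomposition with a companion). Then extend along the domain leg $d\colon Tr\to x$, which in the one-sided form of \cref{eqn:one-sided-left-adjoint-to-restriction2} (applied on the source side) is precomposition with the conjoint $d^*$. By \cite[Theorem 4.1]{shulman2008}, restrictions and extensions are computed by composing with companions and conjoints. So the plan is to show
  \begin{equation*}
    \coprod_d c^*\phi \;\cong\; d^*\odot(c_!\odot\phi) \;\cong\; (d^*\odot c_!)\odot\phi \;\cong\; r\odot\phi .
  \end{equation*}

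The key steps, in order, are these.
\begin{enumerate}
  \item Identify the restriction $c^*\phi$ with $c_!\odot\phi$, using the cartesian cell built from the base change cell of the companion $c_!$.
  \item Identify the left adjoint $\coprod_d$ to restriction along $d$ on the source side with $d^*\odot -$. This is the source-side analogue of the adjunction $-\odot h_!\dashv -\odot h^*$ of \cref{discussion:kan-quantification}: restriction along $d$ on the left is $d_!\odot -$, and its left adjoint is $d^*\odot -$, coming from the companion/conjoint adjunction $d_!\dashv d^*$ in the loose bicategory.
  \item Use associativity of loose composition up to coherent isomorphism.
  \item Apply strong tabulators (\cref{def:strong-tabulator}). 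The comparison globular cell $d^*\odot c_!\Rightarrow r$ is invertible, so whiskering with $\phi$ gives $(d^*\odot c_!)\odot\phi\cong r\odot\phi$.
\end{enumerate}
Combined with \cref{prop:possibility-rewrite-rule-1}, which identifies $r\odot\phi$ with the restriction-then-extension form of \cref{def:possibility-operator}, this shows that all three constructions of $\diamondsuit\phi$ agree.

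The main obstacle is bookkeeping of sidedness, namely making sure the left adjoint to restriction along $d$ in the first variable really is $d^*\odot-$ and not $d_!\odot-$. I would verify this by producing the unit and counit from the companion/conjoint triangle identities: unit $\id_{Tr}\Rightarrow d_!\odot d^*$ and counit $d^*\odot d_!\Rightarrow \id_x$. Whiskering these gives the adjunction $d^*\odot-\dashv d_!\odot-$ between the hom categories. Since double ologs are locally posetal, the coherence conditions are automatic, and isomorphism reduces to inequalities in both directions.
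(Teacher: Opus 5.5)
Your proposal is correct and follows the paper's own proof: both establish the chain $\coprod_d c^*\phi\cong d^*\odot c_!\odot\phi\cong r\odot\phi$ from the companion/conjoint description of restriction and extension together with strength of the tabulator, and both use \cref{prop:possibility-rewrite-rule-1} for the remaining construction. Your check that the source-side left adjoint is $d^*\odot-$, obtained from $d_!\dashv d^*$, makes explicit a sidedness step that the paper leaves implicit.
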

\begin{proof}
  Consider 
    \begin{equation*}
      \coprod_{d}c^*\phi \cong d^*\odot c_!\odot \phi \cong r\odot\phi
    \end{equation*}
  by the definition of the action $c^*(-) = c_!\odot -$ and that $r$ has a 
  strong tabulator. If $\phi$ arises as the extension of a subtype or other 
  arrow into $y$, apply \cref{prop:possibility-rewrite-rule-1} to see that the 
  other possible definition agrees with these two.
\end{proof}

\begin{defi} \label{def:necessity-from-dep-products}
  Let $\dbl D$ denote a cartesian equipment with strong tabulators and dependent 
  products. The \textbf{modal necessity} operator associated to 
  $r\colon x\proto y$ is the composite functor 
    \begin{equation*}
      \Box_r:=\prod_{d}c^*(-)\colon\dbl D(y,1)\to\dbl D(x,1)
    \end{equation*}
  where $\langle d,c\rangle\colon Tr\to x\times y$ is the tabulator of $r$.
\end{defi}

\begin{rem}
  \cref{def:necessity-from-dep-products} is to priviledge modalities in the 
  \emph{same direction} as in \cite[Remark 3.3]{hermida2011}. That is, both 
  necessity and possibility for a given $r$ return values of its domain. In the 
  measurement interpretation, this is to return the states for which all 
  measurements fall inside the \emph{good} values specified by the proposition 
  in question. Now, in any such double category, there is a reverse operation 
  $r\mapsto r^\dagger$ by taking an extension along the legs of the tabulator of 
  $r$ in reversed order. This coincides with the operation $r\mapsto r^\circ$ 
  from \cite{carboni1987} as discussed above in good cases \cite{lambert2022}. 
  Thus, there are actually four operators
    \begin{enumerate}
      \item $\Box_r:=\prod_{d}c^*(-)$
      \item $\Box_{r^\dagger}:=\prod_{c}d^*(-)$
      \item $\diamondsuit_r:=\coprod_dc^*(-)$
      \item $\diamondsuit_{r^\dagger}:=\coprod_cd^*(-)$
    \end{enumerate}
  This four-fold modal set-up is a feature of such relational and hyperdoctrine 
  accounts of modality. The \emph{up-modalities} are the first and third above, 
  returning states where outcomes stay within the specified allowed values; 
  whereas the \emph{down-modalities} are the second and fourth, returing the 
  outcomes whose states are among the specified ones. When $r$ is an 
  endorelation, up- or down-modalities simply specify subsequent or antecedent 
  states. In any case, the two types are inter-related in the following way.
\end{rem}

\begin{lem}
  Under the hypotheses of \cref{def:necessity-from-dep-products}, the (B) axioms 
  are satisfied and the pairs $\diamondsuit_r\dashv\Box_{r^\dagger}$ and 
  $\diamondsuit_{r^\dagger}\dashv \Box_r$ are adjoint modalities as in 
  \cref{def:adjoint-modality}.  If $r\colon x\proto x$ is a preorder, then all 
  the (co)monad axioms for both up- and down-modalities are satisfied.
\end{lem}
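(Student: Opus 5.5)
The plan is to obtain both adjunctions by composing the two fibrational adjunctions that are already available, and then to read off the modal laws formally. Let $\langle d,c\rangle\colon Tr\to x\times y$ be the strong tabulator of $r$. The operators are composites of one-sided substitutions and their adjoints on the fibers over $(-,1)$, so we have the adjunctions
  \begin{equation*}
    \coprod_d\dashv d^* \dashv \prod_d \qquad\text{and}\qquad \coprod_c\dashv c^*\dashv \prod_c .
  \end{equation*}
Here $c^*\colon \dbl D(y,1)\to\dbl D(Tr,1)$ and $d^*\colon\dbl D(x,1)\to\dbl D(Tr,1)$. The left adjoints exist by the equipment structure (\cref{discussion:kan-quantification}), and the right adjoints are the assumed dependent products. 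Composites of adjunctions are adjunctions, so
  \begin{equation*}
    \diamondsuit_{r^\dagger}=\coprod_c d^*\;\dashv\; \prod_d c^* = \Box_r
  \end{equation*}
from $\coprod_c\dashv c^*$ and $d^*\dashv\prod_d$. Symmetrically, $\coprod_d c^*\dashv\prod_c d^*$ gives $\diamondsuit_r\dashv\Box_{r^\dagger}$. To justify naming the left adjoints $\diamondsuit$, I would use the lemma just proved: it identifies $\coprod_d c^*$ with $r\odot-$ of \cref{eqn:possibility-operator}. For the reversed direction I would apply the same lemma to $r^\dagger$. Its tabulator should have the swapped legs $(c,d)$, because $r^\dagger$ is by definition the extension of $\tau_r$ along the reversed legs and is therefore strong. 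This makes \cref{def:adjoint-modality} literally satisfied.

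The (B) axioms then need no further work. In the locally posetal setting, an adjunction $\diamondsuit\dashv\Box$ is equivalent to having the unit $\id\leq\Box\diamondsuit$ and the counit $\diamondsuit\Box\leq\id$. This is the observation already used in the proof of \cref{prop:modal-properties}.

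For the preorder case, I would apply \cref{prop:modal-properties} twice. It applies to $r$ with the adjoint pair $\diamondsuit_r\dashv\Box_{r^\dagger}$. It also applies to $r^\dagger$ with $\diamondsuit_{r^\dagger}\dashv\Box_r$. This requires checking that $r^\dagger$ is again a preorder. Reflexivity comes from transposing the cell $\id_x\Rightarrow r$ through the tabulator. Transitivity uses $(r\odot r)^\dagger\cong r^\dagger\odot r^\dagger$. Once $\diamondsuit_r$ and $\diamondsuit_{r^\dagger}$ are monads, their right adjoints $\Box_{r^\dagger}$ and $\Box_r$ are automatically comonads. This covers all four operators.

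I expect the main obstacle to be the bookkeeping around $r^\dagger$. One must confirm that its tabulator really has legs $(c,d)$, or at least that it yields the same substitution composites, so that the lemma on the three constructions applies. One must also confirm that $\dagger$ preserves preorders and is compatible with loose composition. I would first rely on the identification $r^\dagger\cong r^\circ$ from \cite{lambert2022} and on the compact-closed calculus of \cite{carboni1987}, where $(-)^\circ$ is a contravariant involution on loose composition. If that identification is unavailable in full generality, I would fall back on checking reflexivity and transitivity of $r^\dagger$ directly via the tabulator's universal property.
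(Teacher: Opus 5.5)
Your proposal is correct and is essentially the paper's proof: compose $c^*\dashv\prod_c$ with $\coprod_d\dashv d^*$ (and dually), read the (B) axioms off as unit and counit, and cite \cref{prop:modal-properties} for the preorder case, where your check that $r^\dagger\cong r^\circ$ is again a preorder fills in what the paper leaves as ``dual.'' The remaining bookkeeping is lighter than you expect, since $\coprod_c d^*(\psi)\cong c^*\odot d_!\odot\psi = r^\dagger\odot\psi$ follows directly from the definition of $r^\dagger$ as the extension of $\id_{Tr}$ along $(c,d)$; so you never need to identify the tabulator of $r^\dagger$, which ``it is an extension, hence strong'' would not establish on its own anyway.
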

\begin{proof}
  See \cref{prop:modal-properties} for the last part. For the adjoint modality 
  statement, consider the diagram of adjunctions
    \begin{equation*}
      \begin{tikzcd}
        {\dbl D(y,1)} & {\dbl D(Tr,1)} & {\dbl D(x,1)}
        \arrow["{c^*(-)}", shift left=2, from=1-1, to=1-2]
        \arrow["{\prod_c}", shift left=2, from=1-2, to=1-1]
        \arrow["{\coprod_d}", shift left=2, from=1-2, to=1-3]
        \arrow["{d^*(-)}", shift left=2, from=1-3, to=1-2]
      \end{tikzcd}
    \end{equation*}
  showing that $\diamondsuit_r\dashv\Box_{r^\dagger}$, as required. The other 
  statement is dual.
\end{proof}

\begin{exa}
  When specialized to $\Rel$, the necessity operator here returns the necessity 
  query in \cref{fig:quant-queries}, thus completing our account of those six 
  queries.
\end{exa}

\begin{rem}
  Evidently symmetry in the form $r^\dagger\leq r$ collapses the distinction 
  between the possibility and necessity pairs: 
  $\diamondsuit_r = \diamondsuit_{r^\dagger}$ and $\Box_r = \Box_{r^\dagger}$ so 
  that directly $\diamondsuit_r\dashv\Box_r$. In this way, an equivalence 
  relation $r$ induces an adjoint modality between the now just \emph{two} modal 
  operators without the foregoing up- vs. down- distinction.
\end{rem}

\part{Cartesian Closedness \& FOML}

This second part concerns ultimately the way in which double ologs model 
first-order modal logic (FOML) locally. Inasmuch as the classical operations of 
relational databases are framable in FOML this means that suitably structured 
double ologs thus provide a complete synthetic framework for database querying. 
There are two technical aspects of this development. First is to connect 
cartesian closedness with the assumed dependent products coming with any 
$\prod$-double olog. The connection is made via the additional structure of 
tabulators. To show that this works, we study in some detail the modular laws 
and Frobenius reciprocity which are valid in any `double category of relations'. 
Our analysis of local products with identity proarrows leads to directly to our 
formulas for the general exponential. Secondly, the evident utility of a 
hypothetical negation operator leads to the natural question of cocartesian 
structure and whether, how and to what extent this results in local disjunction 
and how it interacts with local products. It turns out that, assuming a global 
distributive law, conjunction and disjunction distribute locally as well. These 
propositional and first-order laws are then compared with the various canonical 
modal statements such as the usual (K) axiom. Finally, it is shown that such 
first-order models interpret description logic.

\section{Local Products \& Exponentiability}
\label{section:local-products-exponentiability}

To continue the development, we work first with the given cartesian structure 
assumed in our ambient set-ups without assuming further structures as in later 
sections. If $\dbl D$ is a cartesian equipment, then each hom-category $\dbl D(x,y)$ has cartesian products, given by the formula 
  \begin{equation*} \label{eqn:local-products-construction}
    \begin{tikzcd}
      x & y \\
      1 & 1
      \arrow[""{name=0, anchor=center, inner sep=0}, "{1_{x,y}}"{inner sep=.8ex}, "\shortmid"{marking}, from=1-1, to=1-2]
      \arrow["{!}"', from=1-1, to=2-1]
      \arrow["{!}", from=1-2, to=2-2]
      \arrow[""{name=1, anchor=center, inner sep=0}, "\shortmid"{marking}, from=2-1, to=2-2]
      \arrow["{\mathrm{res}}"{description}, draw=none, from=0, to=1]
    \end{tikzcd}\qquad\qquad
    \begin{tikzcd}
      x && y \\
      {x\times x} && {y\times y}
      \arrow[""{name=0, anchor=center, inner sep=0}, "{m\wedge n}"{inner sep=.8ex}, "\shortmid"{marking}, from=1-1, to=1-3]
      \arrow["{\Delta_x}"', from=1-1, to=2-1]
      \arrow["{\Delta_y}", from=1-3, to=2-3]
      \arrow[""{name=1, anchor=center, inner sep=0}, "{m\times n}"'{inner sep=.8ex}, "\shortmid"{marking}, from=2-1, to=2-3]
      \arrow["{\mathrm{res}}"{description}, draw=none, from=0, to=1]
    \end{tikzcd}
  \end{equation*}
as described in \cite[Proposition 4.3.2]{aleiferi2018}. Note that the wedge 
operator is thus understood also as a loose composite 
  \begin{equation*}
    m\wedge n \cong \Delta_!\odot (m\times n) \odot \Delta^*
  \end{equation*}
by the usual construction of restrictions. This leads 
naturally to the following definitions. 

\begin{defi}\label{def:exponentiable-and-cartesianclosed}
  A proarrow $m$ in a cartesian equipment $\dbl D$ is \textbf{exponentiable} if 
  each product functor 
    \begin{equation*}
      m\wedge-\colon \dbl{D}(x,y)\to\dbl{D}(x,y)
    \end{equation*}
  has a right adjoint $(-)^m$. The object $n^m$ is called the 
  \textbf{exponential} of $n$ by $m$. A cartesian equipment is \textbf{locally 
  cartesian closed} if each proarrow is exponentiable.
\end{defi}

\begin{exa}
  For relations $R,S\colon X\proto Y$, the exponential is given by 
    \begin{equation*}
      S^R = \lbrace (x,y)\mid xRy \text{ implies } xSy\rbrace
    \end{equation*}
  and computed via adjoints as 
    \begin{equation*}
      \forall_RR^*(S)
    \end{equation*}
  thinking of $R$ as its own tabulator.
\end{exa}

Of course we may simply ask that our cartesian equipments are locally cartesian 
closed in this sense. It is a point of interest, however, to connect this 
structure to the right adjoints to substitution assumed in previous sections 
inasmuch as, (1) these are supposed to provide \emph{generalized hom objects} 
and (2) there is an evident universal quantification over pairs $(x,y)$ in the 
example above. The answer, roughly, is that these right adjoints are computable 
from right adjoints to substitution whenever the given cartesian equipment has 
strong tabulators. This development showcases the relationships between 
compactness, Frobenius, modularity, comprehension schemes, and exponentiability 
all in terms of the requested equipment and cartesian structure on the ambient 
double category.

\section{Modularity \& Frobenius}
\label{section:modularity-frobenius}

It is asserted in \cite[\S 3]{lambert2022} and proved in the cited references 
that any `double category of relations' satisfies the \emph{Frobenius 
reciprocity Law} for the accompanying doctrine. This is proved here in detail 
since the proof is interesting in its own right and the law is the main 
ingredient in proving that right adjoints to substitution give cartesian closed 
structure.

Recall from \cite{carboni1987} that any `bicategory of relations' is compact 
closed and has an involution operation 
  \begin{equation*}
    f \mapsto f^\circ
  \end{equation*}
satisfying 
  \begin{equation*}
    1^\circ = 1\qquad (f^\circ)^\circ = f\qquad (gf)^\circ = f^\circ g^\circ
  \end{equation*}
with $(-)^\circ$ defined using the unit and counit in the compact closed 
structure as in the proof of \cite[Theorem 2.4]{carboni1987}. Now, if $f$ is a 
\emph{map} then $f^\circ$ is that right adjoint $f\dashv f^\circ$ 
\cite[Lemma 2.5]{carboni1987}. This extends to `double categories of relations' 
in the following way.

\begin{lem}
  In any `double category of relations', the equations
    \begin{enumerate}
      \item $(f_!)^\circ = f^*$
      \item $(f^*)^\circ = f_!$
    \end{enumerate}
  each hold.
\end{lem}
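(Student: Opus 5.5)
The plan is to reduce both equations to the bicategorical fact \cite[Lemma 2.5]{carboni1987} that for a map $g$ in a bicategory of relations, $g^\circ$ is the right adjoint of $g$. The loose bicategory $\mathcal{L}(\dbl D)$ of a `double category of relations' is a bicategory of relations in the sense of \cite{carboni1987}; this is essentially how \cite{lambert2022} sets things up. The involution $(-)^\circ$ is defined there via the compact closed structure, with unit and counit given by the companion and conjoint of the diagonal $\Delta_x$ and of $!\colon x\to 1$. In any equipment, for each tight arrow $f$ the companion $f_!$ is a left adjoint in the loose bicategory with right adjoint $f^*$. The unit and counit are the cells $\id_x\Rightarrow f_!\odot f^*$ and $f^*\odot f_!\Rightarrow \id_y$ (in the paper's diagrammatic composition order), obtained by pasting the base change cells $\epsilon,\eta,\rho,\delta$ \cite[\S 4]{shulman2008}. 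So $f_!$ is a map in Carboni--Walters' sense.

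First I check that $f_!$ is a map, meaning a left adjoint. Then \cite[Lemma 2.5]{carboni1987} gives $f_!\dashv (f_!)^\circ$. We also have $f_!\dashv f^*$, and right adjoints are unique up to isomorphism; since the hom-categories are posets, this uniqueness becomes equality. That yields (1), $(f_!)^\circ=f^*$. For (2), I apply the involution to (1). Because $(-)^\circ$ is involutive, $(f^*)^\circ=((f_!)^\circ)^\circ=f_!$. This step is immediate once (1) holds.

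The main obstacle is making sure the cited Carboni--Walters lemma applies verbatim. This requires two things. First, the loose bicategory must genuinely be a bicategory of relations: locally posetal, cartesian bicategory, with every object discrete (the Frobenius/discreteness axiom). Second, the involution $(-)^\circ$ used in the paper must be the one built from the compact structure in the proof of \cite[Theorem 2.4]{carboni1987}. Both follow from the definition of `double category of relations' in \cite{lambert2022}, so I would state that translation explicitly.

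If I wanted to avoid the citation, I would argue directly. Write $(f_!)^\circ$ as the composite $(\id_y\otimes \eta_x)\odot(\id_y\otimes f_!\otimes\id_x)\odot(\epsilon_y\otimes \id_x)$ using the compact structure. Here $\eta_x=(\Delta_x)_!\odot !^*$ up to the order conventions, and similarly for $\epsilon_y$. Then I would rewrite $\id_y\otimes f_!$ as a restriction along $1\times f$, slide the companions and conjoints past each other using the base change identities, and collapse the composite to $f^*$ using discreteness, $\Delta^*\odot\Delta_!\cong \id$. This computation is the part I expect to be most fiddly.
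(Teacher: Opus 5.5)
Your main argument is correct and is the paper's own: $f_!\dashv f^*$ in any equipment makes $f_!$ a map, \cite[Lemma 2.5]{carboni1987} gives $f_!\dashv (f_!)^\circ$, uniqueness of right adjoints in the locally posetal loose bicategory yields (1), and (2) follows from $(g^\circ)^\circ=g$, a step the paper leaves implicit. One caution on your optional direct computation: ``discreteness'' here means the Frobenius law $(\Delta_!\times\id)\odot(\id\times\Delta^*)=\Delta^*\odot\Delta_!$, not the identity you quote (in the paper's diagrammatic order, $\Delta_!\odot\Delta^*=\id_x$); that identity holds in every locally posetal cartesian equipment, including monotone relations between posets, where the same formula gives $(f_!)^\circ\neq f^*$, so the collapse step must actually use Frobenius.
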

\begin{proof}
  Start with the usual equipment adjunction $f_!\dashv f^*$ 
  \cite[\S 5]{shulman2008}. The underlying bicategory of any `double category of 
  relations' is a `bicategory of relations' \cite{lambert2022} and bicategorical 
  adjoints are unique up to isomorphism.
\end{proof}

The proof of the modular law (cf. \cite{freyd1990}) is now an application. It is 
phrased here in the manner needed in proving Frobenius.

\begin{prop}[modular laws] \label{prop:modular-laws}
  In any `double category of relations', the two inequalities 
    \begin{enumerate}
      \item $(f^*\odot m)\wedge n \leq f^*\odot (m\wedge f_!\odot n)$
      \item $(p\odot f_!)\wedge q \leq (p\wedge q\odot f^*)\odot f_!$
    \end{enumerate}
  each hold.
\end{prop}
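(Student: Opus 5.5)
Both inequalities are instances of the modular law for a `bicategory of relations' in the sense of \cite{carboni1987} (equivalently, of an allegory \cite{freyd1990}): for any proarrows $a,b,c$ of compatible types, $(a\odot b)\wedge c\leq a\odot(b\wedge a^\circ\odot c)$, together with its mirror image $(a\odot b)\wedge c\leq (a\wedge c\odot b^\circ)\odot b$. The plan is to pass to the underlying bicategory, which is a `bicategory of relations' by \cite{lambert2022}, derive the modular law there from discreteness, and then substitute $a=f^*$ in (1) and $b=f_!$ in (2). The preceding lemma gives $(f^*)^\circ=f_!$ and $(f_!)^\circ=f^*$, and these turn the general law into exactly the stated forms. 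Since everything is locally posetal, it suffices to exhibit the relevant $2$-cells; no coherence checking is needed.

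First, confirm that the local meet $\wedge$ of \cref{section:local-products-exponentiability}, namely $m\wedge n\cong\Delta_!\odot(m\times n)\odot\Delta^*$, agrees with the meet used in \cite{carboni1987}. In Carboni--Walters the meet is defined by composing with the comultiplication $\Delta_!$ and the multiplication $\Delta^*$ of the Frobenius structure on each object. This identification is precisely how \cite{lambert2022} shows that the underlying bicategory is a bicategory of relations, so I would simply cite it.

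Second, prove the modular law for a bicategory of relations. The intended route is to unpack $(a\odot b)\wedge c=\Delta_!\odot((a\odot b)\times c)\odot\Delta^*$. One then uses the discreteness (Frobenius) equation $\Delta^*\odot\Delta_!=(\Delta_!\times 1)\odot(1\times\Delta^*)$, together with the fact that $\Delta_!$ and $\Delta^*$ are, respectively, a map and its right adjoint in a cartesian bicategory; this yields the inequalities $1\leq\Delta^*\odot\Delta_!$ and $\Delta_!\odot\Delta^*\leq 1$. With these, the middle copy of $y$ is duplicated, and one copy is routed back through $a$ by the compact closed unit and counit, which is exactly the definition of $a^\circ$ from \cite[Theorem 2.4]{carboni1987}. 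A single application of the counit inequality $\Delta_!\odot\Delta^*\leq 1$ discards the extra copy and produces $a\odot(b\wedge a^\circ\odot c)$. The second law follows by the same argument with the roles reversed, or alternatively by applying the involution $(-)^\circ$ to the first law. For that alternative I would use that $(-)^\circ$ reverses composition and, by functoriality in the compact structure, preserves $\wedge$.

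I expect the string-diagram manipulation in the second step to be the main obstacle, specifically keeping track of which inequality direction the lax comonoid structure provides. If it becomes messy, I would fall back on citing \cite[Theorem 2.4, Corollary 2.6]{carboni1987}, or Freyd--Scedrov's result that bicategories of relations are allegories, where modularity is known to hold. After that, the substitutions $a=f^*$ (so $a^\circ=f_!$) in (1) and $b=f_!$ (so $b^\circ=f^*$) in (2) finish the proof.
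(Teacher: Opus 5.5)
Your reduction is correct. With $a=f^*$ and $a^\circ=f_!$, the law $(a\odot b)\wedge c\le a\odot(b\wedge a^\circ\odot c)$ is (1) verbatim. Inequality (2) follows either from the mirror law with $b=f_!$, or by conjugating (1) with $(-)^\circ$, which is an order-isomorphism of hom-posets and so preserves $\wedge$. If you simply cite the modular law for bicategories of relations, you have a complete proof. It is the paper's argument run at full generality. The paper cites from Carboni--Walters' Theorem 2.4 only the diagonal inequality $\Delta_!\odot(f^*\times 1)\le f^*\odot\Delta_!\odot(1\times f_!)$ and its mirror, and then does a short interchange computation. That computation is exactly the proof of the modular law restricted to $a=f^*$.

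The self-contained derivation in your second step, however, would not go through with the ingredients you name. The Frobenius equation, the unit and counit of $\Delta_!\dashv\Delta^*$, and the compact-closed definition of $a^\circ$ do not imply the modular law. Consider $[0,1]$-valued matrices with composition by suprema of products. All three hold there, and $\Delta_!\odot(m\times n)\odot\Delta^*$ is the pointwise product. On one-point sets, $a=1/2$ and $b=c=1$ give $(a\odot b)\wedge c=1/2\not\le 1/4=a\odot(b\wedge a^\circ\odot c)$.

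The missing idea is that every proarrow is a lax comonoid homomorphism, $b\odot\Delta_!\le\Delta_!\odot(b\times b)$. In the double setting this is the cell $\Delta_b\colon b\Rightarrow b\times b$ of the tight transformation $\Delta$, slid to a globular cell. Its compact transpose for $b=a^\circ$ is $\Delta_!\odot(a\times 1)\le a\odot\Delta_!\odot(1\times a^\circ)$. For $a=f^*$ this is the paper's first cited inequality. Interchange then gives $(a\odot b)\wedge c=\Delta_!\odot(a\times 1)\odot(b\times c)\odot\Delta^*\le a\odot\Delta_!\odot(b\times(a^\circ\odot c))\odot\Delta^*=a\odot(b\wedge a^\circ\odot c)$. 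No duplicating or discarding step is needed.

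Note also the direction of your adjunction inequalities. In the diagrammatic order you use for $\wedge$, the unit and counit are $1\le\Delta_!\odot\Delta^*$ and $\Delta^*\odot\Delta_!\le 1$. Your $1\le\Delta^*\odot\Delta_!$ already fails in $\Rel$.
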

\begin{proof}
  Two inequalities follow directly from \cite[Theorem 2.4]{carboni1987}. These 
  are 
    \begin{enumerate}
      \item $\Delta_!\odot (f^*\times 1) \leq f^*\odot\Delta_!\odot (1\times f_!)$
      \item $(f_!\times 1)\odot \Delta^*\leq (1\times f^*)\odot \Delta^*\odot f_!$
    \end{enumerate}
  using $(f^*)^\circ = f_!$ as in the lemma above for the first. The two modular 
  laws are direct applications. For the first one, we have 
    \begin{align*}
      (f^*\odot m)\wedge n &= \Delta_!\odot (f^*\odot m\times n)\odot\Delta^* \\
                           &= \Delta_!\odot (f^*\times 1\odot m\times n)\odot\Delta^* \\
                           &\leq f^*\odot\Delta_!\odot (1\times f_! \odot m\times n)\odot\Delta^* \\
                           &= f^*\odot\Delta_!\odot (m\times f_! \odot n)\odot\Delta^* \\
                           &= f^*\odot (m\wedge f_!\odot n)
    \end{align*}
  using the interaction of $\odot$ and $\times$ as well as the definition of 
  local products. The other inequality is analogous.
\end{proof}

\begin{con}[Frobenius Comparison Cell]\label{construction:Frob-comparison-cell}
  Let $\dbl D$ denote any cartesian equipment. Local products make sense in this 
  context. Let $f\colon x\to z$ and $g\colon y\to w$ denote morphisms. First 
  form the coproduct extension as the third step in the sequence:
    \begin{equation*}
      \begin{tikzcd}
        x & y \\
        z & w
        \arrow[""{name=0, anchor=center, inner sep=0}, "{f_!pg^*}"{inner sep=.8ex}, "\shortmid"{marking}, from=1-1, to=1-2]
        \arrow["f"', from=1-1, to=2-1]
        \arrow["g", from=1-2, to=2-2]
        \arrow[""{name=1, anchor=center, inner sep=0}, "p"'{inner sep=.8ex}, "\shortmid"{marking}, from=2-1, to=2-2]
        \arrow["{\mathrm{res}}"{description}, draw=none, from=0, to=1]
      \end{tikzcd}\quad\leadsto\quad
      \begin{tikzcd}
        x && y \\
        {x\times x} && {y\times y}
        \arrow[""{name=0, anchor=center, inner sep=0}, "{m\wedge f_!pg^*}"{inner sep=.8ex}, "\shortmid"{marking}, from=1-1, to=1-3]
        \arrow["{\Delta_x}"', from=1-1, to=2-1]
        \arrow["{\Delta_y}", from=1-3, to=2-3]
        \arrow[""{name=1, anchor=center, inner sep=0}, "{m\times f_!pg^*}"'{inner sep=.8ex}, "\shortmid"{marking}, from=2-1, to=2-3]
        \arrow["{\mathrm{res}}"{description}, draw=none, from=0, to=1]
      \end{tikzcd}\quad\leadsto\quad
      \begin{tikzcd}
        x && y \\
        z && w
        \arrow[""{name=0, anchor=center, inner sep=0}, "{m\wedge f_!pg^*}"{inner sep=.8ex}, "\shortmid"{marking}, from=1-1, to=1-3]
        \arrow["f"', from=1-1, to=2-1]
        \arrow["g", from=1-3, to=2-3]
        \arrow[""{name=1, anchor=center, inner sep=0}, "{\coprod_{f,g}(m\wedge f_!pg^*)}"'{inner sep=.8ex}, "\shortmid"{marking}, from=2-1, to=2-3]
        \arrow["{\mathrm{ext}}"{description}, draw=none, from=0, to=1]
      \end{tikzcd}.
    \end{equation*}
  Note that we are suppressing some of the loose composition notation for 
  readability. Now, on the other hand, form the local product as the second step 
  in the sequence:
    \begin{equation*}
      \begin{tikzcd}
        x & y \\
        z & w
        \arrow[""{name=0, anchor=center, inner sep=0}, "m"{inner sep=.8ex}, "\shortmid"{marking}, from=1-1, to=1-2]
        \arrow["f"', from=1-1, to=2-1]
        \arrow["g", from=1-2, to=2-2]
        \arrow[""{name=1, anchor=center, inner sep=0}, "{\coprod_{f,g}m}"'{inner sep=.8ex}, "\shortmid"{marking}, from=2-1, to=2-2]
        \arrow["{\mathrm{ext}}"{description}, draw=none, from=0, to=1]
      \end{tikzcd}\qquad\leadsto\qquad 
      \begin{tikzcd}
        z && w \\
        {z\times z} && {w\times w}
        \arrow[""{name=0, anchor=center, inner sep=0}, "{(\coprod_{f,g}m)\wedge p}"{inner sep=.8ex}, "\shortmid"{marking}, from=1-1, to=1-3]
        \arrow["{\Delta_z}"', from=1-1, to=2-1]
        \arrow["{\Delta_w}", from=1-3, to=2-3]
        \arrow[""{name=1, anchor=center, inner sep=0}, "{(\coprod_{f,g}m)\times p}"'{inner sep=.8ex}, "\shortmid"{marking}, from=2-1, to=2-3]
        \arrow["{\mathrm{res}}"{description}, draw=none, from=0, to=1]
      \end{tikzcd}
    \end{equation*}
  By the universal property of the local product immediately above, there is 
  thus a canonical comparison cell 
    \begin{equation*}
      m\wedge f_!pg^* \Rightarrow (\coprod_{f,g}m)\wedge p
    \end{equation*}
  and likewise a comparison cell 
    \begin{equation*}
      \coprod_{f,g}(m\wedge f_!pg^*) \Rightarrow (\coprod_{f,g}m)\wedge p
    \end{equation*}
  by the universal property of the extension cell defining the coproduct in the 
  preantepenultimate display. This is the required \emph{Frobenius comparison 
  cell}. Note that it exists even if, as assumed, $\dbl D$ is merely a cartesian 
  equipment.
\end{con}

\begin{prop} \label{prop:Frobenius-reciprocity}
  Any `double category of relations' satisfies the Frobenius reciprocity Law. 
  That is, the canonical cell 
    \begin{equation*}
      \coprod_{f,g}(m\wedge f_!pg^*) \leq (\coprod_{f,g}m)\wedge p
    \end{equation*}
  described above is invertible.
\end{prop}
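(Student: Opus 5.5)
Since a `double category of relations' is locally posetal, the comparison cell of \cref{construction:Frob-comparison-cell} already gives the inequality $\coprod_{f,g}(m\wedge f_!pg^*)\leq(\coprod_{f,g}m)\wedge p$. Invertibility therefore reduces to proving the reverse inequality $(\coprod_{f,g}m)\wedge p\leq\coprod_{f,g}(m\wedge f_!pg^*)$. The plan is to derive this from the modular laws of \cref{prop:modular-laws}, applied once on each side, after rewriting the extension in terms of companions and conjoints.

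First I rewrite both sides by the equipment formulas \cite[Theorem 4.1]{shulman2008}. Up to the paper's notational conventions, $\coprod_{f,g}(m)\cong f^*\odot m\odot g_!$, and the restriction is $f_!\odot p\odot g^*$. The inequality to prove then reads
\begin{equation*}
(f^*\odot m\odot g_!)\wedge p\;\leq\; f^*\odot\bigl(m\wedge (f_!\odot p\odot g^*)\bigr)\odot g_!.
\end{equation*}
I will check once that the sidedness of companion versus conjoint matches the orientation used in \cref{prop:modular-laws}. If it does not, I will apply the involution $(-)^\circ$ together with the lemma $(f_!)^\circ=f^*$, $(f^*)^\circ=f_!$.

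Next I apply modular law (1) with the proarrow $m\odot g_!$ in place of $m$ and $n=p$:
\begin{equation*}
(f^*\odot (m\odot g_!))\wedge p\leq f^*\odot\bigl((m\odot g_!)\wedge (f_!\odot p)\bigr).
\end{equation*}
I then apply modular law (2) inside, with $g$ in place of $f$, $m$ in place of $p$, and $f_!\odot p$ in place of $q$:
\begin{equation*}
(m\odot g_!)\wedge(f_!\odot p)\leq \bigl(m\wedge (f_!\odot p\odot g^*)\bigr)\odot g_!.
\end{equation*}
Loose composition with $f^*$ is monotone, being a functor between posets. Chaining the two inequalities therefore gives exactly the desired bound, and antisymmetry in the posetal hom yields the isomorphism.

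The main obstacle is the bookkeeping of variance. I must check that the companion/conjoint orientation in the extension formula agrees with the orientation in the modular laws; if needed, I will symmetrize the laws via $(-)^\circ$, using that $(-)^\circ$ reverses composition and preserves $\wedge$ (it is an involution of the locally posetal bicategory of relations). A secondary point is that the modular laws as proved carry implicit bracketings of $\odot$. Associativity up to iso suffices here, because all homs are posets and no coherence issues arise.
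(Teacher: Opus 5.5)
Your proposal is correct and is essentially the paper's own proof. The paper likewise rewrites $\coprod_{f,g}m$ as $f^*\odot m\odot g_!$ and the restriction as $f_!\odot p\odot g^*$, then applies modular law (1) and then modular law (2) inside $f^*\odot(-)$, and concludes by antisymmetry. The orientation check you flag goes through without $(-)^\circ$. Your intermediate term $(m\odot g_!)\wedge(f_!\odot p)$ is the correct one; the paper's display writes $m\odot g^*$ at that step, which is evidently a typo.
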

\begin{proof}
  For the proof, we produce an reverse inequality. Uniqueness then forces the 
  inequality above to be an equality. Starting from the left side and using the 
  construction of extensions and both modular laws (\cref{prop:modular-laws}), 
  we have 
    \begin{align*}
      (\coprod_{f,g}m)\wedge p &=(f^*\odot m \odot g_!)\wedge p \\
                               &\leq f^*\odot (m\odot g^*\wedge f_!\odot p) \\
                               &\leq f^*\odot (m \wedge (f_!\odot p\odot g^*))\odot g_! \\
                               &=\coprod_{f,g}(m\wedge f_!pg^*) 
    \end{align*}
  as required.
\end{proof}

\begin{cor}
  Any `double category of relations', viewed as an equipment, is a regular 
  fibration \cite[Definition 4.2.1]{jacobs1999}, hence a fibrational model of 
  regular logic.
\end{cor}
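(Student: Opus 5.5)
The plan is to check the clauses of Jacobs' definition of a regular fibration \cite[Definition 4.2.1]{jacobs1999} one at a time, using the source-target fibration $\langle\src,\tgt\rangle\colon\dbl D_1\to\dbl D_0\times\dbl D_0$. Jacobs asks for four things: a base category with finite products, fibred finite products, existential quantification (left adjoints to substitution along the relevant projections/maps) satisfying Beck-Chevalley, and Frobenius reciprocity. Most of these are already on the table.

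First, the base. A `double category of relations' is in particular a cartesian equipment, so $\dbl D_0$ has finite products. Hence the base $\dbl D_0\times\dbl D_0$ does too.

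Second, fibred finite products. Each fibre $\dbl D(x,y)$ has the local terminal $1_{x,y}$ and the local products $m\wedge n$ constructed in \cref{section:local-products-exponentiability}. These must be preserved by reindexing $(f,g)^*$. Since both are defined as restrictions (along $!$ or along diagonals), and restrictions compose up to canonical isomorphism, I would check that $(f,g)^*(m\times n)$ restricted along $\Delta$ agrees with $(f\times f,g\times g)^*$ applied after $\Delta$. This reduces to the naturality square $\Delta\circ f=(f\times f)\circ\Delta$ together with pseudofunctoriality of restriction.

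Third, existential quantification. Left adjoints $\coprod_{f,g}\dashv(f,g)^*$ exist for every pair of maps by the equipment structure (\cref{eqn:left-adjoint-to-restriction}). Beck-Chevalley along pullback squares is the step needing care. \cref{lemma:beck-chevalley} gives it under strong tabulators, via unit-purity and \cite{hoshino2025}. If the corollary is meant without strong tabulators, I would instead restrict attention to the pullbacks Jacobs actually requires, namely those of product projections against arbitrary maps. For these, internal Beck-Chevalley can be verified directly: the relevant square is $f\times 1$ against $\pi$, and the required isomorphism of companion/conjoint composites follows from the cartesian structure of $\odot$, since $\times$ is a double functor. I expect this to be the main obstacle. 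The likely resolution is either to assume strong tabulators implicitly, as \cref{cor:equality} effectively does, or to carry out this projection-only check.

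Fourth, Frobenius reciprocity. This is exactly \cref{prop:Frobenius-reciprocity}, specialized to the left adjoints $\coprod_{f,g}$. Once these four clauses are assembled, the fibration is regular, and the standard soundness result \cite[\S 4.2]{jacobs1999} yields a fibrational model of regular logic.
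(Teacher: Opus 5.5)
Your outline matches the paper's argument in substance: coproducts from the equipment, local products, and \cref{prop:Frobenius-reciprocity}. The gap is that your list of Jacobs' clauses is incomplete. A regular fibration in the sense of \cite[Definition 4.2.1]{jacobs1999} must also have \emph{equality}. That means left adjoints $\mathrm{Eq}$ to contraction along the diagonals $\delta(I,X)\colon I\times X\to I\times X\times X$, satisfying Beck-Chevalley along the squares formed with $u\times 1$ and $u\times 1\times 1$, and satisfying Frobenius. This is precisely the clause the paper's proof singles out (via \cref{cor:equality}). Your Beck-Chevalley step never reaches it, since you explicitly restrict to projection squares as ``the pullbacks Jacobs actually requires''. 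Your own method repairs this. In the base $\dbl D_0\times\dbl D_0$ the diagonals are pairs $(\delta,\delta)$ with $\delta=1\times\Delta$. The left adjoints $\coprod_{\delta,\delta}$ come from the equipment. Frobenius for them is the special case of \cref{prop:Frobenius-reciprocity} with $f$ and $g$ diagonals. Beck-Chevalley reduces to the same product computation as for projections, e.g.\ $(\mathrm{id}\times\Delta^*)\odot(u_!\times\mathrm{id})\cong u_!\times\Delta^*\cong(u_!\times\mathrm{id})\odot(\mathrm{id}\times\Delta^*)$, together with its conjoint-side analogue.

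On Beck-Chevalley, drop the strong-tabulator option. The statement assumes no tabulators, so invoking \cref{lemma:beck-chevalley} would prove a weaker result. The direct check you sketch is the right one, and it goes through in any cartesian equipment. Since $\times$ is a pseudo double functor, it preserves companions, conjoints and loose composites. Hence $\pi^*\odot(u\times 1)_!\cong(\mathrm{id}\times{!}^*)\odot(u_!\times\mathrm{id})\cong u_!\times{!}^*\cong(u_!\times\mathrm{id})\odot(\mathrm{id}\times{!}^*)\cong u_!\odot\pi^*$, and dually on the conjoint side. Because the fibres $\dbl D(x,y)$ are posets, the canonical comparison cell between these isomorphic proarrows is then automatically invertible. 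Carried out for both projections and diagonals, this fills a step the paper leaves implicit: its appeal to \cref{cor:equality} takes Beck-Chevalley as a hypothesis that the corollary itself does not state. For fibred finite products you can skip the computation: each $(f,g)^*$ has the left adjoint $\coprod_{f,g}$, so it preserves $\top$ and $\wedge$.
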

\begin{proof}
  It needs only to be observed that such a double olog has equality in the sense 
  of \cite[\S 3.4]{jacobs1999} since it has \emph{all} coproducts satisfying 
  Frobenius reciprocity by the above. This was noted above in 
  \cref{cor:equality}.
\end{proof}

\begin{rem}[Query Optimization] \label{remark:Frobenius-Optimization}
  Frobenius reciprocity is a \emph{join pushdown} optimization rewrite rule, 
  saying that a extension following an expensive join can be equivalently 
  executed by first collapsing using an extension and then performing a smaller 
  join. This has the effect of restricting to a smaller table via a collapse 
  operation, hence surveying a smaller totality, \emph{before} computing the 
  product. That is, the right hand side of the Frobenius reciprocity identity in 
  \cref{prop:Frobenius-reciprocity} above is the optimized query and the result 
  says it produces the same result. 
\end{rem}

\begin{exa} \label{example:frobenius}
  We illustrate this with a simple example. This scenario is one of 
  \emph{abstraction} and \emph{querying metadata}. We have a surjective function 
  associating to each of several individuals his or her class:
    \begin{equation*}
      \begin{tabular}{| l | l | }
          \hline\multicolumn{2}{| c |}{\bf Belongs to}\\
          \hline {\bf Individual }&{\bf Class}\\
          \hline Belrand & spellsword \\
          \hline Farcas & warrior \\
          \hline Illia & mage \\
          \hline J'zargo & mage \\
          \hline Marcurio & mage \\
          \hline Teldryn Sero & spellsword \\
          \hline
      \end{tabular}. 
    \end{equation*}
  Suppose we have the following subsets
    \begin{equation*}
      S = \lbrace \text{Marcurio}, \text{ Illia}, \text{ Teldryn Sero}\rbrace
      \qquad T = \lbrace \text{spellsword}, \text{ warrior}\rbrace
    \end{equation*}
  and suppose we wish to know which of the classes on the latter list are 
  represented by the individuals on the former list. Of course the example is 
  simple enough that we can just look at the table. But for large datasets a 
  process is of course required. A naive way to proceed is see who on the list 
  falls into any of the specified classes and then just take the class 
  information. That is, more formally, restrict $T$ and intersect with $S$ to 
  see who on the list falls into the classes; and then to project back to the 
  codomain to see which classes we ended up with:
    \begin{align*}
      &\exists_{\text{Belongs}}(S\wedge (\text{Belongs}^*(T)))\\ 
        = \;&\exists_{\text{Belongs}}(\lbrace \text{Marcurio}, \text{ Illia}, \text{ 
        Teldryn Sero}\rbrace\cap \lbrace \text{Belrand}, \text{ Farcas}, \text{ Teldryn Sero}\rbrace )\\
        = \;&\exists_{\text{Belongs}}(\lbrace \text{Teldryn Sero}\rbrace) \\
        = \;&\lbrace \text{spellsword}\rbrace.
    \end{align*}
  Note that three operations are required. The restriction and intersection are 
  where most of the computation is done. But since we are working in relations 
  where Frobenius reciprocity holds, the exact same effect is had by 
  \emph{first} abstracting or aggregating the metadata and \text{then} 
  intersecting:
    \begin{equation*}
      \exists_{\text{Belongs}}(S)\wedge T = \lbrace\text{spellsword}\rbrace\cap\lbrace \text{spellsword},\text{ warrior}\rbrace = \lbrace\text{spellsword}\rbrace.
    \end{equation*}
  Only two operations are required. The point is that Frobenius reciprocity 
  formalizes this relationship for double ologs and the functorial and 
  fibrational semantics of data instances control the computation of the query 
  in the receiving data structure. Notably, the proposition above shows that it 
  emerges as a derivable \emph{property} of a `double categories of relations'.
\end{exa}

\section{Cartesian Closedness via Products \& Tabulators}
\label{section:cartesian-closedness}

The first result concerns exponentiability. This is an argument that local 
products with identity proarrows are essentially restrictions along the relevant 
diagonal. In this sense they are somewhat like identity predicates 
\cite[\S 3.4]{jacobs1999} and in paricular are left adjoints. 

\begin{lem} \label{lemma:identity-proarrow-exponentiable}
  Any identity proarrow in a $\prod$-double olog is exponentiable.
\end{lem}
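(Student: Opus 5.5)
Our plan is to show that, for $m\colon x\proto y$ (to be careful, take $\id_x\colon x\proto x$, so $m\colon x\proto x$), the functor $\id_x\wedge -\colon\dbl D(x,x)\to\dbl D(x,x)$ is isomorphic to a restriction along the diagonal followed by an extension along the diagonal. The exponential is then the composite of the corresponding right adjoints, namely a restriction followed by a dependent product, both of which exist in a $\prod$-double olog. Concretely, we aim to prove the formula
  \begin{equation*}
    \id_x\wedge m \;\cong\; \coprod_{\Delta}\,\Delta^*(m)\;\cong\;\Delta_!\odot\big(\Delta^*\text{-restriction of } m\big)\odot\Delta^*,
  \end{equation*}
where $\Delta\colon x\to x$ is shorthand for restricting $m$ along $(1_x,1_x)$ after pulling back through the identity. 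More precisely, we view $\id_x$ as the extension of $\id_{\id x}$ along the identity and apply Frobenius reciprocity (\cref{prop:Frobenius-reciprocity}) with $f=g=1_x$. A less trivial route is needed, so we use the strong-tabulator description: $\id_x=(1_x)_!\odot(1_x)^*$, whose tabulator is $x$ with $d=c=1_x$.

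The cleaner formulation is to write, by Frobenius applied to $f=g=\delta$ where $\delta$ is the diagonal tabulating the identity viewed in $\dbl D(x,x)$, that $m\wedge\id_x\cong m\wedge\coprod_{1,1}(\id)$. The key step uses the discreteness axiom of a `double category of relations' (\cite{lambert2022}): $\Delta_x$ is a map with $\Delta^*\odot\Delta_!\cong\id_x$ and $\Delta_!\odot\Delta^*\leq \id\times\id$. From this we get $m\wedge\id_x \cong \Delta_!\odot(m\times\id_x)\odot\Delta^*$, and using the modular laws (\cref{prop:modular-laws}) we identify this with $\id_x\wedge m\cong (\text{restriction of } m \text{ along the diagonal-like cell})$. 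In the end we expect an expression of the form $\id_x\wedge m\cong\coprod_{u,v}(u,v)^*(m)$ for suitable arrows $u,v$ out of the tabulator of $\id_x$ (which is $x$ itself with both legs identity, or more usefully $\Delta$ into $x\times x$ after passing to one-sided proarrows $x\times x\proto 1$ via compactness).

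Given this, the right adjoint is immediate: since $\coprod_{u,v}\dashv(u,v)^*\dashv\prod_{u,v}$, the composite $\coprod_{u,v}(u,v)^*$ has right adjoint $\prod_{u,v}(u,v)^*$, so we set $n^{\id_x}:=\prod_{u,v}(u,v)^*(n)$, and naturality is automatic. Local posetality means that we need only an order-isomorphism, which simplifies all coherence issues.

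The main obstacle is the identification of $\id_x\wedge -$ with $\coprod(u,v)^*$. Frobenius supplies $\coprod_{u,v}((u,v)^*m)\cong m\wedge\coprod_{u,v}(\top)$ only if $\coprod_{u,v}$ of the top element is $\id_x$; so we must check that the extension of the local terminal along the tabulator legs of $\id_x$ recovers $\id_x$. This is exactly strongness of that tabulator, which we would derive from discreteness (and unit-purity via \cite{hoshino2025}) rather than assume. Once $\coprod_{u,v}1\cong\id_x$, Frobenius gives $m\wedge\id_x\cong\coprod_{u,v}(u,v)^*(m)$, and we conclude as above.
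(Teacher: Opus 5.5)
\textbf{There is a genuine gap at the step you yourself flag as the main obstacle.}

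\emph{The required identity fails in the setting you use.} In the two-sided hom $\dbl D(x,x)$ there are no arrows $u,v$ with $\coprod_{u,v}(\top)\cong\id_x$. In $\Rel$ the extension $\coprod_{u,v}(\top)$ is the rectangle $\mathrm{im}(u)\times\mathrm{im}(v)$, which is never the diagonal once $x$ has two elements. For the tabulator of $\id_x$ you name (apex $x$, both legs $1_x$) you simply get $\coprod_{1,1}(\top)=\top$.

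\emph{Strongness cannot supply it.} Strongness says that extending $\id_T$, not $\top$, along the legs recovers the proarrow; here that is the triviality $\coprod_{1,1}(\id_x)=\id_x$. Feeding $\id_T$ into Frobenius only gives $\id_x\wedge m\cong\coprod_{d,c}(\id_T\wedge(d,c)^*m)$. That is circular: it is exactly the reduction \cref{theo:locally-cartesian-closed} performs, and that theorem needs the present lemma to finish.

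\emph{The discreteness citation is off.} The facts about $\Delta$ you list are its adjunction and monicity properties. The discreteness axiom proper is the Frobenius-type equation $(\Delta_!\times\id)\odot(\id\times\Delta^*)=\Delta^*\odot\Delta_!$, which is what the paper's computation actually uses.

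\textbf{The missing idea is the one you mention only in a parenthetical.} Use compactness to move the diagonal out of the proarrow and into a tight map, and apply Frobenius there.
\begin{enumerate}
\item Transpose to $\dbl D(x\times x,1)$ and take $u=\Delta_x$, $v=1_1$. Then $\widehat{\id_x}=\Delta^*\odot x_!=\coprod_{\Delta,1}(\top_{x,1})$ holds directly from the transpose formula, with no tabulators or unit-purity involved.
\item \cref{prop:Frobenius-reciprocity} with $m=\top_{x,1}$ then gives $\widehat{\id_x}\wedge q\cong\coprod_{\Delta,1}(\Delta,1)^*(q)$.
\item Transposition $\dbl D(x,x)\cong\dbl D(x\times x,1)$ is an order isomorphism, so it preserves meets. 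Hence $\widehat{p\wedge\id}\cong\coprod_{\Delta,1}(\Delta,1)^*(\hat p)$.
\item Therefore $\id\supset n$ is the transpose back of $\prod_{\Delta,1}(\Delta,1)^*(\hat n)$.
\end{enumerate}

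\textbf{Comparison with the paper.} So repaired, your route is a legitimate variant of the paper's. The paper obtains the same identity $\widehat{p\wedge\id}=\coprod_{\Delta,1}(\Delta,1)^*(\hat p)$ by expanding the transpose directly, using coassociativity of $\Delta$, functoriality of $\times$ and the discreteness axiom. It then runs the same chain of adjunctions and transposes back. You would instead get the identity as an instance of Frobenius reciprocity, which is itself a consequence of the modular laws and hence of discreteness.
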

\begin{proof}
  The compact closed transpose of $\id\wedge p$ to a proarrow 
  $x\times x\proto 1$ is the composite
    \begin{equation*}
      \widehat{p\wedge\id}  = ((p\wedge\id) \times \id) \odot\Delta^*\odot x_!
    \end{equation*}
  as in \cite[Theorem 2.4]{carboni1987}. Now, this can be rewritten into a more 
  convenient form via the following calculation:
    \begin{align*}
      \widehat{p\wedge\id} &= (\Delta_!\times \id)\odot (p\times\id\times\id)\odot(\Delta^*\times\id)\odot\Delta^*\odot x_! \qquad &\text{(\text{Constr. Local Product})}\\
      &=(\Delta_!\times \id)\odot (p\times\id\times\id)\odot(\id\times\Delta^*)\odot\Delta^*\odot x_! \qquad &\text{(\text{Comonoid Axiom})}\\
      &=(\Delta_!\times \id)\odot (\id\times\Delta^*)\odot (p\times\id)\odot\Delta^*\odot x_! \qquad &\text{(\text{Functoriality of $\times$})}\\
      &=\Delta^*\odot \Delta_!\odot (p\times\id) \odot\Delta^*\odot x_! \qquad &\text{(\text{Discreteness})}
    \end{align*}
  which shows that the compact closed transpose of the local product is 
  precisely the following extension following a restriction:
    \begin{equation} \label{eqn:transpose-local-product-with-identity}
      \widehat{p\wedge\id} = \coprod_{\Delta,1} (\Delta,1)^*((p\times \id)\odot\Delta\odot x_!).
    \end{equation}
  Now, using this identity, we have a bijection of cells in the following 
  sequence:
    \begin{align*}
      \begin{tikzcd}[ampersand replacement=\&]
        x \& x \\
        x \& x
        \arrow[""{name=0, anchor=center, inner sep=0}, "{p\wedge\id}"{inner sep=.8ex}, "\shortmid"{marking}, from=1-1, to=1-2]
        \arrow[equals, from=1-1, to=2-1]
        \arrow[equals, from=1-2, to=2-2]
        \arrow[""{name=1, anchor=center, inner sep=0}, "n"'{inner sep=.8ex}, "\shortmid"{marking}, from=2-1, to=2-2]
        \arrow["\leq"{description}, draw=none, from=0, to=1]
      \end{tikzcd} \quad &\leftrightarrow\quad 
      \begin{tikzcd}[ampersand replacement=\&]
        {x\times x} \&\& 1 \\
        {x\times x} \&\& 1
        \arrow[""{name=0, anchor=center, inner sep=0}, "{\widehat{p\wedge\id}}"{inner sep=.8ex}, "\shortmid"{marking}, from=1-1, to=1-3]
        \arrow[equals, from=1-1, to=2-1]
        \arrow[equals, from=1-3, to=2-3]
        \arrow[""{name=1, anchor=center, inner sep=0}, "{\widehat n}"'{inner sep=.8ex}, "\shortmid"{marking}, from=2-1, to=2-3]
        \arrow["\leq"{description}, draw=none, from=0, to=1]
      \end{tikzcd}\qquad &(\text{Compactness})\\
      & = \quad 
      \begin{tikzcd}[ampersand replacement=\&]
        {x\times x} \&\&\&\& 1 \\
        {x\times x} \&\&\&\& 1
        \arrow[""{name=0, anchor=center, inner sep=0}, "{\coprod_{\Delta,1} (\Delta,1)^*((p\times \id)\odot\Delta\odot x_!)}"{inner sep=.8ex}, "\shortmid"{marking}, from=1-1, to=1-5]
        \arrow[equals, from=1-1, to=2-1]
        \arrow[equals, from=1-5, to=2-5]
        \arrow[""{name=1, anchor=center, inner sep=0}, "{\widehat n}"'{inner sep=.8ex}, "\shortmid"{marking}, from=2-1, to=2-5]
        \arrow["\leq"{description}, draw=none, from=0, to=1]
      \end{tikzcd} \qquad &(\text{\cref{eqn:transpose-local-product-with-identity}})\\ 
      &\leftrightarrow \quad
      \begin{tikzcd}[ampersand replacement=\&]
        {x\times x} \&\&\& 1 \\
        {x\times x} \&\&\& 1
        \arrow[""{name=0, anchor=center, inner sep=0}, "{(\Delta,1)^*((p\times \id)\odot\Delta\odot x_!)}"{inner sep=.8ex}, "\shortmid"{marking}, from=1-1, to=1-4]
        \arrow[equals, from=1-1, to=2-1]
        \arrow[equals, from=1-4, to=2-4]
        \arrow[""{name=1, anchor=center, inner sep=0}, "{(\Delta,1)^*(\widehat n)}"'{inner sep=.8ex}, "\shortmid"{marking}, from=2-1, to=2-4]
        \arrow["\leq"{description}, draw=none, from=0, to=1]
      \end{tikzcd} \qquad &(\text{Right Adjoint})\\
      &\leftrightarrow \quad
      \begin{tikzcd}[ampersand replacement=\&]
        {x\times x} \&\&\& 1 \\
        {x\times x} \&\&\& 1
        \arrow[""{name=0, anchor=center, inner sep=0}, "{((p\times \id)\odot\Delta\odot x_!)}"{inner sep=.8ex}, "\shortmid"{marking}, from=1-1, to=1-4]
        \arrow[equals, from=1-1, to=2-1]
        \arrow[equals, from=1-4, to=2-4]
        \arrow[""{name=1, anchor=center, inner sep=0}, "{\prod_{\Delta,1}(\Delta,1)^*(\widehat n)}"'{inner sep=.8ex}, "\shortmid"{marking}, from=2-1, to=2-4]
        \arrow["\leq"{description}, draw=none, from=0, to=1]
      \end{tikzcd} \qquad &(\text{Right Adjoint})\\
      &\leftrightarrow\quad 
      \begin{tikzcd}[ampersand replacement=\&]
        x \&\&\& x \\
        x \&\&\& x
        \arrow[""{name=0, anchor=center, inner sep=0}, "p"{inner sep=.8ex}, "\shortmid"{marking}, from=1-1, to=1-4]
        \arrow[equals, from=1-1, to=2-1]
        \arrow[equals, from=1-4, to=2-4]
        \arrow[""{name=1, anchor=center, inner sep=0}, "{\overline{\prod_{\Delta,1}(\Delta,1)^*(\widehat n)}}"'{inner sep=.8ex}, "\shortmid"{marking}, from=2-1, to=2-4]
        \arrow["\leq"{description}, draw=none, from=0, to=1]
      \end{tikzcd}\qquad &(\text{Compactness})
    \end{align*}
  which proves that $-\wedge\id$ has a right adjoint, so that $\id$ is 
  exponentiable as in \cref{def:exponentiable-and-cartesianclosed}, as claimed.
\end{proof}

Some notation for the main result needs to be established. Use `$\id\supset n$' 
as notation for each right adjoint as in 
\cref{lemma:identity-proarrow-exponentiable}. Thus, we have the adjunction 
statement $p\wedge\id\leq n$ iff $p\leq \id\supset n$. Likewise, for 
$m\colon x\proto y$ and its tabulator $Tm$, we write $m^*(n)$ for the proarrow 
domain of the restriction cell 
  \begin{equation*}
    \begin{tikzcd}
      Tm & Tm \\
      x & y
      \arrow[""{name=0, anchor=center, inner sep=0}, "{m^*(n)}"{inner sep=.8ex}, "\shortmid"{marking}, from=1-1, to=1-2]
      \arrow["d"', from=1-1, to=2-1]
      \arrow["c", from=1-2, to=2-2]
      \arrow[""{name=1, anchor=center, inner sep=0}, "n"'{inner sep=.8ex}, "\shortmid"{marking}, from=2-1, to=2-2]
      \arrow["{\mathrm{res}}"{description}, draw=none, from=0, to=1]
    \end{tikzcd}
  \end{equation*}
Likewise $\prod_m$ and $\Sigma_m$ denote the right- and left-adjoints to 
substitution along the domain and codomain morphisms coming with $Tm$. As in 
\cref{def:tabulator} write $t_m$ for the span formed by the legs of the 
tabulator of $m$. 

\begin{thm} \label{theo:locally-cartesian-closed}
  If $\dbl D$ is a $\prod$-double olog with strong tabulators, then each hom 
  category $\dbl D(x,y)$ is cartesian closed and the formulas
    \begin{equation*}
      m\wedge n = \coprod_m(\id_{Tm}\wedge m^*(n))\qquad\qquad m\supset p = \prod_m(\id_{Tm}\supset m^*(p))
    \end{equation*}
  compute the product and exponential.
\end{thm}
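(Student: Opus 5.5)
The plan is to establish the product formula first and then obtain the exponential formula by chaining adjunctions. Here $\coprod_m$, $\prod_m$ and $m^*$ denote extension, dependent product and restriction along the span $\langle d,c\rangle\colon Tm\to x\times y$, so $m^*(n)\colon Tm\proto Tm$ is $d_!\odot n\odot c^*$ and $\coprod_m(q)=d^*\odot q\odot c_!$.

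\emph{Product formula.} I would apply Frobenius reciprocity (\cref{prop:Frobenius-reciprocity}) with $f=d$, $g=c$, proarrow $\id_{Tm}$ in the role of the proarrow being extended, and $n$ in the role of $p$. This gives
\begin{equation*}
  \coprod_{d,c}(\id_{Tm}\wedge d_!\, n\, c^*)\cong \bigl(\coprod_{d,c}\id_{Tm}\bigr)\wedge n .
\end{equation*}
Strongness of the tabulator (\cref{def:strong-tabulator}) says exactly that $\coprod_{d,c}\id_{Tm}=d^*\odot c_!\cong m$. Hence the right-hand side is $m\wedge n$, and the left-hand side is $\coprod_m(\id_{Tm}\wedge m^*(n))$. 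This step is routine once the Frobenius direction is matched to the notation.

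\emph{Exponential formula.} I would define $m\supset p:=\prod_m(\id_{Tm}\supset m^*(p))$. This uses the dependent product assumed in a $\prod$-double olog and the right adjoint $\id_{Tm}\supset-$ supplied by \cref{lemma:identity-proarrow-exponentiable} on the hom $\dbl D(Tm,Tm)$. For any $n\colon x\proto y$, local posetality reduces the argument to a chain of equivalences:
\begin{align*}
  n\wedge m\leq p &\iff \coprod_m(\id_{Tm}\wedge m^*(n))\leq p \\
  &\iff \id_{Tm}\wedge m^*(n)\leq m^*(p) \\
  &\iff m^*(n)\leq \id_{Tm}\supset m^*(p) \\
  &\iff n\leq \prod_m(\id_{Tm}\supset m^*(p)).
\end{align*}
These use, in order, the product formula together with commutativity of $\wedge$, then $\coprod_m\dashv m^*$, then \cref{lemma:identity-proarrow-exponentiable}, and finally $m^*\dashv\prod_m$. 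This shows $m\wedge-\dashv m\supset-$, so every proarrow is exponentiable. Since each hom already has finite products (local products and the top element $1_{x,y}$ from \cref{section:local-products-exponentiability}), each hom category is cartesian closed.

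\emph{Expected obstacle.} The delicate point is in the first step. I must check that the Frobenius comparison cell of \cref{construction:Frob-comparison-cell} is stated for exactly this configuration: extension along $(d,c)$ from $Tm$ to $(x,y)$, and restriction of $n$ back along the same pair. I must also check that the notational convention $f_!pg^*$ there agrees with the restriction $m^*(n)$ used here, i.e. that no stray reversal of companion and conjoint sides appears. If the sidedness does not match, I would instead appeal directly to the two modular laws of \cref{prop:modular-laws} with $f=d$ and $g=c$, reproducing the Frobenius computation verbatim. A second, minor point is that \cref{lemma:identity-proarrow-exponentiable} is applied at the object $Tm$. This is legitimate because the olog's tabulators are objects of the same double category, so $Tm$ lies within the lemma's scope.
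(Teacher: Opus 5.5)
Your proposal is correct and follows the paper's proof. You get the product formula from strongness of the tabulator ($\coprod_m \id_{Tm}\cong m$) together with Frobenius reciprocity along $(d,c)$. You get the exponential by composing $\coprod_m\dashv m^*$, the exponentiability of $\id_{Tm}$, and $m^*\dashv\prod_m$, which is exactly the chain the paper compresses into ``apply the identity-exponentiability lemma''. Your sidedness concern does not arise, since the paper's conventions ($f_!\odot p\odot g^*$ for restriction and $f^*\odot m\odot g_!$ for extension) match yours.
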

\begin{proof}
  The local product formula is immediate by strength and Frobenius:
    \begin{equation*}
      m\wedge n = (\coprod_m\id_{Tm})\wedge n\cong \coprod_m(\id_{Tm}\wedge m^*(n)).
    \end{equation*}
  Apply \cref{lemma:identity-proarrow-exponentiable} to the local product in the 
  coproduct on the right. This gives:
    \begin{equation*}
      m\wedge n \leq p \qquad \text{iff} \qquad n\leq \prod_m(\id_{Tm}\supset m^*(p))
    \end{equation*}
  as claimed.
\end{proof}

\begin{cor}
  Any $\prod$-double olog with strong tabulators is locally cartesian closed as 
  a fibration (cf. \cite[\S 1.8]{jacobs1999}).
\end{cor}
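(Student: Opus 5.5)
The corollary asks that the source–target fibration $\langle\src,\tgt\rangle\colon\dbl D_1\to\dbl D_0\times\dbl D_0$ be locally cartesian closed in the sense of \cite[\S 1.8]{jacobs1999}: every fiber is cartesian closed, and every reindexing functor $(f,g)^*$ preserves the cartesian closed structure (finite products and exponentials). The plan is to assemble this from results already proved rather than build anything new.

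\textbf{Step 1: fiberwise closure.} The fibers are exactly the hom categories $\dbl D(x,y)$. By \cref{theo:locally-cartesian-closed}, each of these is cartesian closed, with local products $m\wedge n$ and exponentials $m\supset p=\prod_m(\id_{Tm}\supset m^*(p))$.

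\textbf{Step 2: reindexing preserves products.} Restriction $(f,g)^*$ is computed as $f_!\odot -\odot g^*$. It has the left adjoint $\coprod_{f,g}$, so it preserves all limits that exist in the fibers, in particular the terminal $1_{x,y}$ and binary products $\wedge$. This can also be seen directly: the local terminal and local products are defined as restrictions along $!$ and along diagonals, and restrictions compose, so reindexing commutes with them.

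\textbf{Step 3: reindexing preserves exponentials.} This is the main point. The canonical comparison $(f,g)^*(m\supset p)\leq (f,g)^*m\supset (f,g)^*p$ always exists: it is transposed from
\[
(f,g)^*(m\supset p)\wedge (f,g)^*m\cong (f,g)^*\bigl((m\supset p)\wedge m\bigr)\leq (f,g)^*p,
\]
using Step 2 and the evaluation map. For the reverse inequality, take an arbitrary $q\leq (f,g)^*m\supset(f,g)^*p$. Then $q\wedge (f,g)^*m\leq (f,g)^*p$, and hence
\[
\coprod_{f,g}\bigl(q\wedge (f,g)^*m\bigr)\leq p .
\]
By Frobenius reciprocity (\cref{prop:Frobenius-reciprocity}) the left-hand side is $\coprod_{f,g}q\wedge m$, so $\coprod_{f,g}q\leq m\supset p$. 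Transposing back along $\coprod_{f,g}\dashv(f,g)^*$ gives $q\leq (f,g)^*(m\supset p)$. Taking $q=(f,g)^*m\supset(f,g)^*p$ yields the reverse inequality. Since the fibers are posets, this gives an isomorphism, and no coherence checks are needed.

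The only delicate point is Step 3, namely making sure Frobenius is applied with the variances matching \cref{construction:Frob-comparison-cell}, i.e.\ in the form $\coprod_{f,g}(q\wedge f_!pg^*)\cong(\coprod_{f,g}q)\wedge p$. Note that the dependent products of the $\prod$-double olog are not needed for Step 3 itself; they enter only through the existence of the exponentials in Step 1. Beck–Chevalley for $\prod$ is part of the definition of a $\prod$-double olog if one also wants to record that fibered products are stable under reindexing.
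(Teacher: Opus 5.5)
Your proof is correct. It has the same two-step shape as the paper's proof: fiberwise closure comes from \cref{theo:locally-cartesian-closed}, and then the closed structure is shown to be stable under reindexing. You justify the second step differently, though. The paper disposes of it in one line, saying reindexing preserves the closed structure `by Beck--Chevalley'. You instead run the formal transposition argument through $\coprod_{f,g}\dashv(f,g)^*$ and Frobenius reciprocity (\cref{prop:Frobenius-reciprocity}), which is essentially the converse of the standard fact that fibred exponents force Frobenius. Your route makes the second step independent of the explicit formula $\prod_m(\id_{Tm}\supset m^*(p))$, of tabulators, and of dependent products. In fact your chain of equivalences shows directly that $(f,g)^*(m\supset p)$ is an exponential of $(f,g)^*p$ by $(f,g)^*m$ in any `double category of relations'. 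So the $\prod$-structure and strong tabulators are needed only to produce the exponentials in the first place. The paper's appeal to Beck--Chevalley is shorter but leaves the mechanism implicit, and on its own it does not seem to suffice. Pushing $(f,g)^*$ through $\prod_m$ by Beck--Chevalley, along the pullback of $(d,c)$ and $(f,g)$, only reduces the question to whether reindexing along the induced projections commutes with $\id_{Tm}\supset-$. That is again an instance of preservation of implication, so an argument of your Frobenius type is needed at that point anyway. Your proposal therefore spells out the step the paper's proof compresses.
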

\begin{proof}
  Any `double category of relations' satisfies Beck-Chevalley and Frobenius 
  reciprocity. The previous shows each fiber is cartesian closed. Reindexing 
  preserves the closed structure by Beck-Chevalley.
\end{proof}

\begin{rem}
  The formulas in \cref{theo:locally-cartesian-closed} closely resemble and in 
  fact are inspired by, or even based upon, those appearing in 
  \cite[Proposition 10.5.4]{jacobs1999}. However, the formulas in the reference 
  appear in the entirely different context of \emph{closed comprehension 
  categories} and provide fiberwise cartesian closed structure as a result of 
  assumptions placed on that structure. Now, an original version of 
  \cref{theo:locally-cartesian-closed} mimicked this development by asking for a 
  highly structured comprehension scheme \cite[\S 8]{lambert2022} via tabulators 
  and valued in spans as in \cite{niefield2012}. The argument of 
  \cite[Proposition 10.5.4]{jacobs1999} runs almost exactly: local products are 
  almost seen to be obtained as a successive application of left adjoints. There 
  is one issue, however, namely, that without in some way inserting a diagonal, 
  the sequence of adjunction bijections does not end up producing the local 
  product in the receiving structure of the comprehension scheme. In the 
  reference this is the crucial step that shows that the left adjoint formula 
  for the local product works. Due to the \emph{split contexts} of proarrows in 
  double categories, this necessary move fails here. What was recognized is that 
  the needed effect can be achieved by intersecting with an identity predicate, 
  hence the local product with identity proarrows considered in 
  \cref{lemma:identity-proarrow-exponentiable}. Now, to make this work, however, 
  the identity proarrows must actually behave like identity predicates. The 
  former technical key for this is the assumption of \emph{unit-purity}. 
  However, this has been made redundant as it is implied by discreteness in 
  `double categories of relations'. Thus, altogether, the result of 
  \cref{theo:locally-cartesian-closed} is that the \emph{native structure} of a 
  $\prod$-double olog with strong tabulators is \emph{always homwise cartesian 
  closed} without the additional immense structuring assumptions on a 
  comprehension scheme. Note also that the formulas in 
  \cref{theo:locally-cartesian-closed} are almost exactly those in 
  \cite[Proposition 10.5.4]{jacobs1999} except on the product side adjusted by 
  interesection with an identity and on the implication side adjusted by the 
  identity antecedent within the dependent product. Notice also that we have not 
  crossed notation. Unit-purity implies that the general implication formula 
  applied to the former case recovers the identity exponential.
\end{rem}

\section{Cocartesianness \& Negation}
\label{section:cocartesian-negation}

The definition of a \emph{cocartesian double category} is an evident one on the 
pattern established by instantiating the abstract 2-categorical definition of a 
(co)cartesian object in the 2-category of double categories, pseudo double 
functors and tight transformations \cite{aleiferi2018}. It has been stated in an 
equivalent form and studied extensively from the standpoint of hypergraph 
categories and decorated cospans in \cite{patterson2023}. For our purposes, just 
as cartesianness leads to local products/conjunction $\wedge$ and local 
terminals/truth $\top$, cocartesianness leads to local disjunction $\vee$ and 
logical falsity/absurdity $\bot$.

\begin{defi}\label{def:cocartesian}
  A double category $\dbl D$ is \textbf{cocartesian} if the double functors 
    \begin{equation*}
      \Delta\colon\dbl D\to\dbl D\times\dbl D \qquad\text{and}\qquad !\colon \dbl D\to 1 
    \end{equation*}
  have left adjoints in the 2-category of double categories, pseudo double 
  functors and tight transformations denoted by $+$ and $\emptyset$, 
  respectively.
\end{defi}

\begin{rem}
  As observed in \cite{lambert2024a} for cartesian double categories, by duality 
  a cocartesian double category will have finite coproducts in each of the two 
  categories $\dbl D_0$ and $\dbl D_1$. These should be preserved appropriately 
  by the external structure functors of the double category. The tight 
  codiagonals and initials will be denoted 
    \begin{equation*}
      \nabla_x\colon x+x\to x\qquad\text{and}\qquad \init\colon\emptyset\to x
    \end{equation*}
  A cocartesian equipment will have local coproducts and initials in the 
  following way.
\end{rem}

\begin{lem} \label{lemma:local-coproducts}
  A cocartesian equipment has finite coproducts locally.
\end{lem}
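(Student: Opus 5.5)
We dualize the construction of local products recalled in \cref{section:local-products-exponentiability}: there $m\wedge n$ was the restriction of $m\times n$ along the diagonals, and $\top$ was the restriction of the proarrow $1\proto 1$ along $!$. Here we take extensions along the codiagonals instead. For $m,n\colon x\proto y$, form the external coproduct $m+n\colon x+x\proto y+y$ in $\dbl D_1$. By the remark after \cref{def:cocartesian}, its source and target are $x+x$ and $y+y$. Then set
\[
m\vee n := \coprod_{\nabla_x,\nabla_y}(m+n)\cong (\nabla_x)^*\odot(m+n)\odot(\nabla_y)_!.
\]
For the local initial, take $\bot_{x,y}:=\coprod_{\init,\init}(\emptyset)$, where $\emptyset\colon\emptyset\proto\emptyset$ is the initial object of $\dbl D_1$. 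This is the image of the initial object of $1$ under the left adjoint $\emptyset$.

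To verify the universal property of $m\vee n$, we string together natural bijections for any $k\colon x\proto y$.
\begin{enumerate}
\item Globular cells $m\vee n\Rightarrow k$ correspond to cells $m+n\Rightarrow k$ with tight boundaries $\nabla_x,\nabla_y$. This is the universal property of the extension cell, i.e.\ the adjunction $\coprod_{\nabla_x,\nabla_y}\dashv(\nabla_x,\nabla_y)^*$ of \cref{eqn:left-adjoint-to-restriction}.
\item Such cells correspond to cells $m+n\Rightarrow \Delta(k)$ in the double category, where $\Delta(k)=(k,k)$. By the adjunction $+\dashv\Delta$ of \cref{def:cocartesian}, these correspond to pairs of cells $m\Rightarrow k$ and $n\Rightarrow k$ in $\dbl D\times\dbl D$, with tight boundaries the unit components.
\item The tight components of the counit $+\Delta\Rightarrow 1$ are $\nabla_x$ and $\nabla_y$, so the cells in step~2 have the right boundaries. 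Their transposes are globular because the tight parts of the counit/unit at the identity-boundary level are identities.
\end{enumerate}

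The delicate point is the first half of step~2. We must check that a cell $m+n\Rightarrow k$ framed by $\nabla_x,\nabla_y$ is exactly the same datum as a morphism $m+n\to\Delta k$ in the adjunction, i.e.\ that $+$ on $\dbl D_1$ is genuinely a coproduct in $\dbl D_1$ whose tight frames are the coproducts in $\dbl D_0$. We would obtain this from the fact that adjunctions in the 2-category of double categories and tight transformations restrict to adjunctions $+_1\dashv\Delta_1$ and $+_0\dashv\Delta_0$ compatible with $\src,\tgt$, exactly as in \cite{lambert2024a} for the cartesian case.

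The initial case runs the same way. A globular cell $\bot_{x,y}\Rightarrow k$ corresponds to a cell from $\emptyset$ to $k$ framed by $\init$. There is exactly one such cell, since $\emptyset$ is initial in $\dbl D_1$ with source and target $\emptyset$ in $\dbl D_0$. Hence $\bot_{x,y}$ is initial in $\dbl D(x,y)$.

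Naturality of all these bijections in $k$ is routine, so each hom category has binary coproducts and an initial object, i.e.\ finite coproducts. The main obstacle is the bookkeeping of the frames in step~2 for the binary coproduct, which is the nontrivial use of the cocartesian structure.
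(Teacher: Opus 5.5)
Your proposal is correct and follows the paper's proof. It uses the same extensions: $m+n$ along $(\nabla_x,\nabla_y)$, and the initial proarrow along the initial maps. Your initial object of $\dbl D_1$ is isomorphic to the paper's $\id_\emptyset$ by pseudofunctoriality. You also spell out the universal-property check that the paper compresses into ``coproduct injections are induced from those into $m+n$.''

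The one imprecision is in steps 2--3. The cells $m\Rightarrow k$ and $n\Rightarrow k$ are globular not because unit components are identities. They are globular because of the triangle identity $\nabla_x\circ\iota_i=1_x$, or equivalently because $\src$ and $\tgt$ carry copairings in $\dbl D_1$ to copairings in $\dbl D_0$.
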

\begin{proof}
  The construction is dual to that of local products. These are given by the 
  following extensions:
    \begin{equation*}
      \begin{tikzcd}
        {x+x} & {y+y} \\
        x & y
        \arrow[""{name=0, anchor=center, inner sep=0}, "{m+n}"{inner sep=.8ex}, "\shortmid"{marking}, from=1-1, to=1-2]
        \arrow["{\nabla_x}"', from=1-1, to=2-1]
        \arrow["{\nabla_y}", from=1-2, to=2-2]
        \arrow[""{name=1, anchor=center, inner sep=0}, "{m\vee n}"'{inner sep=.8ex}, "\shortmid"{marking}, from=2-1, to=2-2]
        \arrow["{\mathrm{ext}}"{description}, draw=none, from=0, to=1]
      \end{tikzcd}\qquad\qquad 
      \begin{tikzcd}
        \emptyset & \emptyset \\
        x & y
        \arrow[""{name=0, anchor=center, inner sep=0}, "{\id_\emptyset}"{inner sep=.8ex}, "\shortmid"{marking}, from=1-1, to=1-2]
        \arrow["{\init}"', from=1-1, to=2-1]
        \arrow["{\init}", from=1-2, to=2-2]
        \arrow[""{name=1, anchor=center, inner sep=0}, "\bot"'{inner sep=.8ex}, "\shortmid"{marking}, from=2-1, to=2-2]
        \arrow["{\mathrm{ext}}"{description}, draw=none, from=0, to=1]
      \end{tikzcd}
    \end{equation*}
  Coproduct injections to $m\vee n$ are induced from those into the coproduct 
  $m+n$. 
\end{proof}

\begin{defi} \label{def:negation-operator}
  For objects $x$ and $y$ in a locally cartesian closed equipment with initial 
  objects locally, the corresponding \textbf{negation operator} is the 
  implication operator 
    \begin{equation*}
      (-)\Rightarrow\bot\colon \dbl D(x,y) \to \dbl D(x,y) \qquad\qquad p\;\;\mapsto\;\; \neg p := p\Rightarrow \bot.
    \end{equation*}
  into the local initial $\bot$.
\end{defi}

\begin{exa} \label{example:negation}
  We end the section with some examples. In $\Rel$, negation is the usual 
  \emph{relational complement}. That is for $R\colon X\proto Y$, negation 
  $R\Rightarrow\bot$ is calculated as 
    \begin{equation*}
      R\Rightarrow\bot = \lbrace (x,y)\mid \neg (xRy)\rbrace
    \end{equation*}
  that is, the set of pairs \emph{not} related under $R$. As an example type of 
  query, consider the table 
    \begin{equation*}
      \begin{tabular}{| l | l | }
          \hline\multicolumn{2}{| c |}{\bf Inventory}\\
          \hline {\bf Apothecary }&{\bf Ingredient}\\
          \hline Arcadia's Cauldron & giant's toe \\
          \hline Arcadia's Cauldron & wheat \\
          \hline Arcadia's Cauldron & void salts \\
          \hline Elgrim's Elixers & giant's toe \\
          \hline Elgrim's Elixers & swamp fungal pod \\
          \hline The White Phial & Daedra heart \\
          \hline
      \end{tabular} 
    \end{equation*}
  instancing again the Inventory olog. Suppose that we are willing to work for 
  our wheat but we still need the giant's toe. Suppose also that we are deathly 
  allergic to swamp fungal pod. We will not even go in any shops carrying it. If 
  we just want to know which shops do not carry swamp fungal pod, we restrict 
  along the appropriate global element:
    \begin{equation*}
      \begin{tikzcd}
        {\fbox{Apothecary}} &&&& 1 \\
        {\fbox{Apothecary}} &&&& {\fbox{Ingredient}}
        \arrow[""{name=0, anchor=center, inner sep=0}, "{\textrm{Shops with swamp fungal pod}}"{inner sep=.8ex}, "\shortmid"{marking}, from=1-1, to=1-5]
        \arrow[equals, from=1-1, to=2-1]
        \arrow["{\text{swamp fungal pod}}", from=1-5, to=2-5]
        \arrow[""{name=1, anchor=center, inner sep=0}, "{\mathrm{Inventory}}"'{inner sep=.8ex}, "\shortmid"{marking}, from=2-1, to=2-5]
        \arrow["{\mathrm{res}}"{description}, draw=none, from=0, to=1]
      \end{tikzcd}
    \end{equation*}
  then negate and restrict:
    \begin{equation*}
      \begin{tikzcd}
        {\fbox{Apothecary}} &&&&& 1 \\
        {\fbox{Apothecary}} &&&&& {\fbox{Ingredient}}
        \arrow[""{name=0, anchor=center, inner sep=0}, "{\textrm{Shops without swamp fungal pod}}"{inner sep=.8ex}, "\shortmid"{marking}, from=1-1, to=1-6]
        \arrow[equals, from=1-1, to=2-1]
        \arrow["{\text{Swamp fungal pod}}", from=1-6, to=2-6]
        \arrow[""{name=1, anchor=center, inner sep=0}, "{\neg(\textrm{Shops with swamp fungal pod})}"'{inner sep=.8ex}, "\shortmid"{marking}, from=2-1, to=2-6]
        \arrow["{\mathrm{res}}"{description}, draw=none, from=0, to=1]
      \end{tikzcd}
    \end{equation*}
  returning the table 
    \begin{equation*}
      \begin{tabular}{| l |}
          \hline\multicolumn{1}{| c |}{\bf Shops without Swamp Fungal Pod}\\
          \hline Arcadia's Cauldron \\
          \hline The White Phial \\
          \hline
      \end{tabular}. 
    \end{equation*}
  Note that the negation proarrow as the codomain of the second restriction cell 
  above returns a collection of pairs, namely, those shop-ingredient pairs not 
  in the original relation. So, the second filter is actually for those shops 
  \emph{not related} to swamp fungal pod. In other words, we form the relation 
  returning those shops \emph{with} the ingredient, form the relational 
  complement of that, and then filter for those shops not carrying the 
  ingredient we want to avoid.

  Now, this is indeed a little elaborate. Alternatively, we can form a conjoint 
  and then negate and compose: 
    \begin{equation*}
      \begin{tikzcd}
        {\fbox{Apothecary}} && {\fbox{Ingredient}} &&&& 1
        \arrow["{\mathrm{Inventory}}"{inner sep=.8ex}, "\shortmid"{marking}, from=1-1, to=1-3]
        \arrow["{\neg(\text{Swamp fungal pod})^*}"{inner sep=.8ex}, "\shortmid"{marking}, from=1-3, to=1-7]
      \end{tikzcd}
    \end{equation*}
  which returns the same result in a simpler fashion but does not utilize in the 
  same way the native functional aspects of the olog. Note that the effect all 
  the way on the right is instanced by exactly all the ingredients other than 
  swamp fungal pod. The composite with its implied existential quantification 
  returns those shops having an ingredient on the list of everything but swamp 
  fungal pod. Now, we can combine this easily with a local conjunction to 
  execute the compound query where we look also for giant's toe:
    \begin{equation*}
      \resizebox{\textwidth}{!}{%
        \begin{tikzcd}[ampersand replacement=\&]
        {\fbox{Apothecary}} \&\&\&\&\&\&\&\& 1 \\
        {\fbox{Apothecary}\times \fbox{Apothecary}} \&\&\& {\fbox{Ingredient}\times \fbox{Ingredient}} \&\&\&\&\& 1
        \arrow[""{name=0, anchor=center, inner sep=0}, "{\text{Shops with Giant's toe but without Swamp fungal pod}}"{inner sep=.8ex}, "\shortmid"{marking}, from=1-1, to=1-9]
        \arrow["\Delta"', from=1-1, to=2-1]
        \arrow[equals, from=1-9, to=2-9]
        \arrow["{\mathrm{Inventory}\times\mathrm{Inventory}}"'{inner sep=.8ex}, "\shortmid"{marking}, from=2-1, to=2-4]
        \arrow["{\neg(\text{Swamp fungal pod})^*\times\text{Giant's toe}}"'{inner sep=.8ex}, "\shortmid"{marking}, from=2-4, to=2-9]
        \arrow["{\mathrm{res}}"{description}, draw=none, from=0, to=2-4]
        \end{tikzcd}
        }
    \end{equation*}
  returning exactly the table
    \begin{equation*}
      \begin{tabular}{| l |}
          \hline\multicolumn{1}{| c |}{\bf Shops with Giant's Toe but without Swamp Fungal Pod}\\
          \hline Arcadia's Cauldron \\
          \hline
      \end{tabular}. 
    \end{equation*}
  of the single shop that has giant's toe and does not have swamp fungal pod. 
  Note that, as a result of the construction of local products, this process is 
  much like a promonoidal \emph{Day convolution} of the two effects. 
\end{exa}
\begin{exa} 
  This is also evident in a disjunction query, where for example we are happy to 
  patronize any shop having either of two desired ingredients:
    \begin{equation*}
      \resizebox{\textwidth}{!}{%
        \begin{tikzcd}[ampersand replacement=\&]
        {\fbox{Apothecary}+ \fbox{Apothecary}} \&\&\& {\fbox{Ingredient}\times \fbox{Ingredient}} \&\&\& 1 \\
        {\fbox{Apothecary}} \&\&\&\&\&\& 1
        \arrow["{\mathrm{Inventory}+\mathrm{Inventory}}"{inner sep=.8ex}, "\shortmid"{marking}, from=1-1, to=1-4]
        \arrow["\nabla"', from=1-1, to=2-1]
        \arrow["{\text{Wheat}+\text{Giant's toe}}"{inner sep=.8ex}, "\shortmid"{marking}, from=1-4, to=1-7]
        \arrow[""{name=0, anchor=center, inner sep=0}, "{\text{Shops with Wheat or Giant's toe}}"'{inner sep=.8ex}, "\shortmid"{marking}, from=2-1, to=2-7]
        \arrow[equals, from=2-7, to=1-7]
        \arrow["{\mathrm{ext}}"{description}, draw=none, from=1-4, to=0]
      \end{tikzcd}}
    \end{equation*}
  This is a local disjunction query, returning the table
    \begin{equation*}
      \begin{tabular}{| l |}
          \hline\multicolumn{1}{| c |}{\bf Shops with Wheat or Giant's Toe}\\
          \hline Arcadia's Cauldron \\
          \hline Elgrim's Elixers \\
          \hline
      \end{tabular}
    \end{equation*}
  of exactly those shops having either one but not necessarily both. We now 
  leave it to the reader's ingenuity to construct other combinations and 
  examples of interest.
\end{exa}

\section{Distributivity}
\label{section:distributivity}

Now that we have cocartesian structure, it is worth asking about its interaction 
with the cartesian structure. In particular, it is natural to ask for a 
\emph{distributive law}. Phrased for the two operations in the ambient 
2-category, it is a \emph{global} distributive law describing the interaction 
between the two operations in total. A main result of the present work is that 
global distributivity is stable under localization. That is, suppose we have an 
equipment that is both cartesian and cocartesian with Frobenius reciprocity for 
local products. If distributivity holds for this equipment as a cartesian and 
cocartesian object in the 2-category of double categories, then the induced 
local products and coproducts are distributive too. In this sense global 
distributivity localizes. The result requires some set up and notation.

\begin{con}[Global Distributor] \label{construction:global-distributor}
   Let $\dbl D$ denote a fixed equipment that is both cartesian and cocartesian. 
   Suppose that local products satisfy Frobenius. Now, induced via universal 
   properties are canonical comparison morphisms for both distributivity and 
   interchange: 
    \begin{equation*}
      \gamma \colon (x\times y)+(x\times z) \to x\times (y+z)\qquad\qquad
      \delta\colon (x\times y)+(x\times z) \to (x+x) \times (y+z)
    \end{equation*}
  Subscript indexing will be added as needed. These morphisms are of course 
  related at least by codiagonals:
    \begin{equation*}
      \delta(\nabla\times 1) = \gamma
    \end{equation*}
  as can be checked using the universal properties of the involved arrows. Now 
  additionally, $\delta$ commutes with diagonal and codiagonals in the sense 
  that 
    \begin{equation*}
      \delta(\Delta_x+\Delta_x) = \Delta_{x+x}\qquad\qquad (\nabla_x\times\nabla_x)\delta = \nabla_{x\times x}
    \end{equation*}
  both hold. The interchanger $\delta$ fits into the following commutative 
  diagram:
    \begin{equation*}
      \begin{tikzcd}
        {x+x} & x & {x\times x} \\
        {(x\times x)+(x\times x)} && {(x+x)\times (x+x)}
        \arrow["\nabla", from=1-1, to=1-2]
        \arrow["{\Delta+\Delta}"', from=1-1, to=2-1]
        \arrow["\Delta", from=1-2, to=1-3]
        \arrow["\delta"', from=2-1, to=2-3]
        \arrow["{\nabla\times\nabla}"', from=2-3, to=1-3]
      \end{tikzcd}
    \end{equation*}
  and in this sense each object $x$ is much like a bialgebra for the global rig 
  structure of $\dbl D$. Now, there is a canonical comparison cell of the form 
    \begin{equation*}
      \begin{tikzcd}
        {(x\times y)+(x\times z)} && {(x'\times y')+(x'\times z')} \\
        {x\times (y+z)} && {x'\times (y'+z')}
        \arrow[""{name=0, anchor=center, inner sep=0}, "{(m\times p)+(m\times q) }"{inner sep=.8ex}, "\shortmid"{marking}, from=1-1, to=1-3]
        \arrow["\gamma"', from=1-1, to=2-1]
        \arrow["\gamma", from=1-3, to=2-3]
        \arrow[""{name=1, anchor=center, inner sep=0}, "{m\times (p+q)}"'{inner sep=.8ex}, "\shortmid"{marking}, from=2-1, to=2-3]
        \arrow["\Gamma"{description}, draw=none, from=0, to=1]
      \end{tikzcd}
    \end{equation*}
  for any such triple of objects of $\dbl D$. This has associated to it globular 
  cells by \emph{sliding}, namely, 
    \begin{equation*}
      \begin{tikzcd}
        {(x\times y)+(x\times z)} && {(x'\times y')+(x'\times z')} & {x'\times (y'+z')} \\
        {(x\times y)+(x\times z)} & {x\times (y+z)} && {x'\times (y'+z')}
        \arrow[""{name=0, anchor=center, inner sep=0}, "{(m\times p)+(m\times q) }"{inner sep=.8ex}, "\shortmid"{marking}, from=1-1, to=1-3]
        \arrow[equals, from=1-1, to=2-1]
        \arrow["{\gamma_!}"{inner sep=.8ex}, "\shortmid"{marking}, from=1-3, to=1-4]
        \arrow[equals, from=1-4, to=2-4]
        \arrow["{\gamma_!}"'{inner sep=.8ex}, "\shortmid"{marking}, from=2-1, to=2-2]
        \arrow[""{name=1, anchor=center, inner sep=0}, "{m\times (p+q)}"'{inner sep=.8ex}, "\shortmid"{marking}, from=2-2, to=2-4]
        \arrow["{\Gamma_!}"{description}, draw=none, from=0, to=1]
      \end{tikzcd}
    \end{equation*}
  and 
    \begin{equation*}
      \begin{tikzcd}
        {x\times (y+z)} & {(x\times y)+(x\times z)} && {(x'\times y')+(x'\times z')} \\
        {x\times (y+z)} && {x'\times (y'+z')} & {(x'\times y')+(x'\times z')}
        \arrow["{\gamma^*}"{inner sep=.8ex}, "\shortmid"{marking}, from=1-1, to=1-2]
        \arrow[equals, from=1-1, to=2-1]
        \arrow[""{name=0, anchor=center, inner sep=0}, "{(m\times p)+(m\times q) }"{inner sep=.8ex}, "\shortmid"{marking}, from=1-2, to=1-4]
        \arrow[equals, from=1-4, to=2-4]
        \arrow[""{name=1, anchor=center, inner sep=0}, "{m\times (p+q)}"'{inner sep=.8ex}, "\shortmid"{marking}, from=2-1, to=2-3]
        \arrow["{\gamma^*}"'{inner sep=.8ex}, "\shortmid"{marking}, from=2-3, to=2-4]
        \arrow["{\Gamma^*}"{description}, draw=none, from=0, to=1]
      \end{tikzcd}
    \end{equation*}
  each of which is invertible if the original $\Gamma$ is invertible 
  \cite[Lemma A.7]{patterson2026}. These are called the \emph{commutor} and 
  \emph{cocommutor} cells associated to $\Gamma$. The data of the arrows 
  $\gamma$ and cells $\Gamma$ organize into a tight transformation of double 
  categories 
    \begin{equation*}
      \begin{tikzcd}
        {\dbl D^{\times3}} &&& {\dbl D^{\times2}} \\
        {\dbl D^{\times4}} \\
        {\dbl D^{\times4}} && {\dbl D^{\times2}} & {\dbl D}
        \arrow["{1\times (-+=)}", from=1-1, to=1-4]
        \arrow["{\Delta\times 1\times 1}"', from=1-1, to=2-1]
        \arrow["{\times }", from=1-4, to=3-4]
        \arrow["{1\times \tau\times 1}"', from=2-1, to=3-1]
        \arrow["\Gamma"{description}, between={0.2}{0.8}, Rightarrow, from=3-1, to=1-4]
        \arrow["{(-+=)\times (?+??)}"', from=3-1, to=3-3]
        \arrow["\times"', from=3-3, to=3-4]
      \end{tikzcd}
    \end{equation*}
  where $\tau$ is the canonical morphism interchanging the factors of the 
  cartesian product. The (co)cartesian structure is said to be \textbf{globally 
  distributive} if the arrow and cell components of $\Gamma$ viewed as a 
  transformation are all invertible. Under this assumption of invertibility note 
  that we thus have an invertible cell 
    \begin{equation*} 
      \resizebox{\textwidth}{!}{%
        \begin{tikzcd}[ampersand replacement=\&]
        {x\times (y+z)} \& {(x\times y)+(x\times z)} \&\& {(x'\times y')+(x'\times z')} \& {x'\times (y'+z')} \\
        {x\times (y+z)} \&\& {x'\times (y'+z')} \& {(x'\times y')+(x'\times z')} \& {x'\times (y'+z')} \\
        {x\times (y+z)} \&\& {x'\times (y'+z')} \&\& {x'\times (y'+z')}
        \arrow["{\gamma^*}"{inner sep=.8ex}, "\shortmid"{marking}, from=1-1, to=1-2]
        \arrow[equals, from=1-1, to=2-1]
        \arrow[""{name=0, anchor=center, inner sep=0}, "{(m\times p)+(m\times q) }"{inner sep=.8ex}, "\shortmid"{marking}, from=1-2, to=1-4]
        \arrow[""{name=1, anchor=center, inner sep=0}, "{\gamma_!}"{inner sep=.8ex}, "\shortmid"{marking}, from=1-4, to=1-5]
        \arrow[equals, from=1-4, to=2-4]
        \arrow[equals, from=1-5, to=2-5]
        \arrow[""{name=2, anchor=center, inner sep=0}, "{m\times (p+q)}"'{inner sep=.8ex}, "\shortmid"{marking}, from=2-1, to=2-3]
        \arrow[from=2-1, to=3-1]
        \arrow["{\gamma^*}"'{inner sep=.8ex}, "\shortmid"{marking}, from=2-3, to=2-4]
        \arrow[equals, from=2-3, to=3-3]
        \arrow[""{name=3, anchor=center, inner sep=0}, "{\gamma_!}"'{inner sep=.8ex}, "\shortmid"{marking}, from=2-4, to=2-5]
        \arrow[equals, from=2-5, to=3-5]
        \arrow[""{name=4, anchor=center, inner sep=0}, "{m\times (p+q)}"', from=3-1, to=3-3]
        \arrow[""{name=5, anchor=center, inner sep=0}, "\id"'{inner sep=.8ex}, "\shortmid"{marking}, from=3-3, to=3-5]
        \arrow["{\Gamma^*}"{description}, draw=none, from=0, to=2]
        \arrow["1"{description}, draw=none, from=1, to=3]
        \arrow["1"{description, pos=0.6}, draw=none, from=2, to=4]
        \arrow["\cong"{description, pos=0.4}, draw=none, from=2-4, to=5]
      \end{tikzcd}}
    \end{equation*}
  since $\gamma$, being invertible, induces a proarrow adjoint equivalence. This 
  cell of course collapses to an identity if $\dbl D$ is also locally posetal. 
  Thus, in the loose bicategory of $\dbl {D}$, proarrow distributivity takes the 
  form of what we think of as a \emph{conjugacy rule}. This will be used in the 
  calculation of the following proposition, the main result of the section.
\end{con}

\begin{thm} \label{theorem:local-distributivity}
  Let $\dbl D$ denote a cocartesian `double category of relations'. If $\dbl D$ 
  is globally distributive, then local products distribute over local coproducts 
  in the sense that
    \begin{equation*}
      (m\wedge p)\vee (m\wedge q) = m\wedge (p\vee q)
    \end{equation*}
  holds in $\dbl D(x,y)$. In this sense $\dbl D$ enjoys \textbf{local 
  distributivity}.
\end{thm}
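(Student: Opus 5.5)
Since a `double category of relations' is locally posetal, it suffices to prove the two inequalities. The easy one is $(m\wedge p)\vee(m\wedge q)\leq m\wedge(p\vee q)$. It follows from the universal properties alone: $m\wedge p\leq m$ and $m\wedge p\leq p\leq p\vee q$ give $m\wedge p\leq m\wedge(p\vee q)$, likewise for $q$, and then the local coproduct of \cref{lemma:local-coproducts} yields the inequality. The work is in the reverse inequality $m\wedge(p\vee q)\leq (m\wedge p)\vee(m\wedge q)$. For it I would write everything as loose composites of companions and conjoints and push the global distributor through, using the conjugacy rule of \cref{construction:global-distributor}.

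Concretely, I would expand the left side using the restriction formula for local products and the extension formula for local coproducts:
\begin{equation*}
  m\wedge(p\vee q) \cong \Delta_!\odot\bigl(m\times(\nabla^*\odot(p+q)\odot\nabla_!)\bigr)\odot\Delta^*
  \cong \Delta_!\odot(1\times\nabla)^*\odot\bigl(m\times(p+q)\bigr)\odot(1\times\nabla)_!\odot\Delta^*,
\end{equation*}
using functoriality of $\times$ on companions and conjoints.

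Next, invoke global distributivity in its conjugacy form: in the locally posetal setting, $m\times(p+q) = \gamma^*\odot\bigl((m\times p)+(m\times q)\bigr)\odot\gamma_!$ up to the invertibility of $\gamma$, where $\gamma^*$ and $\gamma_!$ are inverse to $\gamma_!$ and $\gamma^*$ respectively. Substituting this gives a composite
\begin{equation*}
  \Delta_!\odot(1\times\nabla)^*\odot\gamma^*\odot\bigl((m\times p)+(m\times q)\bigr)\odot\gamma_!\odot(1\times\nabla)_!\odot\Delta^*.
\end{equation*}
Using $\gamma=\delta(\nabla\times 1)$ and the bialgebra-type identity $(\nabla\times\nabla)\delta(\Delta+\Delta)=\Delta\nabla$ from \cref{construction:global-distributor}, I would rewrite the outer tight maps so that the composite becomes
\begin{equation*}
  \nabla^*\odot(\Delta+\Delta)_!\odot\bigl((m\times p)+(m\times q)\bigr)\odot(\Delta+\Delta)^*\odot\nabla_!,
\end{equation*}
since $+$ preserves companions and conjoints. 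This equals $\nabla^*\odot\bigl((m\wedge p)+(m\wedge q)\bigr)\odot\nabla_!=(m\wedge p)\vee(m\wedge q)$.

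The main obstacle is this rearrangement step. The identity relating $\Delta$, $\nabla$, $\gamma$ lives on tight arrows $x\to x\times(x+x)$ and its variants, but I need it at the level of companions and conjoints on both sides simultaneously. This requires commuting a conjoint of a diagonal past a companion of a codiagonal, that is, a Beck--Chevalley-type square, which is not automatic. I expect to only get an inequality directly, via the unit/counit cells $1\leq\Delta^*\Delta_!$ or $\nabla_!\nabla^*\leq 1$, plus discreteness ($\Delta^*\odot\Delta_!\cong\id$). If equality fails, I would settle for the needed direction $\leq$, which is all that is required. As a fallback, I would use Frobenius (\cref{prop:Frobenius-reciprocity}). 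Write $p\vee q=\coprod_{\nabla,\nabla}(p+q)$, so Frobenius gives $m\wedge(p\vee q)\cong\coprod_{\nabla,\nabla}\bigl((\nabla^*m\nabla_!)\wedge(p+q)\bigr)$. Then global distributivity should identify $(\nabla^*m\nabla_!)\wedge(p+q)$ on $x+x$ with $(m\wedge p)+(m\wedge q)$. That reduces the problem to a single conjugacy computation over $x+x$ rather than two interleaved ones.
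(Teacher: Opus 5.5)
\textbf{The primary route has a genuine gap at the rearrangement step you flag, and the remedy you propose gives the wrong inequality.}

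Carry the step out. Since $(1\times\nabla)\gamma=\nabla_{x\times x}$, the composite $(1\times\nabla)^*\odot\gamma^*$ is $\nabla_{x\times x}^*$. Writing $S=(m\times p)+(m\times q)$, your expression becomes
\[
(\Delta_x)_!\odot\nabla^*_{x\times x}\odot S\odot(\nabla_{y\times y})_!\odot\Delta_y^* ,
\]
while the target is $\nabla_x^*\odot(\Delta_x+\Delta_x)_!\odot S\odot(\Delta_y+\Delta_y)^*\odot(\nabla_y)_!$. To get the hard inequality you therefore need $\Delta_!\odot\nabla^*\leq\nabla^*\odot(\Delta+\Delta)_!$ for the commuting square $\nabla(\Delta+\Delta)=\Delta\nabla$, and the dual statement on the right.

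The only canonical comparison you can build from units and counits for that square is the mate $\nabla^*\odot(\Delta+\Delta)_!\leq\Delta_!\odot\nabla^*$. That just reproduces the easy direction you already have. The direction you need is the Beck--Chevalley condition for this pullback square. It is not among the hypotheses: the paper only obtains Beck--Chevalley under strong tabulators, \cref{lemma:beck-chevalley}. Discreteness, which only concerns diagonals, does not supply it either. So ``settling for the needed direction'' is not available on this route.

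\textbf{Your fallback is the right argument, and it is the paper's route.} The paper applies Frobenius first, then expands the local product on $x+x$, then conjugates by $\gamma$. However, the step you leave as ``should identify'' is the entire content of the proof, so it needs to be written out. Note also that the restriction of $m$ along $(\nabla,\nabla)$ is $\nabla_!\odot m\odot\nabla^*$, not $\nabla^*m\nabla_!$. With that correction,
\[
(\nabla_!\odot m\odot\nabla^*)\wedge(p+q)\cong(\Delta_{x+x})_!\odot(\nabla\times 1)_!\odot\bigl(m\times(p+q)\bigr)\odot(\nabla\times 1)^*\odot\Delta_{y+y}^* .
\]
Now insert $m\times(p+q)\cong\gamma^*\odot S\odot\gamma_!$ and use $\gamma^*\cong(\gamma^{-1})_!$.
\begin{itemize}
\item The left end becomes the companion of the single tight map $\gamma^{-1}(\nabla\times 1)\Delta_{x+x}$. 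This map equals $\Delta_x+\Delta_x$. To check, compose with the injections: $(\nabla\times1)\Delta_{x+x}\iota_i=(1\times\iota_i)\Delta_x=\gamma\,\iota_i'\,\Delta_x$, where $\iota_i'$ are the injections of $(x\times x)+(x\times x)$.
\item Dually, the right end is $(\Delta_y+\Delta_y)^*$.
\end{itemize}
Since $+$ preserves companions, conjoints and loose composites, the whole expression is $(m\wedge p)+(m\wedge q)$. Applying $\coprod_{\nabla,\nabla}$ then gives the equality outright, so your separate easy-direction argument becomes unnecessary.

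This also explains why this route succeeds where the first one stalls. After Frobenius, the codiagonal enters as a companion sitting next to $\Delta_!$. Only functoriality of companions and conjoints is then used, and no companion ever has to be moved past a conjoint.

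The paper phrases this tight-map step through the interchanger $\delta$. The identity $\gamma(\Delta+\Delta)=(\nabla\times1)\Delta_{x+x}$ does it directly and needs nothing about $\delta$, which is not invertible.
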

\begin{proof}
  The argument is inspired by the reverse direction of 
  \cite[Proposition 9.2.3]{jacobs1999} inasmuch as Frobenius reciprocity is used 
  to bring the companions and conjoints forming the conjunction as an extension 
  cell outside the local product to let the global distributivity conjugacy isos 
  do the rest of the work. Throughout we use the notation and terminology of 
  \cref{construction:global-distributor} above. In detail, first observe that by 
  Frobenius reciprocity and the construction of local products and coproducts by 
  restrictions and extensions:
    \begin{align*}
      m\wedge(p\vee q) &= (m\wedge\coprod_{\nabla,\nabla}(p+q))\\
      &=\coprod_{\nabla,\nabla}((\nabla,\nabla)^*(m)\wedge(p+q))\\
      &= \nabla^*\odot((\nabla_!\odot m \odot \nabla^*)\wedge (p+q)) \odot \nabla_!\\
      &= \nabla^*\odot(\Delta_!\odot ((\nabla_!\odot m \odot \nabla^*)\times (p+q))\odot\Delta^*) \odot \nabla_!.
    \end{align*}
  Note that we are suppressing the indexing subscripts. Now, working from the 
  last line, we can pull the most imbedded codiagonals outside the product, and 
  then use distributivity as proarrow conjugacy:
    \begin{align*}
      &\nabla^*\odot(\Delta_!\odot ((\nabla_!\odot m \odot \nabla^*)\times (p+q))=\Delta^*) \odot \nabla_! \\
      = \;&\nabla^*\odot\Delta_!\odot(\nabla_!\times 1)\odot (m\times (p+q))\odot (\nabla^*\times 1)\odot \Delta^*\odot\nabla_!\\
      =\;&\nabla^*\odot\Delta_!\odot(\nabla_!\times 1)\odot \gamma^*\odot ((m\times p)+ (m+q))\odot \gamma_!\odot (\nabla^*\times 1)\odot \Delta^*\odot\nabla_!.
    \end{align*}
  Now, from the last line, we use the relationship between $\gamma$ and the 
  interchanger $\delta$, to get 
    \begin{align*}
      &\nabla^*\odot\Delta_!\odot(\nabla_!\times 1)\odot \gamma^*\odot ((m\times p)+ (m+q))\odot \gamma_!\odot (\nabla^*\times 1)\odot \Delta^*\odot\nabla_!\\
      =\;&\nabla^*\odot\Delta_!\odot\delta^*\odot ((m\times p)+ (m+q))\odot \delta_!\odot \Delta^*\odot\nabla_!.
    \end{align*}
  But the interchanger commutes with the diagonals and thus commutes with their 
  companions and conjoints since, being invertible, it induces a proarrow 
  adjoint equivalence. So, the last line immediately above is calculated to give 
  the desired disjunction of conjunctions:
    \begin{align*}
      &\nabla^*\odot\Delta_!\odot\delta^*\odot ((m\times p)+ (m+q))\odot \delta_!\odot \Delta^*\odot\nabla_!\\
      =\;&\nabla^*\odot(\Delta_!+\Delta_!) \odot ((m\times p)+ (m+q))\odot (\Delta^*+\Delta^*)\odot\nabla_!\\
      =\;&\nabla^* \odot (\Delta_!\odot (m\times p)\odot\Delta^*+ \Delta_!\odot(m+q)\odot\Delta^*)\odot\nabla_!\\
      =\;&\nabla^* \odot ((m\wedge p) + (m\wedge q))\odot\nabla_!\\
      =\;&(m\wedge p) \vee (m\wedge q).
    \end{align*}
  Stringing all the calculations together shows that distributivity does hold in 
  $\dbl D(x,y)$, as required.
\end{proof}

\begin{rem}
  The argument above can actually be carried out in the case where $\dbl D$ is 
  merely a cartesian and cocartesian equipment satisfying Frobenius reciprocity 
  and global distributivity. The equalities above are all replaced by 
  isomorphisms and tracking the typing shows that they are all globular. 
  Nonetheless, we phrase the argument in terms of `double categories of 
  relations' since that is the presented forum for double ologs, and required is 
  a fuller account of the relationship between general loose compactness, 
  Frobenius reciprocity, and distributivity, which we leave to a future 
  investigation. Note that we have not assumed anything about the `double 
  category of relations' having any dependent products or being cartesian closed.
\end{rem}

\begin{cor}
  Any cocartesian `double category of relations' whose coproducts satisfy 
  Beck-Chevalley and with global distributivity in the above sense is a 
  \emph{coherent fibration} in the sense of \cite[Definition 4.2.1]{jacobs1999}.
\end{cor}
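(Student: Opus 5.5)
The plan is to check the clauses of Jacobs' Definition 4.2.1 for a coherent fibration one at a time. The fibration is the source-target projection $\langle\src,\tgt\rangle\colon\dbl D_1\to\dbl D_0\times\dbl D_0$, and the base $\dbl D_0\times\dbl D_0$ has finite products because $\dbl D$ is cartesian. Jacobs requires four things: a regular fibration (fibred finite products, coproducts along all maps satisfying Beck-Chevalley and Frobenius), fibred finite coproducts $\bot,\vee$ preserved by reindexing, and distributivity of $\wedge$ over $\vee$ in each fibre. The first is already the Corollary following \cref{prop:Frobenius-reciprocity}. It uses \cref{prop:Frobenius-reciprocity} together with the hypothesis that the dependent coproducts $\coprod_{f,g}$ satisfy Beck-Chevalley, the setting of \cref{cor:equality}. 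So the only work is with the cocartesian structure.

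Fibrewise finite coproducts are given by \cref{lemma:local-coproducts}. Distributivity in each fibre is exactly \cref{theorem:local-distributivity}, which needs only global distributivity. In the locally posetal setting, distributivity of $\wedge$ over the empty join, $m\wedge\bot=\bot$, comes out of the same argument. We run the proof of \cref{theorem:local-distributivity} with $\init\colon\emptyset\to x$ in place of $\nabla$. Frobenius gives $m\wedge\coprod_{\init,\init}\id_\emptyset\cong\coprod_{\init,\init}((\init,\init)^*m\wedge\id_\emptyset)$, and this is an extension of a proarrow $\emptyset\proto\emptyset$, hence $\bot$. The nullary case of global distributivity, $x\times\emptyset\cong\emptyset$, makes $\dbl D(\emptyset,\emptyset)$ trivial.

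The main obstacle is stability of $\vee$ and $\bot$ under reindexing, that is, $(f,g)^*(m\vee n)\cong(f,g)^*m\vee(f,g)^*n$ and $(f,g)^*\bot\cong\bot$. I would write $m\vee n=\coprod_{\nabla,\nabla}(m+n)$ and apply the Beck-Chevalley hypothesis to the square in $\dbl D_0\times\dbl D_0$ with vertices $(a+a,b+b)$, $(x+x,y+y)$, $(a,b)$, $(x,y)$, whose edges are $(f+f,g+g)$, $(\nabla,\nabla)$ and $(f,g)$. This gives
\[
(f,g)^*\coprod_{\nabla,\nabla}(m+n)\cong\coprod_{\nabla,\nabla}(f+f,g+g)^*(m+n).
\]
Two things then need checking. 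First, restriction commutes with $+$: $(f+f,g+g)^*(m+n)\cong f^*m+g^*n$ in the relevant sense, because $+$ is a pseudo double functor preserving companions and conjoints, and restrictions are composites with these. Second, the square must be a pullback. This is where global distributivity enters: the coproduct is disjoint and universal, since by distributivity $(x+x)\times_x a\cong a+a$. I would verify this by pulling back coproduct injections, using $x\times(y+z)\cong x\times y+x\times z$ to obtain extensivity-like stability in $\dbl D_0$.

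If the pullback property proves delicate, the fallback is a direct argument in the locally posetal bicategory of relations. Restriction is $f_!\odot-\odot g^*$, and loose composition with a fixed proarrow preserves local joins, because $\vee$ is computed as $\nabla^*\odot(-+-)\odot\nabla_!$ and composition is functorial in $+$. The analogous argument with $\init$ handles $\bot$. Together these give every clause of the definition.
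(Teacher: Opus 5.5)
Your proof is correct and uses the same decomposition as the paper: the regular part comes from the corollary after \cref{prop:Frobenius-reciprocity}, fibrewise $\vee,\bot$ from \cref{lemma:local-coproducts}, and distributivity from \cref{theorem:local-distributivity}. It is more complete, though. The paper's proof is a one-line citation and never checks that $\vee$ and $\bot$ are \emph{fibred}, i.e.\ preserved by $(f,g)^*$, which Jacobs' definition requires. That check is not automatic here: $(f,g)^*$ is a right adjoint (to $\coprod_{f,g}$) and no $\prod_{f,g}$ is assumed, so it need not preserve joins. Your reduction $(f,g)^*\coprod_{\nabla,\nabla}(m+n)\cong\coprod_{\nabla,\nabla}(f+f,g+g)^*(m+n)$ via the Beck--Chevalley hypothesis closes that gap. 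It is followed by $(f+f,g+g)^*(m+n)\cong(f,g)^*m+(f,g)^*n$ from pseudo-functoriality of $+$; your ``$f^*m+g^*n$'' should read this way. This also gives the Beck--Chevalley hypothesis a visible role.

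Some repairs are needed. First, do not justify the pullback by disjointness or universality of coproducts. Distributivity does not imply extensivity; a nontrivial distributive lattice is a counterexample. The square is a pullback for a simpler reason: $\gamma_{x,1,1}\colon(x\times1)+(x\times1)\to x\times(1+1)$ satisfies $\pi_1\circ\gamma_{x,1,1}=\nabla_x$ and $\gamma_{x,1,1}\circ(f+f)=(f\times1_{1+1})\circ\gamma_{a,1,1}$. So your square is isomorphic to the pullback of the projection $\pi_1\colon x\times(1+1)\to x$ along $f$.

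For $\bot$, use $\emptyset\cong x\times\emptyset$ in the same way. The paper only posits the binary $\gamma$, but the nullary case follows. Indeed $(1_x\times\nabla_\emptyset)\circ\gamma_{x,\emptyset,\emptyset}=\nabla_{x\times\emptyset}$ is then invertible, which forces any two maps out of $x\times\emptyset$ to agree, and $x\times\emptyset\to\emptyset$ supplies one.

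Second, your fallback is not independent. Writing $f_!\odot(m\vee n)=f_!\odot\nabla_x^*\odot(m+n)\odot\nabla_!$, the needed step $f_!\odot\nabla_x^*\cong\nabla_a^*\odot(f+f)_!$ does not follow from functoriality of $+$. It is exactly internal Beck--Chevalley for the same square.

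Finally, $m\wedge\bot=\bot$ needs no Frobenius argument. In a poset $m\wedge\bot\le\bot$ by projection, and $\bot$ is least.
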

\begin{proof}
  This is immediate from \cref{theorem:local-distributivity} and the definition 
  of a coherent fibration.
\end{proof}

\section{Modal FOL \& Description Logic}
\label{section:FOL-DescriptionLogic}

Combining the structures encountered so far we arrive at the richest 
double-categorical forum for data manipulation and interpretation of logic. This 
is a double olog with dependent products, comprehension and distributive 
coproducts. Such a structure is rich enough to interpret first-order predicate 
logic when viewed as an equipment and satisfies several canonical modal axioms.

\begin{defi}
  A \textbf{FOML double olog} is a double olog with dependent products, strong 
  tabulators, and cocartesian products satisfying global distributivity.
\end{defi}

Since this is a rather heavy definition, we summarize the developed structures 
and properties, namely, that, a FOML double olog 
  \begin{enumerate}
    \item is a $\prod$-double olog: it has first-order quantification satisfying 
    Beck-Chevalley (\cref{def:prod-double-olog,lemma:beck-chevalley});
    \item has local products and equality satisfying Frobenius reciprocity 
    (\cref{prop:Frobenius-reciprocity});
    \item is locally cartesian closed (\cref{theo:locally-cartesian-closed});
    \item has local disjunction and negation (\cref{lemma:local-coproducts});
    \item is locally distributive (\cref{theorem:local-distributivity});
    \item and interprets modal necessity and possibility 
    (\cref{def:necessity-from-dep-products}).
  \end{enumerate}
This, as has been showcased throughout the development, is more than enough 
structure to handle classical relational algebra and the database queries 
developed in \cite{codd1972} as well as modal necessity (safety) and possiblity 
(liveness). Putting this in the context of fibrational semantics, we have the 
following culminating result.

\begin{cor} \label{corollary:first-order-fibration}
  A FOML double olog is a first-order fibration 
  \cite[Definition 4.2.1]{jacobs1999}.
\end{cor}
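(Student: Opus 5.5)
We check the clauses of Jacobs' Definition 4.2.1 for the source-target fibration $\langle\src,\tgt\rangle\colon\dbl D_1\to\dbl D_0\times\dbl D_0$ of a FOML double olog. A first-order fibration over a base with finite products is a preorder fibration with three properties. First, it must be a coherent fibration: fibred finite products and coproducts, with distributivity, stable under reindexing. Second, fibred implication must exist. Third, it needs simple (hence, with equality, all) coproducts and products along projections satisfying Beck-Chevalley, with Frobenius for the coproducts. The base $\dbl D_0\times\dbl D_0$ has finite products because $\dbl D$ is cartesian. The fibres $\dbl D(x,y)$ are posets because a double olog is locally posetal.

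The clauses follow from earlier results in this order.
\begin{enumerate}
\item \emph{Coherence.} The preceding corollary to \cref{theorem:local-distributivity} shows that a cocartesian `double category of relations' with global distributivity is a coherent fibration once coproducts satisfy Beck-Chevalley. The fibred finite products and coproducts are supplied by \cref{section:local-products-exponentiability} and \cref{lemma:local-coproducts}, and distributivity by \cref{theorem:local-distributivity}. The remaining hypothesis, Beck-Chevalley for coproducts, comes from \cref{lemma:beck-chevalley}, since strong tabulators are assumed.
\item \emph{Implication.} \cref{theo:locally-cartesian-closed} makes every fibre cartesian closed. Its corollary shows that reindexing preserves implication, so the fibration has fibred exponents.
\item \emph{Existential quantification.} Dependent coproducts $\coprod_{f,g}$ exist along every pair of tight arrows, in particular along product projections, via extension cells. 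They satisfy Beck-Chevalley by \cref{lemma:beck-chevalley} and Frobenius by \cref{prop:Frobenius-reciprocity}.
\item \emph{Universal quantification.} Dependent products $\prod_{f,g}$ exist by the definition of a $\prod$-double olog, and Beck-Chevalley for them is built into \cref{def:prod-double-olog}. Alternatively, it follows by the mate calculus from the coproduct case, as in the remark after that definition.
\item \emph{Equality.} Fibred equality is \cref{cor:equality}, so the quantifiers along arbitrary maps are available, not only along projections.
\end{enumerate}

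The one step that is not pure bookkeeping is stability of the local coproducts $\vee$ and $\bot$ under reindexing $(f,g)^*$, which coherence needs and no earlier result states explicitly. The plan is to write $(f,g)^*(m\vee n)=f_!\odot\nabla^*\odot(m+n)\odot\nabla_!\odot g^*$. We then commute $f_!$ past $\nabla^*$ using the internal Beck-Chevalley condition for the square formed by $f+f$ and $\nabla$. That square is a pullback because coproducts in $\dbl D_0$ are disjoint and universal, which should follow from global distributivity. This gives $\nabla^*\odot((f_!\odot m\odot g^*)+(f_!\odot n\odot g^*))\odot\nabla_!$, which is $f^*m\vee g^*n$ in the appropriate sense. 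We treat $\bot$ the same way, using that $\emptyset$ is strict initial.

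If verifying that this square is a pullback proves awkward, there is a fallback for posetal fibres. Since each fibre is cartesian closed, $-\wedge a$ preserves joins. We would then try to show that $(f,g)^*$ preserves joins because it has the right adjoint $\prod_{f,g}$, a left adjoint preserving all colimits that exist. That argument does apply here: $(f,g)^*$ is left adjoint to $\prod_{f,g}$, so it preserves finite joins outright. This makes reindexing stability of $\vee$ and $\bot$ immediate, and the anticipated obstacle dissolves. The remaining work is assembling the clauses, after which the corollary follows.
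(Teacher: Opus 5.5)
Your proposal is correct and follows the paper's route. Coherence comes from the corollary to \cref{theorem:local-distributivity}, with Beck-Chevalley for coproducts supplied by \cref{lemma:beck-chevalley}; fibred implication comes from \cref{theo:locally-cartesian-closed} and its corollary; and universal quantification comes from the dependent products, which is all the paper's two-line proof invokes.

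Your extra check that reindexing preserves $\vee$ and $\bot$ is sound via your fallback, since $(f,g)^*\dashv\prod_{f,g}$ makes reindexing preserve all existing joins. Rely on that rather than your first sketch: global distributivity by itself does not make coproducts in $\dbl D_0$ disjoint and universal, because distributive categories need not be extensive. So the claimed pullback square is not justified.
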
 
\begin{proof}
  Certianly the source-target projection is a coherent fibration, as observed 
  above. Local cartesian closure and all dependent products are all that is 
  additionally required.
\end{proof} 

As a consequence, any FOML double olog $\dbl D$, viewed as a fibration 
$\dbl D_1\to\dbl D_0\times\dbl D_0$, interprets first-order predicate logic in 
the manner described in \cite[\S 4.3]{jacobs1999}. This result is thus one 
realization of the general proposal and project of viewing structured equipments 
through the lense of fibrational semantics \cite{lambert2025}. More development 
on this point will be left to future work (see 
\cref{section:conclusion-compactness}). Now, with regard to the interaction 
between modal operators and propositional connectives we have the following. The 
second is the standard (K) axiom assumed of any sensible modal system.

\begin{prop}
  In any FOML double olog, the inequalities 
    \begin{enumerate}
      \item $\Box\phi\wedge\Box\psi\leq \Box(\phi\wedge\psi)$
      \item $\Box(\phi\Rightarrow\psi)\leq \Box\phi\Rightarrow\Box\psi$
    \end{enumerate}
  both hold.
\end{prop}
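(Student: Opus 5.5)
Both inequalities follow from the fact that $\Box=\Box_r=\prod_d c^*(-)$ (\cref{def:necessity-from-dep-products}) is a composite of two right adjoints, together with the fibrewise cartesian closed structure of \cref{theo:locally-cartesian-closed}. I work throughout in the locally posetal hom-categories $\dbl D(y,1)$, $\dbl D(Tr,1)$ and $\dbl D(x,1)$, so every cell reduces to an inequality.

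For (1), observe that $c^*$ is a right adjoint ($\coprod_c\dashv c^*$), and so is $\prod_d$ ($d^*\dashv\prod_d$). Right adjoints preserve all limits that exist, in particular binary products, which are the local products $\wedge$ of \cref{section:local-products-exponentiability}. Hence $\Box$ preserves $\wedge$, which gives even the equality $\Box\phi\wedge\Box\psi\cong\Box(\phi\wedge\psi)$. If one prefers an argument that avoids identifying local products with categorical products in each fibre, there is a direct route. Since $\phi\wedge\psi\leq\phi$, monotonicity gives $\Box(\phi\wedge\psi)\leq\Box\phi$, and likewise for $\psi$. Conversely, the counits give $d^*(\Box\phi)\leq c^*\phi$ and $d^*(\Box\psi)\leq c^*\psi$. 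Since restriction $d^*$ preserves $\wedge$ (by the restriction formula for local products together with pseudofunctoriality of restriction), we get $d^*(\Box\phi\wedge\Box\psi)\leq c^*\phi\wedge c^*\psi\cong c^*(\phi\wedge\psi)$. Transposing along $d^*\dashv\prod_d$ yields the claim.

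For (2), I would use the standard derivation of (K) from (1). Writing $\chi=\phi\Rightarrow\psi$, the evaluation inequality $\chi\wedge\phi\leq\psi$ holds in the cartesian closed fibre $\dbl D(y,1)$. Applying the monotone $\Box$ and using (1) gives
\begin{equation*}
  \Box\chi\wedge\Box\phi\leq\Box(\chi\wedge\phi)\leq\Box\psi .
\end{equation*}
Currying in the cartesian closed fibre $\dbl D(x,1)$, which exists by \cref{theo:locally-cartesian-closed}, then gives $\Box\chi\leq\Box\phi\Rightarrow\Box\psi$.

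The main obstacle is the bookkeeping in (1). One must make sure that $\wedge$ in the fibres $\dbl D(y,1)$, $\dbl D(Tr,1)$ and $\dbl D(x,1)$ really is the fibrewise product, so that the right adjoints $c^*$ and $\prod_d$ preserve it. I plan to settle this by citing \cite[Proposition 4.3.2]{aleiferi2018}, which identifies local products with products in hom-categories, together with the fact that restriction functors preserve local products. This is exactly the same-fibre-products observation already used in the corollary asserting local cartesian closure as a fibration. Everything else is formal adjunction calculus.
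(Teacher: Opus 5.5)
Your proposal is correct and follows essentially the same route as the paper. For (1) you use that $\Box_r=\prod_d c^*(-)$ is a right adjoint and so preserves local products; you spell this out as the composite of the right adjoints in $\coprod_c\dashv c^*$ and $d^*\dashv\prod_d$, so that its left adjoint is $\diamondsuit_{r^\dagger}$. For (2) you give the same derivation as the paper: evaluation $(\phi\Rightarrow\psi)\wedge\phi\leq\psi$, monotonicity of $\Box$, then (1), then currying in the cartesian closed hom-category.
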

\begin{proof}
  For the first, $\Box$ is a right adjoint in any case and thus preserves 
  products. Secondly, we have the derivation  
    \begin{prooftree}
      \AxiomC{$\phi\Rightarrow\psi \leq \phi\Rightarrow\psi$}
      \UnaryInfC{$\phi\Rightarrow\psi\wedge\phi \leq \psi$}
      \UnaryInfC{$\Box(\phi\Rightarrow \psi \wedge\phi) \leq \Box\psi$}
      \UnaryInfC{$\Box(\phi\Rightarrow \psi) \wedge\Box\phi \leq \Box\psi$}
      \UnaryInfC{$\Box(\phi\Rightarrow \psi) \leq \Box\phi\Rightarrow\Box\psi$}
    \end{prooftree}
  using cartesian closedness, product preservation, and the fact that $\Box$ is 
  a functor.
\end{proof}

Having developed the previous structures, we note in closing that FOML double 
ologs are rich enough to interpret description logic 
\cite{brachman2004,baader2010}. This is the basis of web ontology languages 
(OWL) used in the development of web3 and a form of knowledge representation. 
The standard definition of the logic and its set-theoretic semantics is the 
following.

\begin{defi}
    An \textbf{attributive concept language} (ACL) is given by a signature 
    $\mathcal A = (O,A,R)$, consisting of objects $O$, atomic concepts $A$ and 
    roles $R$, and a set of concepts, defined as the smallest set closed under 
    the following formation rules:
        \begin{enumerate}
            \item $\top$, $\bot$ are concepts;
            \item each atomic concept is a concept;
            \item if $C$ is a concept, so is $\neg C$;
            \item if $C$ and $D$ are concepts, so are $C\cap D$ and $C\cup D$;
            \item if $C$ is a concept and $R$ is a role, then $\forall R.C$ and $\exists R. C$ are concepts.
        \end{enumerate}
    An \textbf{interpretation} of an ACL consists of a set $X$ and a 
    correspondence $\llbracket-\rrbracket$ assigning 
        \begin{enumerate}
            \item a set element $\llbracket a\rrbracket \in X$ for each object $a\in O$;
            \item to each concept $C$ a subset $\llbracket C\rrbracket\subset X$;
            \item to each role $R$ a binary relation $\llbracket R\rrbracket \subset X\times X$;
        \end{enumerate}
    in such a way that  
        \begin{enumerate}
            \item $\llbracket C\cap D\rrbracket = \llbracket C\rrbracket \cap \llbracket D\rrbracket$
            \item $\llbracket C\cup D\rrbracket = \llbracket C\rrbracket \cup \llbracket D\rrbracket$
            \item $\llbracket \neg C \rrbracket = X\setminus \llbracket C\rrbracket$
            \item $\llbracket\exists R.C\rrbracket = \lbrace x\in X\mid \text{there is } y\in X \text{ such that } (x,y)\in R \text{ and } y\in C\rbrace$ 
            \item $\llbracket\forall R.C\rrbracket = \lbrace x\in X\mid \text{for all } y\in X \text{ if } (x,y)\in R \text{ then } y\in C\rbrace$. 
        \end{enumerate}
  Denote such an interpretation by $\mathcal A \to \Rel$.
\end{defi}

The notation is chosen because, technically, we are using the double-category 
data of relations without also using the full double-structure. This of course 
suggests a more general interpretation in any suitably structured `double 
category of relations'. Note that the interpretation of quantifiers is exactly 
the modal statements relative to the fixed subset/proposition summarized in 
\cref{fig:quant-queries}.

\begin{defi} \label{definition:interpret-description-logic}
  An \textbf{interpretation} of an ACL in an FOML double olog $\dbl D$ consists 
  of an object $x$ and a correspondence $\llbracket-\rrbracket$ assigning 
    \begin{enumerate}
        \item a term $\llbracket a\rrbracket \colon u_a\to x$ for each object $a\in O$;
        \item to each concept $C$ a monic $\llbracket C\rrbracket\rightarrowtail x$;
        \item to each role $R$ a proarrow $\llbracket R\rrbracket \colon x\proto x$;
    \end{enumerate}
  in such a way that  
    \begin{enumerate}
        \item $\llbracket C\cap D\rrbracket = \llbracket C\rrbracket \wedge \llbracket D\rrbracket$
        \item $\llbracket C\cup D\rrbracket = \llbracket C\rrbracket \vee \llbracket D\rrbracket$
        \item $\llbracket \neg C \rrbracket = \llbracket C\rrbracket\Rightarrow \bot$
        \item $\llbracket\exists R.C\rrbracket = \diamondsuit_R\llbracket C\rrbracket$ 
        \item $\llbracket\forall R.C\rrbracket = \Box_R\llbracket C\rrbracket$.
    \end{enumerate}
  Denote such an interpretation by $\mathcal A \to \dbl D$.
\end{defi}

\begin{rem}
  Note that the modal operators make the interpretation of the quantified concepts over the roles very straightforward. And again this situation of having a relation and a proposition/concept on either the source or target of the relation is exactly our Inventory olog together with the shopping list that began the paper in \cref{section-ologs}.
\end{rem}

\section{Conclusion \& Prospectus: The Role of Compactness}
\label{section:conclusion-compactness}

It has been evident to the author at least since the work on `double categories 
of relations' that a missing account of \emph{loose compactness} is an profound 
bottleneck on further development of the present theory (see the discussion of 
\cite[\S 12]{lambert2022}). In fact, this concern goes back to a first exposure 
to Yoneda structures in \cite{weber2007} where a form of compactness is an 
essential ingredient in phrasing what it means to be a \emph{2-topos}. Owing to 
the need of recasting this derived loose structure with a first-class status, 
the author has had in mind a prospective synthetic axiomatization of profunctors 
as a double category leading to a notion of a \emph{double topos}. Loose 
compactness would thus naturally be a central feature. Developments meant to 
supply this lack in the double-categorical literature have since been initiated 
in related work \cite{patterson2024a,patterson2024b}. Especially the recent 
fruitful collaboration on \emph{twisted double functors} \cite{lambert2026} is 
meant to supply a precise framework for a missing notion of a \emph{twisted 
adjunction} and a \emph{loose Yoneda theory} (orthogonal to that of 
\cite{pare2011}) in which a general account of loose compactness can be phrased. 
The point of this concluding section is to discuss the necessity of such a 
framework and the horizons its successful articulation would open. 

The necessity of a general account of double-categorical loose compactness is 
implicit in the original \cite{carboni1987} and glaring in \cite{lambert2022}. 
`Bicategories of relations' and the more general \emph{cartesian bicategories} 
of the former reference are an alternative to \emph{allegories} \cite{freyd1990} 
presenting an axiomatization of the \emph{loose bicategorical} structure of 
relational systems. As powerful as the framework is, however, there are two 
subtleties that led to the the work of \cite{lambert2022}. The first is the fact 
that the purely bicategorical approach, in an ironic twist relative to purely 
functional ologs, leaves out genuinely functional arrows as first-class 
entities, rather encoding them as certain special relations, namely, the 
\emph{maps}. As we have already noted, this makes naming, subtyping, and 
equational reasoning unnecessarily difficult; and the introduction of double 
categories elegantly solves this problem. A more fundamental issue with the 
framing of the approach, however, is that a `bicategory of relations' is asked 
to be \emph{locally posetal}. This indeed has the effect of simplifying many 
calculations (the derivation of the modular laws and compactness), but also more 
fundamentally of essentially reorienting the whole framework to that of a 
\emph{monoidal 2-category} since the bicategorical coherence conditions as a 
result all hold on the nose.

Now, from the standpoint of double category theory, this assumption thus has the 
effect of transposing the entire theory from the loose bicategorical direction 
to the tight 2-categorical direction. Of course, in essence, the entire point of 
double categories is to keep these as largely independent but related orthogonal 
directions axiomatizing first-class entities. This is why the equipment 
structure is necessary in \cite{lambert2022} to show that the loose bicategory 
of a `double category of relations' is a `bicategory of relations'. For the 
axioms governing diagonals in the cartesian structure \cite{aleiferi2018} are 
posed in the \emph{tight direction} and thus have to be transposed via the 
equipment structure to the loose bicategory. The happy circumstance is that the 
right adjoints are provided by conjoints; in other words, this generalization is 
impossible without the automatic symmetry of the full equipment structure. This 
is not to say that the definition of a `bicategory of relations' is the wrong 
one; it is just that it is not really about bicategories. The proper 
generalization should actually be closer to \emph{the underlying monoidal 
2-category of a cartesian double category (with suitable further structure) is a 
`bicategory of relations'}. This explains the considerable effort involved in 
the follow up paper \cite{carboni2008} which removes the assumption of 
\emph{locally posetal}. Essentially, in summary, this follow-up paper 
inadvertantly conflates categorification and transposition. The work on doing 
this transposition carefully has appeared only recently in \cite{patterson2026}.

Now, the present work follows \cite{lambert2025} in treating `double categories 
of relations' as a basic structural framework for double ologs. This of course 
carries the assumption again that each such olog is locally posetal. This means 
that from the fibrational perspective, our double ologs are models of a 
\emph{logic over a type theory} rather than of a \emph{depedent type theory} 
where there may be more than one morphism between given proarrows. The 
development of the present work shows that the operative structures in 
double-olog-ing and querying are really minimally (1) (co)cartesian structure, 
(2) equipment structure with Beck-Chevalley and for maximal expressivity also 
(3) dependent products, (4) strong tabulators, and the derived rules of 
modularity, Frobenius reciprocity and compactness. Now, again, these derived 
rules are consequences of the `double category of relations' structure and 
certainly depend for their proofs and form on discreteness and the locally 
posetal assumption. Now, discreteness is still of interest even if locally 
posetal is dropped. This is like moving from equivalence relations to general 
groupoids. In particular, the triviality of the duality involution is specific 
to relations and such structures. So, on the one hand, the locally posetal 
assumption is a computational convenience and a simplification making a 
direction connection between databases and logics over type theories (extending 
the reflections and program of \cite[\S 2.5]{Spivak2010}). On the other hand, it 
seems like an artificial limitation on the databases that can be modeled, and on 
the full connection with dependent type theory that is lurking in the 
background, inasmuch as the essential features can just be axiomatized: a 
distributive (co)cartesian equipment with strong tabulators and some suitably 
weakened versions of Frobenius reciprocity and of Beck-Chevalley for dependent 
(co)products. 

Whether compactness is axiomatized directly or derived in this more general 
framework, it presents a problem in that it is not really known what to look for 
without the locally posetal assumption. In particular, the general case should 
allow a non-trivial duality involution modeled by \emph{opposite} in 
profunctors, or perhaps by \emph{reverse path} in directed homotopy theory 
\cite{grandis2003,grandis2002directed2,grandis2009directedBook}. And, in 
essence, although it has been studied for 2-categories 
\cite{shulman2018contravariance}, there is no precise \emph{general theory} 
recovering oppositization and its interaction with monoidal products in the 
loose direction for double categories. This, again, on our view, requires the 
framework of loose adjunctions and a loose Yoneda theory stemming from the 
recent machinery of twisted double functors and in particular the twisted hom 
functor \cite{lambert2026}. Points of inspiration for such a theory are of 
course the ordinary 1-categorical accounts of *-autonomy 
\cite{barr1999autonomous} and compact closedness \cite{kelly1980coherence} but 
also the bicategorical approaches to duality and involutions in 
\cite{daystreet1997,shulman2018contravariance}.

Owing to the locally posetal assumption and the resulting triviality of the 
accompanying duality involution, compactness in the case of `double categories 
of relations' seems an apparent triviality. And indeed it could be observed that 
the bookkeeping problem of remembering which objects are on the left or right 
side of a relation is merely a nuisance that \emph{ought} to be remedied by 
something like compactness. It is maintained here, however, that this view is 
wrong, or at least short-sighted, on at least two counts. The first is of course 
already apparent in the nature of double ologs. Typically, for example, we 
naively think of a relation such as \emph{parent of} as pointing from parents to 
children, and not the other way around, much less from, or to, some product of 
the two types. Likewise, as a technical matter, relational division in the 
original \cite{codd1972} is actually framed for two tables representing 
relations of essentially arbitrary arity. Compactness is in the background 
grouping by moving columns from one side to the other to make sure the division 
columns match without incorporating those not involved in the query. Compactness 
plays non-trivially in both cases; in a salutory way in the latter by making 
sure the division works properly; but in an artificial way in the former 
inasmuch as it collapses intended semantic distinctions. In either case, 
compactness is doing non-trivial work and needs to be better understood.

Now, the second count is of course that this proposed view refuses to see this 
simpler case as a testing ground for the generalized theory of compactness with 
non-trivial duality as discussed above. This general account reveals an endemic 
\emph{sidedness} in that the example of profunctors is presented with opposite 
on one or the other side. This is necessary also for Lawvere metric spaces 
\cite{lawvere2002}, due to lack of symmetry, and for order ideals 
\cite[\S 3.4.6]{grandis2019} and other enriched profunctors. Now, our convention 
is that profunctors are typed with opposite on the left. This preserves the 
typing of the hom functors in diagrammatic order. Thus, profunctors from the 
terminal (for us anyway) are essentially copresheaves and profunctors to the 
terminal are presheaves. Now, the former are algebraic objects and the latter 
are geometric. We liken the former to \emph{states} and the latter to 
\emph{effects} or \emph{generalized propositions} as in CQM 
\cite{AbramskyCoecke2004,Selinger2007,HeunenVicary2019}. Or again the former to 
points or certain spaces and the latter to measurements or quantities; in this 
sense, the adjoint situation of \emph{Isbell conjugacy} 
\cite[\S 7]{Lawvere2005Reprint} applied to profunctors is already a high-level 
statement of space-quantity duality and thus also state-effect duality. In this 
way, we see from a very broad philosophical perspective that although 
compactness with trivial duality seems to conflate states and effects, it really 
ought not to in a general account. And arguably this ought not to be thought of 
as a collapse even in the trivial case. For in the example of matrices, states 
are column vectors and effects are row vectors. If these are set-indexed, again 
we have only a trivial duality involution, and although row and column vectors 
are related by transposition, surely no one would simply conflate them as 
\emph{the same} or argue that it does not matter how they are arranged. It seems 
to us worthwhile, then, even in the trivial case, to preserve a semantic 
distinction even if the bijections say we can move freely between them without 
dualizing objects/types explicitly.

Moreover, this two-sidedness of \emph{split contexts} for relational or 
propositional typing in general double ologs is consistent with a broader view 
of the semantics that especially enriched profunctors already do to provide. For 
example, the connection between *-autonomy, enriched structures and models of 
linear logic is well-established \cite{seely1989,barr1991,day2004}. Likewise, 
profunctors have explicitly and directly been studied as models of linear logic 
\cite{dunn2015}. \emph{Chu spaces} \cite[Appendix]{barr1979} naturally organize 
into a *-autonomous category. These all have such split contexts; in paricular, 
we view each Chu space as a scalar-valued profunctor-like morphism from the 
product of an underlying space and its frame of opens. That is, in summary, 
profunctor-type models of linear logic already incorporate some kind of 
non-trivial duality and an inherent sidedness in variable contexts. Our view is 
that the fact that the requisite care in the treatment of split contexts 
appearing as an inevitability in a generalization of locally posetal ologs is 
not a coincidence. For these locally posetal ologs are local discretizations of 
more general cartesian equipment-type structures that are generally 
profunctor-type double categories. And the references above suggest the types of 
logics and type theories these structures are meant to interpret.

From a type-theoretic perspective, the generalization is represented as a move 
from logics over type theories to some \emph{dependent type theory} where 
proarrows, formerly identified as relational propositions in a split context, 
are now identified as dependent types in a \emph{split context} whose two sides 
are mediated by a type-theoretic instantiation of compactness. Sidedness is the 
apparently novel feature. But the technical point is to axiomatize and provide 
double-categorical semantics for something like \emph{opposite types} 
\cite{Zeilberger2009,AgudeloAgudelo2022,AgudeloAgudelo2022a,rivera2026} 
resulting in a \emph{directed dependent type theory with split contexts} as the 
internal language of such structures. And in fact oppositization has already 
been likened to a way to transition between \emph{polarities} from 
\emph{inductive} to \emph{coinductive} types, or from \emph{positive} to 
\emph{negative} types, where the former has rules for constructing terms and the 
latter has rules for deconstructing terms \cite{levy2004cbpv,levy2005adjunction,Zeilberger2009,LicataZeilbergerHarper2008,LicataHarper2009,ahman2016}. 
This is much like the \emph{prover-denier duality} of game semantics which 
indeed brings us full-circle back to Frobenius algebras and Frobenius pairs 
\cite{mellies2012,mellies2016} (i.e. essentially generalized discreteness in the 
language of \cite{carboni1987} except again with non-trivial duality). 
Naturally, it is no coincidence that Chu spaces have served as models of 
concurrency and game semantics \cite{pratt1995}. This is all to say that a 
successful elementary axiomatization of profunctors as a double topos (as 
prognosticated in our \cite[\S 12.3]{lambert2022}) has an immense payoff, 
naturally providing semantic models of fragments of linear logic over directed 
type theories and providing a common framework for discussion of data, programs, 
concurrency, game semantics, and ultimately, we believe, CQM, synthetic 
probability theory \cite{fritz2020} and the synthetic branching spacetime models 
discussed in \cite{lambert2024b}.

\paragraph{Acknowledgements.} Early versions of this work were presented at the 
NU Mathematics Colloquium and at FMCS 2026. Thanks are due to all the 
participants of both meetings and particularly to Marcello LanFranchi, Dorette 
Pronk, and Peter Selinger for several provocative questions. Special thanks to 
Evan Patterson for reviewing an early draft of the manuscript; and to Darien 
DeWolf for the invitation to speak at FMCS 2026 which prompted much of the work 
in this paper.

\bibliographystyle{alphaurl}   
\bibliography{double-data-bib} 

\end{document}